\definecolor{rltred}{rgb}{0.75,0,0}
\definecolor{rltgreen}{rgb}{0,0.5,0}
\definecolor{rltblue}{rgb}{0,0,0.75}
\newtheorem{theorem}[equation]{Theorem}
\newtheorem{lemma}[equation]{Lemma}
\newtheorem{proposition}[equation]{Proposition}
\newtheorem{assumption}[equation]{Assumption}
\newtheorem{remark}[equation]{Remark}
\numberwithin{equation}{section}
\newcommand{\meantmp}[2]{#1\langle{#2}#1\rangle}
\newcommand{\mean}[1]{\meantmp{}{#1}}
\newcommand{\param}{\lambda}
\newcommand{\para}{{\delta}}
 \newcommand{\td}{\partial_{\tau}}
\newcommand{\bue}{\bu_{\epsilon}}
\newcommand{\buen}{\bu_{\epsilon_n}}
\newcommand{\letter}{\kappa}
\newcommand{\dx}{{\rm d\bx}}
\newcommand{\bfuMh}{\mbox{\bf u}_h^m}
\newcommand{\bfvM}{\mbox{\bf v}_h}
\newcommand{\Int}{\dashint\limits_{\!\!I_m}}
\newcommand{\ksumo}{{\kappa\sum_{m=1}^M}}
\newcommand{\trap}[1]{{#1}_{\tau}}
\newcommand{\difp}[1]{d^+{#1}}
\newcommand{\difn}[1]{d^-{#1}}
\newcommand{\intO}{\int\limits_{\Omega}}
\newcommand{\otimess}{\overset{s}{\otimes}}
\newcommand{\T}{\mathbf{S}}
\DeclareMathOperator{\spt}{spt}
\newcommand{\dist}{\operatorname{dist}}
\newcommand{\tran}[1]{{#1}_{-\tau}}
\begin{document}
\title[Space-time nonlinear parabolic systems]
{Space-time discretization for nonlinear parabolic systems with $p$-structure}
\author{Luigi C.\ Berselli}
\address{Dipartimento di Matematica, Universit{\`a} di Pisa, Via F.~Buonarroti 1/c,
  I-56127 Pisa, ITALY.}  \email{luigi.carlo.berselli@unipi.it}
%
%
\author{Michael R\r u\v zi\v cka{}} 
\address{Institute of Applied Mathematics, Albert-Ludwigs-University Freiburg,
  Ernst-Zermelo-Str.~1, D-79104 Freiburg, GERMANY.}  \email{ rose@mathematik.uni-freiburg.de}
\begin{abstract}
  In this paper we consider nonlinear parabolic systems with elliptic part which can be
  also degenerate. We prove optimal error estimates for smooth enough solutions. The
  main novelty, with respect to previous results, is that we obtain the estimates directly
  without introducing intermediate semi-discrete problems. In addition, we prove the
  existence of solutions of the continuous problem with the requested regularity, if the
  data of the problem are smooth enough.
\end{abstract}
\keywords{Nonlinear parabolic systems, error analysis, regularity.}
\subjclass[2010]{Primary 65M60, Secondary 65M15, 76D03, 35B65} 
\date{\small \today}
\maketitle
\section{Introduction}
In this paper we study the (full) space-time discretization of a parabolic problem with
Dirichlet boundary conditions. Our method differs from most previous investigations in
as much as we use no intermediate problems to prove an optimal error estimate. This result
is achieved under certain natural regularity assumptions of the solution of the continuous
problem. Moreover, we also prove this required regularity for the solution of the singular
problem for large data, in the case of Dirichlet boundary conditions. We restrict
ourselves to the three-dimensional setting, however, all results carry over to the general
setting in $d$-dimensions.

More precisely, we consider for a sufficiently smooth bounded domain
$\Omega \subset \setR^3$ and a finite time interval $I:=(0,T)$, for
some given $T>0$, the parabolic system
\begin{equation}
  \label{eq:pfluid}\tag{$\text{parabolic}_p$}
  \begin{aligned}
    \frac {\partial\bfu}{\partial t} -\divo \bfS (\bfD\bfu) &= \bff\qquad&&\text{in }
    I\times \Omega,
    \\
    \bu &= \bfzero &&\text{on } I\times\partial \Omega\,,
    \\
    \bu(0)&=\bu_0&&\text{in }\Omega\,,
  \end{aligned}
\end{equation}
where the elliptic operator $\bS$ has $(p,\delta)$-structure and depends only on the
symmetric part of the gradient $\bD\bu$ of the vector-valued unknown $\bu:\Omega\to\setR^3$. Of
course, the whole theory also works with some simplifications if $\bS$ depends on the full
gradient $\nabla \bu$ and in an $d$-dimensional setting with $d\geq2$. The variational formulation
of~\eqref{eq:pfluid} is (for smooth enough solutions) the following
\begin{align}    
  \label{eq:cont-var}
  \begin{aligned}
    \Bighskp{\frac {\partial\bfu}{\partial t}(t)
    }{\bfv}+\hskp{\bS(\bD\bfu(t))}{\bD\bfv}&=\hskp{\bff(t)}{\bfv}
    && \forall\,\bfv\in V, \text{ a.e. } t \in I,
    \\
    \hskp{\bu(0)}{\bv}&=\hskp{\bu_0}{\bv}&& \forall\,\bfv\in V\,,
  \end{aligned}
\end{align}
where we will set, for reasons explained later, $V=(W^{1,p}_{0}(\Omega)\cap
L^{2}(\Omega))^{3}$.  We perform an error analysis for the fully implicit space-time
discretization
\begin{align}  
  \label{eq:fdiscr-var}
  \begin{aligned}
    (d_t \bfuMh,\bfvM)+\hskp{\bS(\bD\bfuMh)}{\bfv_h} &=(\bff(t_m),\bfvM) &&\forall\, \bfvM
    \in V_h ,\,\  m=1,\ldots,M\,,
    \\
    \hskp{\bu_h^0}{\bv_h}&=\hskp{\bu_0}{\bv_h}&& \forall\,\bfv_h\in V_h\,,
  \end{aligned}
\end{align}
where $d_t \bu^m:=\kappa^{-1} ({\bu^m-\bu^{m-1}})$ is the backward difference quotient
with $\kappa :=\frac TM$, $M\in \setN$ given, $t_m:=m\,\kappa$, and where $V_h\subset V$
is an appropriate finite element space with mesh size $h>0$. Precise definitions will be
given below.
\section{Notation and preliminaries}
\label{sec:preliminaries}
In this section we introduce the notation we will use. Moreover, we recall some
technical results which will be needed in the proof of the main convergence result.
\subsection{Function spaces}
We use $c, C$ to denote generic constants, which may change from line
to line, but are not
depending on the crucial quantities. Moreover we write $f\sim g$ if and only if there
exists constants $c,C>0$ such that $c\, f \le g\le C\, f$.

We will use the customary Lebesgue spaces $(L^p(\Omega), \norm{\,.\,}_p)$ and Sobolev
spaces $(W^{k,p}(\Omega), \norm{\,.\,}_{k,p})$, $k \in \setN$. We do not distinguish
between scalar, vector-valued or tensor-valued function spaces in the notation if there is
no danger of confusion. However, we denote scalar functions by roman letters,
vector-valued functions by small boldfaced letters and tensor-valued functions by capital
boldfaced letters.  If the norms are considered on a set $M$ different from $\Omega$, this
is indicated in the respective norms as $\norm{\,.\,}_{p,M}, \norm{\,.\,}_{k,p,M}$. We
equip $W^{1,p}_0(\Omega)$ (based on the \Poincare{} Lemma) with the gradient norm
$\norm{\nabla \,.\,}_p$.  We denote by $\abs{M}$ the $3$-dimensional Lebesgue measure of a
measurable set $M$. The mean value of a locally integrable function $f$ over a measurable
set $M \subset \Omega$ is denoted by $\mean{f}_M:= \dashint_M f \, dx =\frac 1
{|M|}\int\limits_M f \, dx$. Moreover, we use the notation $({f},{g}):=\int\limits_\Omega
f g\, dx$, whenever the right-hand side is well defined.
\subsection{Basic properties of the elliptic operator} 
\label{sec:stress_tensor}
For a tensor $\bfP \in \setR^{3 \times 3} $ we denote its symmetric part by $\bP^\sym:=
\frac 12 (\bP +\bP^\top) \in \setR_\sym ^{3 \times 3}:= \set {\bfA \in \setR^{3 \times 3}
  \,|\, \bP =\bP^\top}$. The scalar product between two tensors $\bP, \bQ$ is denoted by
$\bP\cdot \bQ$, and we use the notation $\abs{\bP}^2=\bP \cdot \bP $. We assume that the
extra stress tensor $\bS$ has $(p,\para)$-structure, which will be defined now.  A
detailed discussion and full proofs of the following results can be found
in~\cite{die-ett,dr-nafsa}.
\begin{assumption}
  \label{ass:1}
  We assume that ${\bS\colon \setR^{3 \times 3} \to \setR^{3 \times 3}_\sym }$ belongs to
  $C^0(\setR^{3 \times 3},\setR^{3 \times 3}_\sym )\cap C^1(\setR^{3 \times 3}\setminus
  \{\bfzero\}, \setR^{3 \times 3}_\sym ) $, satisfies ${\bS(\bP) = \bS\big (\bP^\sym \big
    )}$, and $\bS(\mathbf 0)=\mathbf 0$. Moreover, we assume that $\bS$ has {\rm
    $(p,\para)$-structure}, i.e., there exist $p \in (1, \infty)$, $\para\in [0,\infty)$,
  and constants $C_0, C_1 >0$ such that
   \begin{subequations}
     \label{eq:ass_S}
     \begin{align}
       \sum\nolimits_{i,j,k,l=1}^3 \partial_{kl} S_{ij} (\bP) Q_{ij}Q_{kl} &\ge C_0 \big
       (\para +|\bP^\sym|\big )^{{p-2}} |\bQ^\sym |^2,\label{1.4b}
       \\
       \big |\partial_{kl} S_{ij}({\bP})\big | &\le C_1 \big (\para +|\bP^\sym|\big
       )^{{p-2}},\label{1.5b}
     \end{align}
   \end{subequations}
   are satisfied for all $\bP,\bQ \in \setR^{3\times 3} $ with $\bA^\sym \neq \bfzero$ and
   all $i,j,k,l=1,\ldots, 3$.  The constants $C_0$, $C_1$, and $p$ are called the {\em
     characteristics} of $\bfS$.
\end{assumption}
\begin{remark}
  {\rm 
    We would like to emphasize that, if not otherwise stated, the constants in the
    paper depend only on the characteristics of $\bfS$ but are independent of
    $\delta\geq0$.  
}
\end{remark}
Another important tool are {\rm shifted N-functions} $\set{\phi_a}_{a \ge 0}$,
cf.~\cite{die-ett,die-kreu,dr-nafsa}. Defining for $t\geq0 $ a special N-function $\phi$
by
\begin{align} 
  \label{eq:5} 
  \varphi(t):= \int\limits _0^t \varphi'(s)\, ds\qquad\text{with}\quad
  \varphi'(t) := (\delta +t)^{p-2} t\,,
\end{align}
we can replace $C_i \big (\para +|\bP^\sym|\big )^{{p-2}}$ in the right-hand side
of~\eqref{eq:ass_S} by $\widetilde C_i\,\varphi'' \big (|\bP^\sym|\big )$, $i=0,1$. Next,
the shifted functions are defined for $t\geq0$ by
\begin{align}
  \label{eq:phi_shifted}
  \varphi_a(t):= \int\limits _0^t \varphi_a'(s)\, ds\qquad\text{with }\quad
  \phi'_a(t):=\phi'(a+t)\frac {t}{a+t}.
\end{align}
Note that $\phi_a(t) \sim (\delta+a+t)^{p-2} t^2$ and
also $(\phi_a)^*(t) \sim ((\delta+a)^{p-1} + t)^{p'-2} t^2$, where the $*$-superscript
denotes the complementary function.  We will use also the Young inequality: for all
$\varepsilon >0$ there exists $c_\epsilon>0 $, such that for all $s,t,a\geq 0$ it holds
 \begin{align}
   \label{ineq:young}
   \begin{split}
     ts&\leq \epsilon \, \phi_a(t)+ c_\epsilon \,(\phi_a)^*(s)\,,
     \\
     t\, \phi_a'(s) + \phi_a'(t)\, s &\le \epsilon \, \phi_a(t)+ c_\epsilon \,\phi_a(s).
   \end{split}
 \end{align}

 Closely related to the extra stress tensor $\bS$ with $(p,\delta)$-structure is the
 function $\bF\colon\setR^{3 \times 3} \to \setR^{3 \times 3}_\sym$ defined through
\begin{align}
  \label{eq:def_F}
  \bF(\bP):= \big (\para+\abs{\bP^\sym} \big )^{\frac {p-2}{2}}{\bP^\sym } \,.
\end{align}
In the following lemma we recall several useful results, which will be frequently used in
the paper. The proofs of these results and more details can be found
in~\cite{die-ett,dr-nafsa,die-kreu,bdr-phi-stokes}.
\begin{proposition}
  \label{lem:hammer}
  Let $\bfS$ satisfy Assumption~\ref{ass:1}, let $\phi$ be defined
  in~\eqref{eq:5}, and let $\bfF$ be defined in~\eqref{eq:def_F}.
  \begin{itemize}
  \item [\rm (i)] For all $\bfP, \bfQ \in \setR^{3 \times 3}$ 
    \begin{align*}
        \big({\bfS}(\bfP) - {\bfS}(\bfQ)\big) \cdot \big(\bfP-\bfQ
        \big) &\sim \bigabs{ \bfF(\bfP) - \bfF(\bfQ)}^2,
        \\
        &\sim \phi_{\abs{\bfP^\sym}}(\abs{\bfP^\sym - \bfQ^\sym}),
        \\
        &\sim \phi''\big( \abs{\bfP^\sym} + \abs{\bfQ^\sym}
        \big)\abs{\bfP^\sym - \bfQ^\sym}^2,
      \\
      \bfS(\bfQ) \cdot \bfQ &\sim \abs{\bfF(\bfQ)}^2 \sim
        \phi(\abs{\bfQ^\sym}),
      \\
      \abs{\bfS(\bfP) - \bfS(\bfQ)} &\sim
        \phi'_{\abs{\bfP^\sym}}\big(\abs{\bfP^\sym - \bfQ^\sym}\big).
    \end{align*}
  The constants depend only on the characteristics of $\bfS$.
\item [\rm (ii)] For all $\epsilon>0$, there exist a constant $c_\epsilon>0$ (depending
  only on $\epsilon>0$ and on the characteristics of $\bfS$) such that for all $\bu,
  \bv,\bw \in W^{1,p}(\Omega)$ 
  \begin{align*}
    &\big ( {\bfS(\bD\bfu) - \bfS(\bD\bfv)},{\bD\bfw - \bD
      \bfv}\big )
      \leq \epsilon\, \norm{\bfF(\bD\bfu) - \bfF(\bD\bfv)}_2^2
      +c_\epsilon\,  \norm{\bfF(\bD\bfw) - \bfF(\bD\bfv)}_2^2\,,
    \\
      &\big ( {\bfS(\bD\bfu) - \bfS(\bD\bfv)},{\bD\bfw - \bD
      \bfv}\big )
      \leq \epsilon\, \norm{\bfF(\bD\bfw) - \bfF(\bD\bfv)}_2^2
      +c_\epsilon\,  \norm{\bfF(\bD\bfu) - \bfF(\bD\bfv)}_2^2\,,
\end{align*}
  and for all $\bfP,\bfQ\in\setR^{3 \times 3}_\sym$, $t\geq 0$
  \begin{align*}
    \phi_{\abs{\bfQ}}(t)&\leq c_\vep\, \phi_{\abs{\bfP}}(t)
    +\vep\, \abs{\bfF(\bfQ) - \bfF(\bfP)}^2,
    \\
    (\phi_{\abs{\bfQ}})^*(t)&\leq c_\vep\, (\phi_{\abs{\bfP}})^*(t)
    +\vep\, \abs{\bfF(\bfQ) - \bfF(\bfP)}^2\,.
  \end{align*}
  \item [\rm (iii)]   Let $\Omega$ be a bounded domain. Then, for all
  $\bfH \in L^p(\Omega)$ 
  \begin{align*}
    \int\limits_\Omega \abs{\bfF(\bfH) - \mean{\bfF(\bfH)}_\Omega}^2\, dx \sim  \int\limits_\Omega
    \abs{\bfF(\bfH) - \bfF(\mean{\bfH}_\Omega)}^2\, dx\,,
  \end{align*}
  where the constants depend only on~$p$.
  \end{itemize}
\end{proposition}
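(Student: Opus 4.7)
My plan is to handle the three items separately; each reduces either to a direct calculation or to a standard shifted N-function lemma worked out in detail in~\cite{die-ett,dr-nafsa,die-kreu,bdr-phi-stokes}.

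For part (i) I would represent $\bS(\bP) - \bS(\bQ)$ as the segment integral
\begin{equation*}
\bS(\bP) - \bS(\bQ) = \int_0^1 \partial\bS\bigl(\bQ^\sym + \theta(\bP^\sym - \bQ^\sym)\bigr)\,d\theta \colon (\bP^\sym - \bQ^\sym),
\end{equation*}
which is legitimate because $\bS(\bP) = \bS(\bP^\sym)$, then contract against $\bP - \bQ$ and apply the two-sided bounds of Assumption~\ref{ass:1} to reduce matters to the scalar integral $\int_0^1 (\delta + |\bQ^\sym + \theta(\bP^\sym - \bQ^\sym)|)^{p-2}\,d\theta$. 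A standard elementary lemma gives this integral $\sim (\delta + |\bP^\sym| + |\bQ^\sym|)^{p-2}$, which together with $\phi''(t)\sim(\delta+t)^{p-2}$ and $\phi_a(t)\sim(\delta+a+t)^{p-2}t^2$ (noted right after~\eqref{eq:phi_shifted}) delivers the first three equivalences at once. The identity for $\bS(\bQ)\cdot\bQ$ follows by specialising to $\bP=\mathbf 0$ and using $\bS(\mathbf 0)=\mathbf 0$, the formula for $|\bS(\bP) - \bS(\bQ)|$ comes from the same segment representation combined with $\phi'_a(t)\sim(\delta+a+t)^{p-2}t$, and the equivalence with $|\bF(\bP) - \bF(\bQ)|^2$ is obtained by repeating the whole procedure with $\bF$ in place of $\bS$, using that $\partial\bF$ admits the same type of two-sided pointwise bounds as $\partial\bS$.

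For part (ii) I would start from the pointwise bound $|\bS(\bD\bu) - \bS(\bD\bv)| \sim \phi'_{|\bD\bv^\sym|}(|\bD\bu^\sym - \bD\bv^\sym|)$ furnished by (i), combine it with Cauchy--Schwarz, and then apply the shifted Young inequality~\eqref{ineq:young} with shift $a = |\bD\bv^\sym|$; which factor one places into $\phi_a$ and which into $(\phi_a)^*$ produces either of the two variants, because $\phi_a(|\bD\bw^\sym - \bD\bv^\sym|)\sim|\bF(\bD\bw) - \bF(\bD\bv)|^2$ by (i). The last two pointwise inequalities relating $\phi_{|\bQ|}$ and $\phi_{|\bP|}$ are the ``shift-change'' estimates of Diening--Ettwein, which I would derive by direct pointwise comparison of the weights $(\delta+|\bP|+t)^{p-2}$ and $(\delta+|\bQ|+t)^{p-2}$ and a Young-type absorption of the mismatch $||\bP|-|\bQ||$ into $|\bF(\bP) - \bF(\bQ)|^2$.

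For part (iii) the $\le$ direction is free, since $\mean{\bF(\bH)}_\Omega$ minimises $\bA\mapsto\int_\Omega|\bF(\bH)-\bA|^2\,dx$ over constants $\bA$ and hence is beaten by the competing constant $\bF(\mean{\bH}_\Omega)$. The reverse direction follows from the triangle bound
\begin{equation*}
\int_\Omega |\bF(\bH) - \bF(\mean{\bH}_\Omega)|^2\,dx \le 2\int_\Omega |\bF(\bH) - \mean{\bF(\bH)}_\Omega|^2\,dx + 2|\Omega|\,|\mean{\bF(\bH)}_\Omega - \bF(\mean{\bH}_\Omega)|^2,
\end{equation*}
combined with a Jensen-type estimate $|\mean{\bF(\bH)}_\Omega - \bF(\mean{\bH}_\Omega)|^2 \le c\,\dashint_\Omega |\bF(\bH) - \bF(\mean{\bH}_\Omega)|^2\,dx$ that exploits the convexity of the shifted N-function through the equivalence in (i), followed by a standard absorption. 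I expect the most delicate step throughout to be the shift-change lemma required in part (ii), as a uniform control of $\phi_{|\bQ|}(t)$ by $\phi_{|\bP|}(t)$ (valid for all $p\in(1,\infty)$ and all $\delta\ge 0$) forces a careful case split between the regimes $p<2$ and $p\ge 2$; every other step is either a direct computation from Assumption~\ref{ass:1} or a textbook application of the Young, Cauchy--Schwarz or Jensen inequalities.
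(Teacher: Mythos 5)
The paper does not actually prove Proposition~\ref{lem:hammer}; it only cites \cite{die-ett,dr-nafsa,die-kreu,bdr-phi-stokes}. So your proposal is a reconstruction of those standard arguments rather than an alternative to anything in this paper. Your sketches of (i) and (ii) follow the standard route and are sound: the segment-integral representation of $\bS(\bP)-\bS(\bQ)$, the elementary equivalence $\int_0^1 (\delta+|\bQ^\sym+\theta(\bP^\sym-\bQ^\sym)|)^{p-2}\,d\theta\sim(\delta+|\bP^\sym|+|\bQ^\sym|)^{p-2}$, the pointwise bound $|\bS(\bD\bu)-\bS(\bD\bv)|\sim\phi'_{|\bD\bv|}(|\bD\bu-\bD\bv|)$ together with the shifted Young inequality, and the shift-change lemma are exactly the tools used in the cited references; the two $\epsilon$/$c_\epsilon$ variants in (ii) do indeed follow because $\phi_a$ and $(\phi_a)^*$ both satisfy the $\Delta_2$-condition.

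Your argument for part (iii), however, has a genuine gap in the ``$\gtrsim$'' direction. You split by the triangle inequality into $2\int_\Omega|\bF(\bH)-\mean{\bF(\bH)}_\Omega|^2\,dx + 2|\Omega|\,|\mean{\bF(\bH)}_\Omega-\bF(\mean{\bH}_\Omega)|^2$ and propose to control the second term by $c\,\dashint_\Omega|\bF(\bH)-\bF(\mean{\bH}_\Omega)|^2\,dx$ via a ``Jensen-type estimate \ldots\ followed by a standard absorption.'' But absorption into the left-hand side would require $c<1/2$ (or, using the exact Pythagorean identity $\dashint_\Omega|\bF(\bH)-\bF(\mean{\bH}_\Omega)|^2\,dx=\dashint_\Omega|\bF(\bH)-\mean{\bF(\bH)}_\Omega|^2\,dx+|\mean{\bF(\bH)}_\Omega-\bF(\mean{\bH}_\Omega)|^2$, strictly $c<1$), whereas the Jensen/Cauchy--Schwarz argument delivers exactly $c=1$, and routing through the equivalences of (i) only degrades that constant further. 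The correct move is to bound the commutator by the \emph{mean-centered} integral directly, which removes the need for any absorption: set $\bA^*:=\bF^{-1}(\mean{\bF(\bH)}_\Omega)$ (well-defined since $\bF$ is a bijection of $\setR^{3\times 3}_\sym$). By (i), $|\mean{\bF(\bH)}_\Omega-\bF(\mean{\bH}_\Omega)|^2=|\bF(\bA^*)-\bF(\mean{\bH}_\Omega)|^2\sim\phi_{|\bA^*|}(|\bA^*-\mean{\bH}_\Omega|)$; since the mean is linear, $|\bA^*-\mean{\bH}_\Omega|\le\dashint_\Omega|\bH-\bA^*|\,dx$; Jensen for the convex function $\phi_{|\bA^*|}$ then gives $\phi_{|\bA^*|}(|\bA^*-\mean{\bH}_\Omega|)\le\dashint_\Omega\phi_{|\bA^*|}(|\bH-\bA^*|)\,dx\sim\dashint_\Omega|\bF(\bH)-\bF(\bA^*)|^2\,dx=\dashint_\Omega|\bF(\bH)-\mean{\bF(\bH)}_\Omega|^2\,dx$. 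Inserting this into the Pythagorean identity closes the argument with a constant depending only on $p$. This is presumably what you intended by ``exploiting the convexity of the shifted N-function,'' but the displayed inequality and the appeal to absorption do not deliver it as written.
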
 

There hold the following important equivalences, first proved in~\cite{SS00}. See
also~\cite[Proposition~2.4]{br-plasticity}.
\begin{proposition}
  \label{prop:equivalence}
  Assume that $\bS $ has $(p,\delta)$-structure. For $i=1,2,3$ and for
  sufficiently smooth symmetric tensor fields $\bQ$ we denote\footnote{Note that there is no
summation convention over the repeated Latin lower-case index $i$ in
$\partial_i\bS(\bQ)\cdot\partial_i\bQ$.} 
\begin{equation}
  \label{eq:P}
  \mathbb{P}_i(\bQ):=\partial_i\bS(\bQ)\cdot\partial_i\bQ =
  \sum_{k,l,m,n=1}^3\partial_{kl}S_{m n}(\bQ) \,\partial_i Q_{kl}\,\partial_i Q_{m n}\,.
\end{equation}
Then we have for all smooth enough
symmetric tensor fields $\bQ$ and all $i=1,2,3$
  \begin{align}
    &\mathbb{P}_i(\bQ)\sim\phi''(|\bQ|)|\partial_i
    \bQ|^2\sim|\partial_i \bF(\bQ)|^2\,,\label{eq:p-F}
    \\
    &\mathbb{P}_i(\bQ)\sim\frac{|\partial_i\bS(\bQ)|^2}{\phi''(|\bQ|)}\,,\label{eq:p-S}
\end{align}
where the constants only depend on the characteristics of $\bS$.
\end{proposition}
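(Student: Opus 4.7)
The plan is to establish the three equivalences pointwise in $x$, working at a point where $\bQ(x)\neq\bfzero$ (the remaining points can be handled by continuity/extension, and all three quantities vanish there simultaneously). Everything will follow from Assumption~\ref{ass:1} applied tensor-wise, the relation $(\delta+|\bP^\sym|)^{p-2}\sim \phi''(|\bP^\sym|)$, and a short Cauchy--Schwarz argument.

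First I would prove $\mathbb{P}_i(\bQ)\sim \phi''(|\bQ|)|\partial_i\bQ|^2$. Since $\bQ$ is symmetric, $\partial_i\bQ$ is symmetric as well, so $(\partial_i\bQ)^\sym=\partial_i\bQ$. Substituting $\bP:=\bQ(x)$ and the symmetric test tensor $\partial_i\bQ(x)$ into~\eqref{1.4b} gives the lower bound, while~\eqref{1.5b} combined with the Cauchy--Schwarz inequality in the sum over $k,l,m,n$ gives the matching upper bound. Replacing $(\delta+|\bQ|)^{p-2}$ by $\phi''(|\bQ|)$ via~\eqref{eq:5} yields~\eqref{eq:p-F} in the form $\mathbb{P}_i(\bQ)\sim\phi''(|\bQ|)|\partial_i\bQ|^2$. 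For the second part of~\eqref{eq:p-F}, I would compute $\partial_i\bF(\bQ)$ by the chain rule from the definition~\eqref{eq:def_F} of $\bF$; this reduces to the (well-known, cf.~\cite{die-ett,die-kreu,bdr-phi-stokes}) pointwise inequality that, for symmetric $\bA\neq\bfzero$ and any symmetric $\bB$,
\begin{equation*}
  \bigl|D\bF(\bA):\bB\bigr|^2\sim \phi''(|\bA|)\,|\bB|^2,
\end{equation*}
applied with $\bA=\bQ(x)$ and $\bB=\partial_i\bQ(x)$. I would invoke this standard fact rather than re-derive it.

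Finally, for~\eqref{eq:p-S} I would use what has already been proved. The upper bound $|\partial_i\bS(\bQ)|^2\le C\,\phi''(|\bQ|)^2|\partial_i\bQ|^2$ follows directly from~\eqref{1.5b} and Cauchy--Schwarz, hence
\begin{equation*}
  \frac{|\partial_i\bS(\bQ)|^2}{\phi''(|\bQ|)}\le C\,\phi''(|\bQ|)|\partial_i\bQ|^2\sim \mathbb{P}_i(\bQ).
\end{equation*}
For the reverse inequality I would combine the trivial Cauchy--Schwarz estimate $\mathbb{P}_i(\bQ)\le |\partial_i\bS(\bQ)||\partial_i\bQ|$ with the lower bound from~\eqref{eq:p-F}, namely $c\,\phi''(|\bQ|)|\partial_i\bQ|^2\le \mathbb{P}_i(\bQ)$, to deduce $\phi''(|\bQ|)|\partial_i\bQ|\le C\,|\partial_i\bS(\bQ)|$, then multiply by $|\partial_i\bQ|$ and use~\eqref{eq:p-F} once more to obtain $\mathbb{P}_i(\bQ)\le C\,|\partial_i\bS(\bQ)|^2/\phi''(|\bQ|)$.

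The only genuinely delicate step is the equivalence $|\partial_i\bF(\bQ)|^2\sim \phi''(|\bQ|)|\partial_i\bQ|^2$, because differentiating $\bF(\bP)=(\delta+|\bP^\sym|)^{(p-2)/2}\bP^\sym$ produces two terms (one from differentiating the prefactor, one from differentiating $\bP^\sym$), and the equivalence must be shown to hold uniformly in the directional tensor; however this is precisely the computation carried out in the references already cited, and no new idea beyond Assumption~\ref{ass:1} is required. Everything else is a direct application of the growth/coercivity conditions~\eqref{eq:ass_S} together with $(\delta+|\bQ|)^{p-2}\sim \phi''(|\bQ|)$.
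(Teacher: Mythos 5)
Your proposal is correct, and it is worth noting that the paper itself does not supply a proof of Proposition~\ref{prop:equivalence}: it only records the statement and cites~\cite{SS00} and~\cite[Proposition~2.4]{br-plasticity}. The argument you give --- substituting $\bP=\bQ(x)$ and the symmetric directional tensor $\partial_i\bQ(x)$ into~\eqref{eq:ass_S}, using $(\delta+|\bQ|)^{p-2}\sim\phi''(|\bQ|)$, and invoking the standard pointwise identity $|D\bF(\bA)\bB|^2\sim\phi''(|\bA^\sym|)|\bB^\sym|^2$ for the $\bF$-part --- is exactly the route those references take, so there is nothing to contrast.

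Two very minor remarks. First, the parenthetical claim that all three quantities ``vanish simultaneously'' at points with $\bQ(x)=\bfzero$ is not quite right when $\delta>0$ (there $\phi''(0)=(p-1)\delta^{p-2}>0$ and $\partial_i\bQ(x)$ need not vanish); this does not affect the proposition, since the same substitution into~\eqref{eq:ass_S} works there too, but the remark as stated is slightly off. Second, in the last step for~\eqref{eq:p-S}, after deducing $\phi''(|\bQ|)\,|\partial_i\bQ|\le C\,|\partial_i\bS(\bQ)|$, the cleanest route is to square this inequality and divide by $\phi''(|\bQ|)$, then apply~\eqref{eq:p-F}; as literally written, ``multiply by $|\partial_i\bQ|$ and use~\eqref{eq:p-F}'' only returns the Cauchy--Schwarz bound $\mathbb{P}_i(\bQ)\lesssim|\partial_i\bS(\bQ)|\,|\partial_i\bQ|$ and needs one further use of the same deduced inequality (in the form $|\partial_i\bQ|\lesssim|\partial_i\bS(\bQ)|/\phi''(|\bQ|)$) to reach the target. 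This is a bookkeeping slip, not a gap: the intermediate inequality you obtained already implies the claim.
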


\subsection{Discretizations}
For the time-discretization, given $T>0$ and $M\in \setN$, we define the time step size as
$\kappa:=T/M>0$, with the corresponding net $I^M:=\{t_m\}_{m=0}^M$, $t_m:=m\,\kappa$. We
use the notation $I_m:=(t_{m-1},t_m]$, with $m=1,\ldots,M$.  For a given sequence
$\{\bv^m\}_{m=0}^M$ we define the backward differences quotient as
\begin{equation*}
  d_t \bv^m:=\frac{\bv^m-\bv^{m-1}}{\kappa}.
\end{equation*}


For the spatial discretization we assume that $\Omega
\subset \setR^3$ is a polyhedral domain with Lipschitz continuous boundary.  Let
$\mathcal{T}_h$ denote a family of shape-regular triangulations, consisting of
$3$-dimensional simplices $K$. We denote by $h_K$ the diameter of $K$ and by $\rho_K$ the
supremum of the diameters of inscribed balls. We assume that $\mathcal{T}_{h}$ is
non-degenerate, i.e., $\max _{K \in \mathcal{T}_{h}} \frac {h_K}{\rho_K}\le \gamma_0$.  The
global mesh-size $h$ is defined by $h:=\max _{K \in \mathcal{T}_h}h_K$.  Let $S_K$ denote the
neighborhood of~$K$, i.e., $S_K$ is the union of all simplices of~$\mathcal{T}_{h}$
touching~$K$.  By the assumptions we obtain that $|S_K|\sim |K|$ and that the number of
patches $S_K$ to which a simplex belongs are both bounded uniformly in $h$ and $K$.


 We denote by ${\mathcal P}_k(\mathcal{T}_{h})$, with $k \in \setN_0:=\setN\cup\{0\}$,
the space of scalar or vector-valued functions, which are polynomials of degree
at most $k$ on each $K\in \mathcal{T}_{h}$.  Given a triangulation $\mathcal{T}_h$ of $\Omega$ with
the above properties and given $r_0 \le r_1 \in \setN_0$ we denote by $X_{h}$ the space
\begin{equation*}
  X_{h}:=\left\{\bv\in (C(\overline{\Omega}))^{3}\fdg \bv\in \mathcal{P}\right\},
\end{equation*}
with ${\mathcal
  P}_{r_0}(\mathcal{T}_h) \subset \mathcal{P} \subseteq {\mathcal P}_{r_1}(\mathcal{T}_h)$.
Note that there exists a constant $c=c(r_1, \gamma_0)$
such that for all $\bv_h \in X_h$, $K \in \mathcal{T}_h$, $j\in
\setN_0$, and all $x \in K$ holds
\begin{align}
  \label{eq:inverse}
  \abs{\nabla ^j \bv_h(x)}\le c\, \dashint_K \abs{\nabla ^j \bv_h(y)}\, dy\,.
\end{align}
For the weak formulation of the continuous and discrete problems we
will use the following function spaces
\begin{equation*}
V:=(W^{1,p}_{0}(\Omega)\cap
L^{2}(\Omega))^{3}\qquad\text{and}\qquad V_{h}:=V\cap X_{h}.
\end{equation*}

We also need some numerical interpolation operators.  Rather than
working with a specific interpolation operator we make the following
assumptions:
\begin{assumption}
  \label{ass:proj}
   We assume that $r_0=1$ 
  and that there exists a linear projection operator
  $P_h\colon (W^{1,1}(\Omega))^{3} \to X_h$ which 
  \begin{enumerate}
  \item is locally $W^{1,1}$-stable in the sense that
    \begin{align}
      \label{eq:Pidivcont}
      \dashint_K \abs{P_h\bfw}\,dx &\leq c \dashint_{S_K}\!
      \abs{\bfw}\,dx + c \dashint_{S_K}\! h_K \abs{\nabla \bfw}\,dx
      \qquad \forall\, \bfw \in (W^{1,1}(\Omega))^{3},\; \forall\, K \in \mathcal{T}_h;
    \end{align}
  \item preserves zero boundary values, i.e., $P_h\colon (W^{1,1}_{0}(\Omega))^{3}\to
    (W^{1,1}_{0}(\Omega ))^{3}\cap X_{h}$.
  \end{enumerate}
\end{assumption}

Note that, e.g., ~the Scott-Zhang operator (cf.~\cite{zhang-scott})
satisfies this assumption. The properties of interpolation operators
$P_h$ satisfying Assumption \ref{ass:proj} are discussed in detail
in~\cite[Sec.~4,5]{dr-interpol}, \cite[Sec.~3.2]{bdr-phi-stokes}. We
collect the for us relevant properties in the next proposition. 
\begin{proposition}\label{prop:Ph}
  Let $P_h$ satisfy Assumption~\ref{ass:proj}. 
  \begin{enumerate}
  \item [\rm (i)]   Let $\bF(\bD \bv) \in W^{1,2}(\Omega)$. Then there exists a constant
    $c=c(p,r_1,\gamma_0)$ such that 
  \begin{align*}
    \norm{\bF(\bD\bv) -\bF(\bD P_h\bv)}_2
    &\le c\, h\, \norm{\nabla \bF(\bD \bv ) }_2. 
  \end{align*}
  \item [\rm (ii)] Let $q\in[1,2)$ and $\ell =1$ or $\ell =2$ be such
    that $W^{\ell,q}(\Omega) \vnor \vnor L^2(\Omega)$. Then, there exists a constant
    $c=c(q,\ell,r_1,\gamma_0)$ such that for all $\bv \in
    W^{\ell,q}(\Omega)$ holds
    \begin{align*}
      \|\bv-P_h\bv\|_2\le c\,  h^{\ell +3( \frac 12 -\frac 1q)}\,\|\nabla^\ell \bv\|_q\,.
    \end{align*}
  \item [\rm (iii)] Let $\bF(\bD \bv) \in
    W^{1,2}(\Omega)$ and  $\bF(\bD \bw) \in
    L^{2}(\Omega)$. Then, there exists a constant
    $c=c(p,r_1,\gamma_0)$ such that 

\begin{align*}
  \begin{aligned}
    &\int\limits_\Omega \phi_{|\bD\bfv|}\big(\big|\bD P_h{\bv}
    -\bD P_h\bfw \big|\big)\,\dx
\leq c \,h^2\|\nabla\bF(\bD\bv)\|^2_2
    +c\,\|\bF(\bD\bv)-\bF(\bD\bw)\|^2_2 \,,
      \end{aligned}
    \end{align*}
where the constants depends only on $\gamma_0$ and $p$.
\end{enumerate}
\end{proposition}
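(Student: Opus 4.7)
The three estimates share a single local framework. On each simplex $K\in\mathcal{T}_h$ I would attach to $\bv$ a linear polynomial $\mathbf{q}_K$ on the patch $S_K$: take $\bD\mathbf{q}_K=\mean{\bD\bv}_{S_K}$ and fix the constant so that $\mean{\bv-\mathbf{q}_K}_{S_K}=\mathbf{0}$. Because $r_0=1$ and $P_h$ is a projection onto $X_h$, $P_h$ reproduces such linears, so $P_h\mathbf{q}_K=\mathbf{q}_K$ on~$K$. Combining the inverse estimate~\eqref{eq:inverse}, the local $W^{1,1}$-stability~\eqref{eq:Pidivcont}, and Poincaré--Wirtinger on $S_K$ then yields the pointwise key bound, for $x\in K$,
\begin{equation*}
|\bD P_h(\bv-\mathbf{q}_K)(x)|\le c\dashint_{S_K}\bigl|\bD\bv-\mean{\bD\bv}_{S_K}\bigr|\,dy.
\end{equation*}
Together with the pointwise equivalences of Proposition~\ref{lem:hammer}(i) and the mean-versus-$\bF$ oscillation identity of Proposition~\ref{lem:hammer}(iii), this is the engine of all three proofs.

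\textbf{Parts (i) and (ii).} For (ii), I would write $\bv-P_h\bv=(\bv-\mathbf{q}_K)-P_h(\bv-\mathbf{q}_K)$ on $K$, estimate both summands in $L^2(K)$ via the scaled embedding $W^{\ell,q}(S_K)\hookrightarrow L^2(S_K)$ (for $\ell=2$ one subtracts a linear polynomial of one higher degree first) together with~\eqref{eq:Pidivcont}, and finish by summing over $K$ with bounded overlap. For (i), Proposition~\ref{lem:hammer}(i) gives $|\bF(\bD\bv)-\bF(\bD P_h\bv)|^2\sim\phi_{|\bD\bv|}(|\bD\bv-\bD P_h\bv|)$, so it suffices to bound the right-hand side. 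The triangle split $|\bD\bv-\bD P_h\bv|\le|\bD\bv-\bD\mathbf{q}_K|+|\bD P_h(\bv-\mathbf{q}_K)|$, followed by the paragraph~1 pointwise bound, Jensen's inequality inside $\phi_{|\bD\bv|}$, and Proposition~\ref{lem:hammer}(iii), collapses each contribution to $\int_{S_K}|\bF(\bD\bv)-\mean{\bF(\bD\bv)}_{S_K}|^2\,dy\le c\,h_K^2\int_{S_K}|\nabla\bF(\bD\bv)|^2\,dy$ by the standard $L^2$-Poincaré inequality.

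\textbf{Part (iii) and the main obstacle.} This is the delicate step because only $\bF(\bD\bw)\in L^2$ is assumed, not $\bF(\bD\bw)\in W^{1,2}$, so part~(i) is not available for $\bw$. The plan is to add on each $K$ a second linear polynomial $\mathbf{l}_K$ with $\bD\mathbf{l}_K=\mean{\bD\bw}_{S_K}-\mean{\bD\bv}_{S_K}$ (constant chosen so that $\bw-\mathbf{q}_K-\mathbf{l}_K$ has zero mean on $S_K$). Since $P_h$ preserves linears,
\begin{equation*}
\bD P_h\bv-\bD P_h\bw=\bD P_h(\bv-\mathbf{q}_K)-\bD P_h(\bw-\mathbf{q}_K-\mathbf{l}_K)-\bD\mathbf{l}_K\qquad\text{on }K,
\end{equation*}
which decomposes the $\phi_{|\bD\bv|}$-integrand into three contributions. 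The first is handled exactly as in (i), giving $c\,h^2\|\nabla\bF(\bD\bv)\|_2^2$. For the other two, the change-of-shift inequality of Proposition~\ref{lem:hammer}(ii) lets me replace $\phi_{|\bD\bv|}$ by $\phi_{|\bD\bw|}$ up to an $\varepsilon|\bF(\bD\bv)-\bF(\bD\bw)|^2$ that is absorbed at the end. After this shift change, Jensen together with Proposition~\ref{lem:hammer}(i,iii) reduces both pieces to integrals of $|\bF(\bD\bw)-\mean{\bF(\bD\bw)}_{S_K}|^2$, which cannot be bounded by $h^2\|\nabla\bF(\bD\bw)\|_2^2$ under our hypothesis. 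The crucial trick, and the main technical obstacle, is then the triangle inequality
\begin{equation*}
\bigl|\bF(\bD\bw)-\mean{\bF(\bD\bw)}_{S_K}\bigr|\le\bigl|\bF(\bD\bw)-\bF(\bD\bv)\bigr|+\bigl|\bF(\bD\bv)-\mean{\bF(\bD\bv)}_{S_K}\bigr|+\bigl|\mean{\bF(\bD\bv)-\bF(\bD\bw)}_{S_K}\bigr|,
\end{equation*}
which, after summation over $K$ with bounded patch overlap, routes the unavailable regularity of $\bw$ either into the globally $L^2$ mismatch $\bF(\bD\bv)-\bF(\bD\bw)$ (producing the summand $c\|\bF(\bD\bv)-\bF(\bD\bw)\|_2^2$) or into Poincaré-small mean oscillations of $\bF(\bD\bv)$ (producing the summand $c\,h^2\|\nabla\bF(\bD\bv)\|_2^2$), yielding exactly the claimed estimate.
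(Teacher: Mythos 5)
Your overall architecture (subtract a patch-adapted linear on $S_K$, exploit $P_h\mathbf{q}_K=\mathbf{q}_K$, shift changes, Poincar\'e applied to $\bF(\bD\bv)$, bounded overlap) is the right ballpark, and part~(ii) as you outline it is essentially the Bramble--Hilbert argument that the paper develops in the Appendix (Lemmas~\ref{lem:lemmaA1}--\ref{lem:lemmaA4}, Proposition~\ref{prop:int-error}). But the ``pointwise key bound'' you declare to be the engine of parts~(i) and~(iii) does not follow from the stated hypotheses, and this gap propagates through the rest.

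The issue is the appearance of the \emph{symmetric} gradient on the right-hand side of
\begin{equation*}
|\bD P_h(\bv-\mathbf{q}_K)(x)|\le c\dashint_{S_K}\bigl|\bD\bv-\mean{\bD\bv}_{S_K}\bigr|\,dy .
\end{equation*}
A linear $\mathbf{q}_K$ with $\bD\mathbf{q}_K=\mean{\bD\bv}_{S_K}$ has a free skew part; choosing it so that $\nabla\mathbf{q}_K=\mean{\nabla\bv}_{S_K}$, what the inverse estimate~\eqref{eq:inverse}, the $W^{1,1}$-stability~\eqref{eq:Pidivcont}, and Poincar\'e--Wirtinger actually give is
\begin{equation*}
|\bD P_h(\bv-\mathbf{q}_K)(x)|\le |\nabla P_h(\bv-\mathbf{q}_K)(x)|\le c\dashint_{S_K}\bigl|\nabla\bv-\mean{\nabla\bv}_{S_K}\bigr|\,dy,
\end{equation*}
i.e.\ a bound in terms of the \emph{full} gradient. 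Converting that to the symmetric-gradient oscillation needs Korn's inequality, and in the $L^1$-average sense you are using it Korn's inequality \emph{fails} (Ornstein's non-inequality). You never invoke Korn, so there is no way to close this step. This is precisely why the paper's Lemma~\ref{lem:interpolation} does not work with pointwise $L^1$ bounds and Jensen: after shifting to the constant-shift modular $\phi_{|\mean{\bD\bv}_{S_K}|}$, it uses the $P_h$-stability \emph{in the shifted Orlicz space} to pass to $\int_{S_K}\phi_{|\mean{\bD\bv}_{S_K}|}(|\nabla\bv-\nabla\bw|)\,dx$, and then explicitly invokes a Korn inequality in Orlicz spaces (cf.~\cite[Thm.~6.13]{john}) to replace $\nabla$ by $\bD$. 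The non-degenerate (shifted) $\phi$-modular is exactly the setting where Korn is available, which is what makes the argument go through.

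A second, minor divergence worth noting even once the Korn step is repaired: for part~(iii) you change shift from $\phi_{|\bD\bv|}$ to $\phi_{|\bD\bw|}$ and then, because $\bF(\bD\bw)\notin W^{1,2}$, must route the oscillation of $\bF(\bD\bw)$ back through a triangle inequality against $\bF(\bD\bv)$. The paper instead shifts to the \emph{constant} shift $\phi_{|\mean{\bD\bv}_{S_K}|}$ on $S_K$ (paying the cost in $|\bF(\bD\bv)-\bF(\mean{\bD\bv}_{S_K})|^2$, which goes directly into $h^2\|\nabla\bF(\bD\bv)\|_2^2$ by Proposition~\ref{lem:hammer}(iii) and Poincar\'e), applies Orlicz stability and Korn there, and then shifts back, never passing through $\bF(\bD\bw)$-oscillations at all. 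That choice of shift target is what avoids any regularity demand on $\bw$ and makes the extra triangle-inequality detour unnecessary.
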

\begin{proof}
  The first assertion is proved e.g.~in~\cite[Cor.~5.8]{dr-interpol}. The second assertion
  is a generalization of the well known approximation property if on both sides there
  would be the same exponent $q$. Assertion (ii) will be proved in a more general context
  in the Appendix. Also assertion (iii), which is of more technical character, will be
  proved in the Appendix.
\end{proof}
\subsection{Main results}
%
Let us now formulate the main result, proving optimal convergence rates for the error
between the solution $\bu$ of the continuous problem~\eqref{eq:pfluid} and the discrete
solution $\{\bfuMh \}_{m=0}^{ M}$ of the space-time
discretization~\eqref{eq:fdiscr-var}. Observe that the existence and uniqueness of the
solution $\{\bfu^m_h \}_{m=0}^{ M}$ of the discrete problem~\eqref{eq:fdiscr-var} follows directly from
the assumptions on the operator. Moreover, testing
\eqref{eq:fdiscr-var} with $\{\bfu^m_h \}_{m=0}^{ M}$ yields the energy estimate
\begin{equation*}
  \max_{m=1,\dots,M}\|\bfu^m_h\|^2_2+\ksumo\|\bF(\bD\bfu^m_h)\|^2_2\leq C,
\end{equation*}
for some constant independent of $h,\kappa$.

\begin{theorem}
\label{thm:theorem-parabolic}
Let the tensor field $\bS$ in~\eqref{eq:pfluid} have $(p,\delta)$-structure for some
$p\in(1,2]$, and $\delta\in[0,\infty)$ fixed but arbitrary, and let $\Omega\subset\setR^3$
be a bounded polyhedral domain with Lipschitz continuous boundary. Assume that $\ff \in 
{W^{1,2}(I;L^2(\Omega))}$, $\bu_0 \in W^{1,3p/2}_0(\Omega)$ and that the solution $\bu$
of~\eqref{eq:pfluid} satisfies~\eqref{eq:cont-var} and
\begin{equation}\label{eq:reg-ass} 
  {\bF(\bD \bu)}\in W^{1,2}(I \times\Omega).
\end{equation}
Let the space $V_h$ be
defined as above with $r_0=1$ and let $\{\bfuMh\}_{m=0}^{ M}$ be solutions of~\eqref{eq:fdiscr-var}. Then there exists
$\kappa_0\in (0,1]$ such that for given $h\in ( 0, 1)$, $\kappa \in ( 0, \kappa_0)$, satisfying
\begin{equation}
  \label{eq:compatbility}
  h^{4/p'}\leq \sigma_0\, \kappa,
\end{equation}
for some $\sigma_{0}>0$, we have the following error estimate
\begin{equation*}
  \max_{m=1,\ldots,M}
  \|\bfuMh-\bfu(t_{m})\|_2^2+\ksumo\|\bF(\bD\bfuMh)-\bF(\bD\bfu(t_{m}))\|_2^2\leq
  c\,(h^2+\kappa^2),
\end{equation*}
where the constant $c$ depends only on the characteristics of $\bS$,
$\norm{\bF(\bD \bu)} _{W^{1,2}(I \times\Omega)}$,
$\norm{\partial _t\ff}_{L^2(I;L^2(\Omega)}$, $\norm{\bu_0}_{1,2}$, $\gamma_0$, $r_1$,
$\delta$, and $\sigma_0$.
\end{theorem}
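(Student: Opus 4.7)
The plan is to proceed by the classical decomposition technique, applied directly (without semi-discrete intermediaries, as emphasized in the abstract) and leveraging the monotone structure of $\bfS$. I set $\boldsymbol{\eta}^m := \bfu(t_m) - P_h\bfu(t_m)$ and $\bfemh := P_h\bfu(t_m) - \bfuMh \in V_h$. Evaluating~\eqref{eq:cont-var} at $t = t_m$, subtracting~\eqref{eq:fdiscr-var}, and testing with $\bfemh \in V_h$ will yield the error identity
\begin{equation*}
(d_t\bfemh,\bfemh) + \bigl(\bfS(\bfD\bfu(t_m))-\bfS(\bfD\bfuMh),\bfD\bfemh\bigr) = \bigl(d_t\bfu(t_m) - \partial_t\bfu(t_m),\bfemh\bigr) - (d_t\boldsymbol{\eta}^m,\bfemh).
\end{equation*}
On the left, the time-derivative term is rewritten as $\tfrac12 d_t\|\bfemh\|_2^2+\tfrac\kappa2\|d_t\bfemh\|_2^2$, producing the discrete $L^\infty_t L^2_x$-control together with a nonnegative dissipation.

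For the elliptic contribution, the plan is to split $\bfD\bfemh=\bfD(\bfu(t_m)-\bfuMh)-\bfD\boldsymbol{\eta}^m$. By Proposition~\ref{lem:hammer}(i) the first piece is equivalent to $\|\bfF(\bfD\bfu(t_m))-\bfF(\bfD\bfuMh)\|_2^2$ and supplies exactly the coercive quantity appearing in the statement. The second piece, an interpolation defect, will be absorbed via Proposition~\ref{lem:hammer}(ii) (an $\epsilon$-multiple of the coercive norm plus $\|\bfF(\bfD\bfu(t_m))-\bfF(\bfD P_h\bfu(t_m))\|_2^2$), and the remainder will be bounded by $ch^2\|\nabla\bfF(\bfD\bfu(t_m))\|_2^2$ through Proposition~\ref{prop:Ph}(i); after summation in $m$ this is finite by~\eqref{eq:reg-ass}.

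On the right-hand side there are two consistency errors. The mixed space/time term $d_t\boldsymbol{\eta}^m = \kappa^{-1}\int_{I_m}\partial_t(\bfu - P_h\bfu)\,ds$ will be handled by Cauchy-Schwarz and Proposition~\ref{prop:Ph}(ii) applied to $\partial_t\bfu$, producing an $O(h^2)$ contribution from the $\partial_t\bfF(\bfD\bfu)\in L^2$-regularity embedded in~\eqref{eq:reg-ass}. The pure time-discretization error $d_t\bfu(t_m)-\partial_t\bfu(t_m)$ will require a second-order-in-time estimate on $\bfu$; I plan to obtain this by differentiating~\eqref{eq:pfluid} in time, testing with $\partial_t\bfu$ and using the monotonicity of $\bfS$ together with $\partial_t\bff\in L^2$ and the data $\bfu_0\in W^{1,3p/2}_0(\Omega)$ (which, via the equation at $t=0$, yields $\partial_t\bfu(0)\in L^2$). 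This produces an $O(\kappa^2)$ contribution after summation. A discrete Gronwall then closes the estimate on $\bfemh$, and the triangle inequality together with the interpolation bounds converts it to the stated bound for $\bfu(t_m)-\bfuMh$.

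The main obstacle I anticipate is the interaction between the shifted N-function machinery and the CFL-type condition~\eqref{eq:compatbility} in the degenerate range $p<2$. When estimating the cross term $\bigl(\bfS(\bfD\bfu(t_m))-\bfS(\bfD\bfuMh),\bfD\boldsymbol{\eta}^m\bigr)$ sharply one has to shift between $\phi_{|\bfD\bfu|}$ and $\phi_{|\bfD\bfuMh|}$ using Proposition~\ref{lem:hammer}(ii) and Proposition~\ref{prop:Ph}(iii); an inverse estimate of type~\eqref{eq:inverse} on the discrete functions then produces a factor $h^{-4/p'}$, which after the $\kappa^{-1}$ appearing through the discrete Gronwall argument becomes $h^{4/p'}/\kappa$. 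The assumption $h^{4/p'}\leq\sigma_0\kappa$ is precisely what permits this term to be absorbed into the coercive left-hand side with a small-enough constant; calibrating the $\epsilon$'s in Proposition~\ref{lem:hammer}(ii) against $\sigma_0$ so that everything closes will be the most delicate bookkeeping step.
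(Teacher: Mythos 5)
Your overall architecture (decompose the error with $P_h\bfu(t_m)$, test the error equation with $\bfuMh-P_h\bfu(t_m)$, exploit the monotone quasi-norm structure via Proposition~\ref{lem:hammer}, absorb the CFL-type term $h^{4/p'}/\kappa$, close with discrete Gronwall) is close in spirit to the paper's, and the observation that the $h^{4/p'}/\kappa$ factor has to be calibrated against $\epsilon$'s via~\eqref{eq:compatbility} is correct. However, there is a genuine gap at the very first step, in how you form the error equation.

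You subtract~\eqref{eq:fdiscr-var} from~\eqref{eq:cont-var} evaluated at $t=t_m$. This creates the consistency term $d_t\bfu(t_m)-\partial_t\bfu(t_m)$, and you correctly flag that extracting $O(\kappa^2)$ from it requires a second-order-in-time estimate on $\bfu$. But that regularity is not available here: the only standing assumption is $\bF(\bD\bfu)\in W^{1,2}(I\times\Omega)$, which together with~\eqref{eq:regularity} gives only first-order temporal regularity, namely $\partial_t\bfF(\bD\bfu)\in L^2(I\times\Omega)$ and $\partial_t\nabla\bfu\in L^2(I;L^{6p/(4+p)}(\Omega))$. Your proposed bootstrap — differentiating~\eqref{eq:pfluid} once in time and testing with $\partial_t\bfu$ — yields precisely this first-order information and nothing more; it does not control $\partial_{tt}\bfu$ in any space (that would require differentiating twice, hence $\partial_{tt}\bff$ and second-order compatibility of the data, neither of which is assumed, nor does $\bfu_0\in W^{1,3p/2}_0$ give $\divo\bS(\bD\bfu_0)\in L^2$). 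The constant in the statement is allowed to depend only on $\|\bF(\bD\bfu)\|_{W^{1,2}(I\times\Omega)}$, $\|\partial_t\bff\|_{L^2}$, $\|\bfu_0\|_{1,2}$, and the discretization parameters, so no second time derivative can enter.

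The device that the paper uses to avoid this entirely is to integrate the continuous equation~\eqref{eq:cont-var} over $I_m$ and divide by $\kappa$ before subtracting, as in~\eqref{eq:avg-cont-var}. Because $\frac1\kappa\int_{I_m}\partial_t\bfu\,ds = d_t\bfu(t_m)$ exactly, the time-derivative term in the resulting error identity~\eqref{eq:error-var} is precisely $d_t(\bfuMh-\bfu(t_m))$, with no temporal consistency defect at all. The price is that the elliptic and forcing terms now carry $\bS(\bD\bfu(s))$ and $\bff(s)$ averaged over $I_m$ instead of evaluated at $t_m$, but these residuals are exactly of the form handled by Lemma~\ref{lem:Bochner-lemma}: $\kappa\sum_m\dashint_{I_m}\|g(s)-g(t_m)\|_X^2\,ds\le \kappa^2\|\partial_t g\|^2_{L^2(I;X)}$, which only uses first-order regularity of $g$. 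Without this retarded averaging, the $O(\kappa^2)$ rate is not reachable under the stated hypotheses, so your proposal as written cannot be completed. (A secondary, minor point: the CFL factor $h^{4/p'}/\kappa$ in the paper arises from Young's inequality in the time-derivative term in Lemma~\ref{lem:time} — not from an inverse estimate on discrete functions — because $\frac1\kappa\|\bfu(t_m)-P_h\bfu(t_m)\|_2^2$ appears after absorbing $\frac\kappa4\|d_t(\bfuMh-\bfu(t_m))\|_2^2$; the approximation estimate of Proposition~\ref{prop:Ph}(ii) then supplies the $h^{2+4/p'}$ power.)
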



%

\begin{remark}
  An optimal error estimate for problem~\eqref{eq:pfluid} with a
  nonlinearity depending on the full gradient $\nabla \bu$ under
  slightly different assumptions has been proved in~\cite{der} for
  $p>\frac{2d}{d+2}$. The case of evolutionary $p$-Navier-Stokes
  equations, where the nonlinearity depends on the symmetric gradient
  $\bD\bu$, has been treated in~\cite{pr,dpr1,dpr2,bdr-3-2} in the
  case of space periodic boundary conditions. The evolutionary
  $p$-Stokes equations have been treated in~\cite{sarah-phd} in the
  case of Dirichlet boundary conditions. All these results treat
  intermediate semi-discrete problems, for which a certain regularity
  has to be proved, to obtain the desired optimal convergence rates. This in fact limits the
  results in~\cite{pr,dpr1,dpr2,bdr-3-2} to the case of space periodic
  boundary conditions. Here we avoid such problems by proving the
  error estimate directly without using intermediate semi-discrete
  problems. The approach can be extended to the treatment of
  $p$-Navier-Stokes equations, which will be done in a forthcoming
  paper.

  In \cite{tscherpel-phd}, \cite{sueli-tscherpel} the convergence of a
  fully implicite space-time discretization (without convergence rate
  but also with no assumptions of smoothness of the limiting problem)
  of the evolutionary $p$-Navier-Stokes equations in the case of
  Dirichlet boundary conditions is proved. The
  convergence of the same numerical scheme~\eqref{eq:fdiscr-var}
  towards a weak solution has been recently proved in~\cite{BR2019}
  even for general evolution equations with pseudo-monotone operators.

We wish also to mention the recent results in \cite{BM2020} concerning
the parabolic problem with a variable exponent.
\end{remark}
The regularity assumed in~\eqref{eq:reg-ass} is natural in the sense
that under certain circumstances the existence of such solutions can
be proved. 

\begin{theorem}
  \label{thm:MT}
  Let the tensor field $\bS$ in~\eqref{eq:pfluid} have $(p,\delta)$-structure for some
  $p\in(1,2]$, and $\delta\in[0, \infty)$ fixed but arbitrary, and let
  $\Omega\subset\setR^3$ be a bounded domain with $C^{2,1}$ boundary. Assume that
\begin{equation*} 
  {\bff}\in {L^{p'}(I;L^{p'}(\Omega))} \cap {W^{1,2}(I;L^2(\Omega))},
\end{equation*}
and
\begin{equation*}
  {\bfu_0} \in{W^{2,2}(\Omega)}\cap W^{1,2}_0(\Omega), \text{ with }\divo\bS(\bD\bu_0)\in
  L^2(\Omega). 
\end{equation*}
Then, the system~\eqref{eq:pfluid} has a unique regular
solution, i.e.,
$\bfu \in L^p(I;W^{1,p}_0 (\Omega))$ fulfils    
  \begin{align}
    \label{ineq:locally_strong_for_7_over_5_reg1}
    \begin{split}
     \norm{ \bfu}_{W^{1,\infty}(I; L^2(\Omega))} +
      &\norm{\bF(\bD\bfu)}_{W^{1,2}(I\times\Omega)} 
      \leq c_0\,,
    \end{split}
  \end{align}
  where\footnote{Note that the dependence of the constant $c_0$ on $\delta$ is
    such that  $c_0(\delta)\le c_0(\delta_0)$ for all $\delta\le \delta_0$.  }  $c_0$ depends only on the characteristics of
  $\bS$, $\delta$, $T$,  $\Omega$, $ \|\bu_0\|_{2,2}$, $\norm{\divo
    \bS(\bD\bu_0)}_2$, $ \norm{\ff}_{L^{p'}(I \times\Omega)}$, $ \norm{\ff}_{L^{2}(I\times \Omega)}$, $  \norm{\frac{\partial\ff}{\partial
      t}}_{L^{2}(I\times \Omega)}$, and
  satisfies   \eqref{eq:cont-var} with ${V=W^{1,p}_0(\Omega)\cap L^2(\Omega)}$.  
\end{theorem}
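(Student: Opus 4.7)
I would construct solutions via a Galerkin approximation: take an increasing sequence of finite-dimensional subspaces $V_N \subset V$ spanned by eigenfunctions of a suitable elliptic operator (so that the projections of $\bu_0$ converge strongly in $W^{2,2}\cap W^{1,2}_0$), solve the associated ODE system for $\bu^N$, derive all a~priori estimates uniformly in $N$, and pass to the limit by weak-$*$/compactness. Testing with $\bu^N$ and applying Proposition~\ref{lem:hammer}(i) yields the basic energy bound $\bu^N \in L^\infty(I;L^2)\cap L^p(I;W^{1,p}_0)$ with $\bF(\bD\bu^N)\in L^2(I\times\Omega)$.

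\textbf{Time regularity.} To get $\partial_t \bu \in L^\infty(I;L^2)$, I would first bound $\|\partial_t \bu^N(0)\|_2$ by testing the Galerkin equation at $t=0$ with $\partial_t \bu^N(0)$, exploiting the compatibility $\|\divo \bS(\bD\bu_0)\|_2 \le C$. Then I would formally differentiate the system in time and test with $\partial_t \bu^N$; the critical elliptic contribution
\begin{equation*}
  \sum_{i,j,k,l=1}^3 \partial_{kl} S_{ij}(\bD\bu^N)\, \partial_t D_{kl}\bu^N\, \partial_t D_{ij}\bu^N
\end{equation*}
is, by Proposition~\ref{prop:equivalence} (applied with $\partial_t$ in place of $\partial_i$), pointwise equivalent to $|\partial_t \bF(\bD\bu^N)|^2$. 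Combined with the assumed bound on $\partial_t \bff$ and Gronwall, this produces $\partial_t \bu^N \in L^\infty(I;L^2)$ and $\partial_t \bF(\bD\bu^N)\in L^2(I\times\Omega)$, uniformly in $N$.

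\textbf{Spatial regularity and conclusion.} For $\bF(\bD\bu) \in L^2(I; W^{1,2})$, I would use tangential difference quotients. Interiorly, with a cutoff $\eta$, testing with $-\triangle^s_{-\tau}(\eta^2 \triangle^s_\tau \bu^N)$ and identifying the leading term via~\eqref{eq:p-F} yields $\eta^2 |\partial_s \bF(\bD\bu^N)|^2 \in L^1(I\times\Omega)$ for $s=1,2,3$. At the boundary, I would straighten $\partial\Omega$ using the $C^{2,1}$ chart and run the same argument for the two tangential directions. The missing normal derivative of $\bF(\bD\bu^N)$ is recovered from the equation $\divo \bS(\bD\bu^N) = \partial_t \bu^N - \bff \in L^2$: isolating the highest normal derivative of $\bS$ in the equation, bounding it in $L^2$ using the already-controlled tangential second derivatives, and converting the resulting $\bS$-bound into an $\bF$-bound via~\eqref{eq:p-S}--\eqref{eq:p-F}, with Korn-type inequalities used to pass between $\nabla \bu$ and $\bD\bu$. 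Aubin--Lions compactness then yields a limit satisfying~\eqref{eq:cont-var} and~\eqref{ineq:locally_strong_for_7_over_5_reg1}, and uniqueness is immediate from monotonicity and Proposition~\ref{lem:hammer}(i).

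\textbf{Main obstacle.} The hardest step is the boundary part of the spatial regularity: it must simultaneously cope with the degeneracy of $\phi''(|\bD\bu|)$ (possibly at $\delta=0$), the symmetric-gradient structure of $\bS$ (which forces Korn-type detours between $\nabla\bu$ and $\bD\bu$), and the reconstruction of the normal $\bF$-derivative from the PDE. Carefully tracking the three equivalences between $\bS$-, $\bF$-, and $\phi''$-based quantities in Proposition~\ref{prop:equivalence} through localization, flattening, and the recovery argument is where the bulk of the technical effort lies, and is also what ensures the stated monotone dependence of the final constant on $\delta$.
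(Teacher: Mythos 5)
Your overall scaffolding (Galerkin, differentiation in time for $\partial_t\bF$, tangential difference quotients, recovery of the normal derivative from the PDE, passage to the limit) matches the spirit of the paper, and the time-regularity argument is essentially the one used here. However, there are two genuine gaps that the paper handles by introducing an additional layer of approximation that your proposal lacks.

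First, the paper does \emph{not} prove spatial regularity at the Galerkin level, and for a good reason: your proposed test function $-\triangle^s_{-\tau}(\eta^2 \triangle^s_\tau \bu^N)$ is not an admissible Galerkin test function, since it does not lie in the finite-dimensional subspace $V_N$ (an eigenfunction basis does not make this space invariant under localized shifted difference quotients). The paper resolves this by replacing $\bS$ with the regularized operator $\bS^\vep(\bQ)=\vep\bQ+\bS(\bQ)$. For fixed $\vep>0$ one obtains, via Galerkin, a solution $\bue$ of the \emph{perturbed PDE} with enough time regularity (Proposition~\ref{thm:existence_perturbation}), and only then are the difference-quotient test functions applied directly to that PDE (Proposition~\ref{prop:JMAA2017-1}). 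The $\vep$-term also provides the evolution triple $W^{1,2}_0\hookrightarrow L^2\hookrightarrow W^{-1,2}$, which your Galerkin scheme would lack for $p\in(1,6/5)$ since $(W^{1,p}_0\cap L^2)^3$ is not a Hilbert space and $W^{1,p}_0\not\hookrightarrow L^2$ in that range. Moreover, the first pass at the normal-direction estimate \emph{necessarily} depends on $\vep^{-1}$ and $\delta^{-1}$ (see \eqref{eq:est-eps-1}); it is used only to establish that the equation holds pointwise a.e., after which Proposition~\ref{prop:main} runs a second, structurally different argument to get bounds uniform in $\vep$ and (for the purposes of the final limit) in $\delta$. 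Your proposal collapses these two passes into one and offers no mechanism to control the normal direction uniformly in $\delta$ when $\phi''(|\bD\bu|)$ degenerates.

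Second, the degenerate case $\delta=0$ is not reached by your argument. You flag degeneracy as ``the hardest step'' but give no way around it. The paper's Propositions~\ref{prop:JMAA2017-1} and \ref{prop:main} are stated for $\delta>0$ only, and the case $\delta=0$ is then obtained by a separate approximation: replace $\bS$ by a family $\bS^\kappa$ with $(p,\kappa)$-structure, $\kappa>0$, prove uniform-in-$\kappa$ estimates, and pass $\kappa\to0$ using the convergence results of \cite{bdr-7-5} for $\bF^\kappa$, $\bS^\kappa$. Without either the $\vep$-regularization or the $\kappa$-approximation, your chain from \eqref{eq:p-S} to \eqref{eq:p-F} in the normal-derivative recovery has no leverage when $|\bD\bu|$ is small and $\delta=0$, because $\phi''$ vanishes there.
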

\begin{remark}
  In the literature exist several regularity results which are related
  to Theorem~\ref{thm:MT}. In most cases the regularity is studied for
  a scalar equation and/or in the steady case with a nonlinearity
  depending on the full gradient. The main difficulty of our problem is
  the regularity near the boundary in normal direction. Results in
  the interior are rather standard, since they can be considered as a special
  sub-case of the problem with space periodic boundary conditions
  (cf.~\cite{bdr-7-5} and references therein).  Most results treating
  a nonlinearity depending on the symmetric gradient strongly rely on
  the non-degeneracy of the elliptic operator ($\delta>0$) and more
  regular data, see e.g.~\cite{pruess}. In
  addition, some results concern the case $p>2$ (cf.~\cite{mnr2}, \cite{hugo-petr-rose}), while we are
  here considering the case $p\in(1,2)$ which has some very special
  features already in the steady case.

  The results proved here are not covered in the classical literature.
  A crucial fact is that our problem does not contain a
  divergence-free constraints. This allows us to prove optimal
  regularity results up to the boundary (cf.~\cite{br-plasticity} for
  a treatment of the steady case). For recent results on a related
  parabolic system cf.~\cite{CGM2019}, \cite{CM2013} and references therein.
\end{remark}

The regularity $\bF(\bD\bu)\in W^{1,2}(I\times \Omega)$ can be formulated in
terms of Bochner--Sobolev spaces. From~\cite[Thm.~33]{dr-7-5} and
standard embedding results it follows
\begin{equation*}\label{eq:embed}
   \bF(\bD\bu) \in  L^\infty({I};L^3(\Omega)).
\end{equation*}
Since {$|\bD \bfu|^{p/2}+\delta^{\frac p2}
\sim|\bF(\bD\bu)|+\delta^{\frac p2} $ we get, by H\"older's inequality,}
\begin{equation*}
 \bfu\in L^\infty(I;W^{1,3p/2}(\Omega)).
\end{equation*}
In~\cite[Lemma~4.5]{bdr-7-5} it is shown that 
\begin{align*}
    \|\nabla^2\bfu\|_{\frac{6p}{4+p}}^2
    &\leq c\, \|\nabla \bF(\bD\bu)\|_2^2(\delta+\|\nabla\bu\|_{3p/2})^{2-p},
    \\
    \Bignorm{\frac {\partial \nabla \bfu}{\partial t}}_{\frac{6p}{4+p}}^2
    &\leq c\, \|\partial _t \bF(\bD\bu)\|_2^2(\delta+\|\nabla\bu\|_{3p/2})^{2-p}\,.
\end{align*}
Thus, we also get 
\begin{align}
  \label{eq:regularity}
  \begin{aligned}
    \bfu &\in L^2(I;W^{2,\frac{6p}{4+p}}(\Omega)),
    \\
    \frac {\partial\bfu}{\partial t} &\in L^2(I;W^{1,\frac{6p}{4+p}}(\Omega)),
  \end{aligned}
\end{align}
where the bounds depend only on $\|\bF(\bD\bu)\|_{W^{1,2}(I\times
  \Omega)}$ and $\delta_0$. This implies in particular that $\bfu \in
C(\overline {I};W^{1,\frac{6p}{4+p}}(\Omega))$. 
\section{On the numerical error}
In this section we prove the error estimates from
Theorem~\ref{thm:theorem-parabolic}. To this end we need to derive the equation for the error and to use the discrete Gronwall
lemma together with approximation properties coming from the fact that we have regular
enough solutions, together with the assumption on the nonlinear operator $\bS$.
\subsection{Approximation properties}
Crucial properties to estimate the quasi-norm of the finite dimensional projections
concern the time regularity of the continuous solution. In particular,
the last term in the estimate from Proposition~\ref{prop:Ph} (ii) for ${\bv =\bv(t)}$, $\bw=\bv(s)$ will give convergence rates with respect
to time, under appropriate regularity assumptions on the partial derivative with respect
to time. This is based on the following lemma which is in the same spirit
as~\cite[Proposition~3.6]{bdr-7-5-num} and which will be used several times in the sequel.
\begin{lemma}
  \label{lem:Bochner-lemma}
  Let be given $f:I\to X$, where $X$ is a Banach space and let $f$ be strongly
  measurable. Let us assume that
  \begin{equation*}
    f, \frac{\partial f}{\partial t}\in L^2(I;X).
  \end{equation*}
Then, it holds 
\begin{equation}
  \label{eq:estimate-time-derivative}
  \begin{aligned}
     \ksumo\;\Int\Int\|f(s)-f(t)\|_X^2\,ds\, dt &\leq
    \kappa^2\Bignorm{\frac{\partial f}{\partial t}}_{L^2(I;X)}^2,
    \\
     \ksumo\;\Int\|f(s)-f(t_{m})\|_X^2\,ds &\leq \kappa^2\Bignorm{\frac{\partial f}{\partial t}}_{L^2(I;X)}^2.
  \end{aligned}
\end{equation}
\end{lemma}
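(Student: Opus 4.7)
The plan is to reduce both estimates to the fundamental theorem of calculus in the Bochner sense, followed by Cauchy--Schwarz and a direct sum over intervals.

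Under the assumption $f,\partial_t f\in L^2(I;X)$, the function $f$ admits (after modification on a null set) an absolutely continuous representative as a map $I\to X$, so that for any $s,t\in I$
\begin{equation*}
  f(s)-f(t)=\int_{t}^{s}\frac{\partial f}{\partial\tau}(\tau)\,d\tau\qquad\text{in }X.
\end{equation*}
For $s,t\in I_m$ (so that the interval of integration between $t$ and $s$ is contained in $I_m$ and has length at most $\kappa$), the triangle inequality in $X$ together with Cauchy--Schwarz yields
\begin{equation*}
  \|f(s)-f(t)\|_X^{2}\;\le\;\Bigl(\int_{I_m}\Bignorm{\tfrac{\partial f}{\partial\tau}(\tau)}_X d\tau\Bigr)^{2}\;\le\;\kappa\int_{I_m}\Bignorm{\tfrac{\partial f}{\partial\tau}(\tau)}_X^{2} d\tau.
\end{equation*}
The right-hand side is independent of $s$ and $t$, which is the key observation that makes the double averaging trivial.

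For the first inequality I would then average in $s$ and $t$ over $I_m$ (the right-hand side is unchanged), multiply by $\kappa$ and sum over $m=1,\dots,M$. Since the intervals $I_m$ partition $I$, this gives
\begin{equation*}
  \kappa\sum_{m=1}^{M}\;\Int\Int\|f(s)-f(t)\|_X^{2}\,ds\,dt\;\le\;\kappa^{2}\sum_{m=1}^{M}\int_{I_m}\Bignorm{\tfrac{\partial f}{\partial\tau}}_X^{2} d\tau\;=\;\kappa^{2}\Bignorm{\tfrac{\partial f}{\partial t}}_{L^{2}(I;X)}^{2}.
\end{equation*}
For the second inequality I use the same bound with $t$ replaced by the fixed endpoint $t_m\in\overline{I_m}$: for every $s\in I_m$,
\begin{equation*}
  \|f(s)-f(t_m)\|_X^{2}\;\le\;\kappa\int_{I_m}\Bignorm{\tfrac{\partial f}{\partial\tau}(\tau)}_X^{2}d\tau,
\end{equation*}
and averaging in $s\in I_m$, multiplying by $\kappa$ and summing over $m$ gives exactly the claim.

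There is no real obstacle; the only points requiring a bit of care are (i) the existence of an absolutely continuous representative of $f$, which is standard for Bochner--Sobolev functions, and (ii) keeping track that $\Int=\kappa^{-1}\int_{I_m}$, so that the factor $\kappa$ from averaging combines correctly with the factor $\kappa$ from Cauchy--Schwarz to produce the $\kappa^{2}$ on the right.
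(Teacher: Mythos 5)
Your proof is correct and follows essentially the same route as the paper: write the difference as a Bochner integral of the time derivative, apply Cauchy--Schwarz to get a bound $\kappa\int_{I_m}\|\partial_t f\|_X^2$ that is independent of $s$ and $t$, then average, multiply by $\kappa$, and sum over $m$. The only cosmetic difference is that you integrate directly over all of $I_m$ in the Cauchy--Schwarz step while the paper first integrates over $(t,s)$ and enlarges afterward; the substance is identical.
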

\begin{proof}
  We prove the first estimate from~\eqref{eq:estimate-time-derivative}. We start, thanks
  to the Bochner theorem (see Yosida~\cite[Chap.~V.5]{yosida}), by estimating the
  difference as follows
\begin{equation*}
  \|f(s)-f(t)\|_X=\Big\|\int\limits_{t}^s \frac{\partial f}{\partial t} (\rho)\,d\rho\Big\|_X
  \leq\int\limits_{t}^s \Big\|\frac{\partial f}{\partial t} (\rho)\Big\|_X\,d\rho\qquad
  \forall\,s,t\in  I.
\end{equation*}
Hence, by Cauchy-Schwarz inequality, for all $s,t\in I_m\subseteq I$, since
$|s-t|\leq\kappa$, it follows
\begin{equation*}
  \begin{aligned}
    \|f(s)-f(t)\|_X^2&\leq\Big(\int\limits_{t}^s
    \Big\|\frac{\partial f}{\partial t}(\rho)\Big\|_X\,d\rho\Big)^2\leq \int\limits_{t}^s
    \Big\|\frac{\partial f}{\partial t} (\rho)\Big\|_X^2 d\rho \  \Big|\int\limits_{t}^s\,d\rho\Big|,
    \\
    &\leq    \kappa \int\limits_{t}^s \Big\|\frac{\partial f}{\partial t}(\rho)\Big\|_X^2\,d\rho.
\end{aligned}
\end{equation*}
From the latter, we get by a double integration 
\begin{equation*}
  \Int\Int\|f(s)-f(t)\|_X^2\,ds \,dt\leq \kappa\,   \Int\Int\int\limits_{I_m}
  \Big\|\frac{\partial f}{\partial t} (\rho)\Big\|_X^2\,d\rho\, ds\, dt=\kappa\int\limits_{I_m}
  \Big\|\frac{\partial f}{\partial t} (\rho)\Big\|_X^2\,d\rho,
\end{equation*}
since the right-hand side does not depend on $s$ and
$t$.
Multiplying by $\kappa$ and summing over $m$ we get
\begin{equation*}
  \ksumo\Int\Int\|f(s)-f(t)\|_X^2\,ds\, dt \leq
  \kappa^2\ksumo\int\limits_{I_m}
  \Big\|\frac{\partial f}{\partial t} (\rho)\Big\|_X^2\,d\rho=\kappa^2\Bignorm{\frac{\partial f}{\partial t}}_{L^2(I;X)}^2.
\end{equation*}

The second estimates follows in the same way observing that $f,\frac
{\partial f}{\partial t} \in L^2(I;X)$
implies $f\in C(I;X)$, hence $f(t_{m})$ is well defined. Then, one can re-write the
difference term as follows
  \begin{equation*}
    \|f(s)-f(t_{m})\|_X=\Big\|\int\limits_{t_m}^s \frac{\partial f}{\partial t} (\rho)\,d\rho\Big\|_X,
  \end{equation*}
and by using the same techniques as before one concludes the proof.
\end{proof}

%
%
%
%
%
%
%
%
\subsection{Error estimates}
To prove the Theorem~\ref{thm:theorem-parabolic} we take the retarded
averages of~\eqref{eq:cont-var} over $I_m$, $m=1,\ldots, M$
\begin{equation}
  \label{eq:avg-cont-var}
    ( d_t\bfu(t_{m}),\bfv)+\Int\hskp{
      \bS(\bD\bfu(s))}{\bD\bfv} \,ds=\Int (\bff(s),\bfv )\, ds,
\end{equation}
which is valid for all $\bfv\in V$. As usual, we subtract equation~\eqref{eq:avg-cont-var}
from~\eqref{eq:fdiscr-var} to obtain the equation for the error
\begin{equation}
  \label{eq:error-var}
    ( {d_t}(\bfuMh-\bfu(t_{m})),\bfvM)+\Int\hskp{
      \bS(\bD\bfuMh)-\bS(\bD\bfu(s))}{\bD\bfv} \,ds =\Int (\ff (t_m)-\bff(s),\bfv )\, ds,
\end{equation}
valid for all $ m=1,\ldots, M$ and for all $\bfvM\in V_h$.  


\begin{proposition}
  \label{prop:err}
Under the assumptions of Theorem~\ref{thm:theorem-parabolic} we have
the following discrete inequality, valid for $m=1,\dots,M$ and
$0<\kappa\le 1 $
\begin{equation}
  \label{eq:error-estimate}
  \begin{aligned}
    &{d_t}\|\bfuMh-\bfu(t_{m})\|_2^2+c\,\|\bF(\bD\bfuMh)-\bF(\bD\bfu(t_{m}))\|^2_2
    \\
    &\leq c\,h^{2}\Int\|\nabla\bF(\bD\bu(s))\|_2^{2}\,ds +
    c\Int\|\bF(\bD\bu(s))-\bF(\bD\bu(t_{m}))\|^2_2\,ds
    \\
    &\quad+ c\,\frac {h^{2+4/p'}}\kappa \Int \| \nabla ^2\bu(s)\|_{\frac{6p}{4+p}} ^2\,ds
    + c\,\frac{h^{4/p'}}{\kappa}\Big\|\nabla \bfu(t_{m})-\Int
    \nabla\bfu(s)\,ds\Big\|_{\frac{6p}{4+p}}^2
    \\
    &\quad + c\, \Big\|\ff(t_{m})-\Int \ff(s)\,ds\Big\|_{2}^2 + c \, \|\bfuMh
    -\bu(t_{m})\|^2_2.
    \end{aligned}
\end{equation}
\end{proposition}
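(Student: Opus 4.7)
The plan is to test the error equation~\eqref{eq:error-var} with the discrete error function $\bfem_h:=\bfuMh-P_h\bfu(t_m)\in V_h$ and to estimate each of the three resulting groups of terms (time derivative, monotone operator, forcing) separately. Throughout I write $\bfem:=\bfuMh-\bfu(t_m)$ and $\eta^m:=\bfu(t_m)-P_h\bfu(t_m)$, so $\bfem_h=\bfem+\eta^m$.

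For the time-derivative term $(d_t\bfem,\bfem_h)=(d_t\bfem,\bfem)+(d_t\bfem,\eta^m)$, I would combine the standard backward-difference identity $(d_t\bfem,\bfem)=\tfrac12 d_t\|\bfem\|_2^2+\tfrac{\kappa}{2}\|d_t\bfem\|_2^2$ with Young applied to $(d_t\bfem,\eta^m)\ge-\tfrac{\kappa}{2}\|d_t\bfem\|_2^2-\tfrac{1}{2\kappa}\|\eta^m\|_2^2$; the two $\tfrac{\kappa}{2}\|d_t\bfem\|_2^2$ pieces cancel and one is left with $\tfrac12 d_t\|\bfem\|_2^2-\tfrac{1}{2\kappa}\|\eta^m\|_2^2$ on the left. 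To obtain the third and fourth right-hand side contributions with their precise structure I would exploit the linearity of $P_h$ and the fact that it commutes with the Bochner integral to write
\begin{equation*}
\eta^m=(I-P_h)\Bigl(\bfu(t_m)-\Int\bfu(s)\,ds\Bigr)+\Int(I-P_h)\bfu(s)\,ds,
\end{equation*}
and then apply Proposition~\ref{prop:Ph}(ii) with $\ell=1,\,q=\tfrac{6p}{4+p}$ (exponent $2/p'$) on the first summand and with $\ell=2$ (exponent $1+2/p'$) on the second, together with Jensen's inequality.

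For the nonlinear term I would decompose
\begin{equation*}
\bD\bfem_h=(\bD\bfuMh-\bD\bfu(s))+(\bD\bfu(s)-\bD P_h\bfu(s))+(\bD P_h\bfu(s)-\bD P_h\bfu(t_m))
\end{equation*}
under the $I_m$-integral and treat the three resulting contributions separately. The first piece, by Proposition~\ref{lem:hammer}(i) and a triangle inequality applied to $\bF$ (combined with the fact that $\|\bF(\bD\bfuMh)-\bF(\bD\bfu(t_m))\|_2$ is independent of $s$), yields $c\|\bF(\bD\bfuMh)-\bF(\bD\bfu(t_m))\|_2^2$ on the left together with the second right-hand-side contribution. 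The second piece is $\epsilon$-absorbable into the first via Proposition~\ref{lem:hammer}(ii), with the $c_\epsilon$-part controlled by Proposition~\ref{prop:Ph}(i), producing the $h^2$-term. The third piece is the main technical step: a pointwise Young inequality on the shifted $N$-function pair $(\phi_{|\bD\bfu(s)|},(\phi_{|\bD\bfu(s)|})^*)$ combined with $|\bS(\bP)-\bS(\bQ)|\sim\phi'_{|\bP|}(|\bP-\bQ|)$ from Proposition~\ref{lem:hammer}(i) reduces it to a small absorbable multiple of $\int_{I_m}\|\bF(\bD\bfuMh)-\bF(\bD\bfu(s))\|_2^2\,ds$ plus the integral over $I_m$ of $\int_\Omega\phi_{|\bD\bfu(s)|}(|\bD P_h\bfu(s)-\bD P_h\bfu(t_m)|)\,dx$, and Proposition~\ref{prop:Ph}(iii) bounds the latter by exactly $h^2\|\nabla\bF(\bD\bfu(s))\|_2^2+\|\bF(\bD\bfu(s))-\bF(\bD\bfu(t_m))\|_2^2$.

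Finally, the forcing term rewrites as $(\ff(t_m)-\Int\ff(s)\,ds,\bfem_h)$ because $\bfem_h$ does not depend on $s$; Cauchy--Schwarz, Young and $\|\bfem_h\|_2^2\le 2\|\bfem\|_2^2+2\|\eta^m\|_2^2$ give the fifth and sixth right-hand side terms, while the remaining $\|\eta^m\|_2^2$ is dominated (using $\kappa\le 1$) by the third and fourth terms already produced. The main obstacle I anticipate is precisely the third nonlinear piece: a naive splitting via $\bD\bfu(t_m)-\bD P_h\bfu(t_m)$ would force $\|\nabla\bF(\bD\bfu(t_m))\|_2^2$ evaluated at the single time level $t_m$ into the bound, which is incompatible with the $L^2(I_m)$-averaged right-hand side of the proposition; Proposition~\ref{prop:Ph}(iii) is exactly the technical tool that circumvents this and is what makes the direct (without semi-discrete intermediaries) approach of this proposition feasible.
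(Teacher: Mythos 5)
Your proposal is correct and shares the paper's overall scaffold — test with $\bfuMh-P_h\bfu(t_m)$, treat the time-derivative, monotone, and forcing terms separately, and rely on Propositions~\ref{lem:hammer}, \ref{prop:Ph}(ii) and especially \ref{prop:Ph}(iii) — but your handling of the nonlinear term is genuinely different and substantially leaner than Lemma~\ref{lem:ell}. The paper first adds and subtracts $\bS(\bD\bfu(t_m))$ in the \emph{first} slot (splitting into $\mathcal{A}_1,\mathcal{A}_2$), and then recursively peels apart the test function, producing a tree of sub-terms ($\mathcal{A}_{1,1},\mathcal{A}_{1,2},\mathcal{B}_i,\mathcal{C}_i,\mathcal{D}_i,\mathcal{E}_i,\mathcal{E}_{2,j}$); in the process the Young step in $\mathcal{C}_2$ is carried out with the shift $\phi_{|\bD\bfu(t_m)|}$ and requires a separate shift-change back to $\phi_{|\bD\bfu(s)|}$ before Proposition~\ref{prop:Ph}(iii) can be invoked. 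You instead decompose only the \emph{second} slot, once, into $(\bD\bfuMh-\bD\bfu(s))+(\bD\bfu(s)-\bD P_h\bfu(s))+(\bD P_h\bfu(s)-\bD P_h\bfu(t_m))$, keeping everything centred at the time level $s$; the coercivity then lands on $\|\bF(\bD\bfuMh)-\bF(\bD\bfu(s))\|_2^2$ (converted to $\|\bF(\bD\bfuMh)-\bF(\bD\bfu(t_m))\|_2^2$ by a triangle inequality under $\dashint_{I_m}$), the second piece is a textbook application of Proposition~\ref{lem:hammer}(ii)/\ref{prop:Ph}(i), and the third piece uses Young with shift $\phi_{|\bD\bfu(s)|}$ from the start, so Proposition~\ref{prop:Ph}(iii) applies immediately without an intermediate shift change. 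This buys a much shorter argument (three terms instead of eight-plus), at the price of needing the triangle-inequality step to relocate the coercivity from level $s$ to level $t_m$ — a minor cost. Your treatment of the time-derivative and forcing terms, including the splitting of $\bfu(t_m)-P_h\bfu(t_m)$ through the commutativity of $P_h$ with the $I_m$-average and the use of $\kappa\le1$ to reconcile Lemma~\ref{lem:f} with the $\kappa$-weighted terms of Lemma~\ref{lem:time}, matches the paper essentially verbatim (you use Young with constants $\kappa/2,\,1/(2\kappa)$ rather than $\kappa/4,\,1/\kappa$, an immaterial difference). Your closing remark correctly identifies Proposition~\ref{prop:Ph}(iii) as the crux that makes the direct, semi-discretization-free estimate possible.
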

To prove Proposition~\ref{prop:err} we treat separately the terms resulting from
using in~\eqref{eq:error-var} the legitimate test function 
\begin{equation*}
  \bfvM=\bfuMh-P_h\bfu(t_{m})\,.
\end{equation*}
We start with the term involving the discrete time-derivative.
\begin{lemma}
  \label{lem:time}
  It holds that 
  \begin{equation*}
    \begin{aligned}
      &({d_t}(\bfuMh-\bfu(t_{m})),\bfuMh-P_h\bfu(t_{m}))
      \\
      &\geq\frac{1}{2}d_t\|\bfuMh-\bfu(t_{m})\|^2_2 
      -c\, \frac{h^{2+4/p'}}{\kappa}\Int \|\nabla ^2\bu(s)\|_{\frac{6p}{4+p}}^2\,d s
      \\
      &\quad -c\,\frac{h^{4/p'}}{\kappa}\Big\|\nabla\bfu(t_{m})-\Int
      \nabla \bfu(s)\,ds\Big\|_{\frac{6p}{4+p}}^2.
    \end{aligned}
  \end{equation*}
\end{lemma}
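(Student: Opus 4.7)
The natural starting point is to split the test function as
\begin{equation*}
  \bfuMh - P_h\bfu(t_m) = \bigl(\bfuMh - \bfu(t_m)\bigr) + \bigl(\bfu(t_m) - P_h\bfu(t_m)\bigr),
\end{equation*}
so that the inner product breaks into a ``clean'' piece and a projection-error piece. For the clean piece the algebraic identity $2a(a-b)=a^2-b^2+(a-b)^2$ yields
\begin{equation*}
  \bigl(d_t(\bfuMh-\bfu(t_m)),\bfuMh-\bfu(t_m)\bigr) = \tfrac12\, d_t\|\bfuMh-\bfu(t_m)\|_2^2 + \tfrac{\kappa}{2}\|d_t(\bfuMh-\bfu(t_m))\|_2^2,
\end{equation*}
and the extra quadratic ``reservoir'' $\tfrac{\kappa}{2}\|d_t(\bfuMh-\bfu(t_m))\|_2^2$ will later absorb all error contributions.

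The heart of the argument is the remaining pairing $\mathrm{II}:=\bigl(d_t(\bfuMh-\bfu(t_m)),\,\bfu(t_m)-P_h\bfu(t_m)\bigr)$. The key trick is to decompose, using the linearity of $P_h$ together with adding and subtracting the time mean over $I_m$,
\begin{equation*}
  \bfu(t_m) - P_h\bfu(t_m) = \Int (I-P_h)\bfu(s)\,ds \;+\; (I-P_h)\Bigl(\bfu(t_m) - \Int\bfu(s)\,ds\Bigr).
\end{equation*}
The first summand records the \emph{spatial} interpolation error averaged in time; the second summand records the spatial projection of the \emph{temporal} oscillation of $\bfu$ on $I_m$. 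This double decomposition is the conceptual step that is responsible for the two distinct right-hand side contributions in the lemma.

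Each summand is then estimated by Cauchy--Schwarz in $L^2$, paired with $\|d_t(\bfuMh-\bfu(t_m))\|_2$, followed by Proposition~\ref{prop:Ph}(ii). For the first summand I use the case $\ell=2$, $q=\tfrac{6p}{4+p}$, which gives $\|\bv-P_h\bv\|_2 \le c\,h^{1+2/p'}\|\nabla^2\bv\|_{\frac{6p}{4+p}}$, combined with Jensen's inequality to move the $L^2$-norm inside the time average. For the second summand I use the case $\ell=1$, $q=\tfrac{6p}{4+p}$, which produces the factor $h^{2/p'}$ together with precisely the quantity $\|\nabla\bfu(t_m)-\Int\nabla\bfu(s)\,ds\|_{\frac{6p}{4+p}}$ appearing in the statement. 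A final Young's inequality with weight $\epsilon\kappa$ converts each product into the sum of an absorbable $\epsilon\kappa\|d_t(\bfuMh-\bfu(t_m))\|_2^2$ contribution and an interpolation term with the desired prefactor $\tfrac{1}{\kappa}$. Choosing $\epsilon$ small enough (e.g.\ $\epsilon=\tfrac18$) keeps $\tfrac{\kappa}{2}-2\epsilon\kappa\ge 0$, so the reservoir absorbs the leftover $\|d_t(\bfuMh-\bfu(t_m))\|_2^2$ terms and they can be dropped.

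The main technical obstacle is the careful bookkeeping of the powers of $\kappa$: the $\tfrac{1}{\kappa}$ prefactor on the right-hand side of the target inequality comes precisely from Young's inequality balanced against a reservoir of size $\kappa$, while the time averages $\Int$ come from Jensen on the first summand. Without the splitting of $\bfu(t_m)-P_h\bfu(t_m)$ into a purely spatial part and a purely temporal part, one could not recover both the $h^{2+4/p'}$ and the $h^{4/p'}$ contributions simultaneously.
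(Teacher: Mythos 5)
Your proof is correct and follows essentially the same route as the paper: the same splitting of the test function, the same algebraic identity producing the $\tfrac{\kappa}{2}\|d_t(\cdot)\|_2^2$ reservoir, the same decomposition of $\bfu(t_m)-P_h\bfu(t_m)$ via the time mean over $I_m$ and linearity of $P_h$, and Proposition~\ref{prop:Ph}(ii) applied with $\ell=2$ (plus Jensen) and $\ell=1$, respectively, to obtain the two right-hand side terms. The only cosmetic difference is that you insert an explicit Cauchy--Schwarz before Young's inequality, whereas the paper applies Young's inequality directly to the $L^2$ pairing.
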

\begin{proof}
  We re-write in this case the test function as follows
  \begin{equation}\label{eq:split}
    \bfvM=\bfuMh-\bfu(t_{m})+\bfu(t_{m})-P_h\bfu(t_{m}),
  \end{equation}
  to obtain
  \begin{equation*}
    \big({d_t}(\bfuMh-\bfu(t_{m})),\bfuMh-\bfu(t_{m})\big)=\frac{1}{2}d_t\|\bfuMh-\bfu(t_{m})\|^2_2
    +\frac{\kappa}{2}\|d_t(\bfuMh-\bfu(t_{m}))\|^2_2.  
  \end{equation*}
  The remaining term is treated as follows
  \begin{equation*}
    (d_t(\bfuMh-\bfu(t_{m})),\bfu(t_{m})-P_h\bfu(t_{m}))\leq\frac{\kappa}{4}\|d_t(\bfuMh-\bfu(t_{m}))\|^2_2
    +\frac{1}{\kappa}\|\bfu(t_{m})-P_h\bfu(t_{m})\|^2_2. 
  \end{equation*}
  The first term is absorbed in the last term of the previous equality. Note that the
  second term can not be estimated as $\|\bfu(t_m)- P_h\bfu(t_m)\|^2_2\leq c
  \,h^{2+4/p'}\|\nabla ^2\bu(t_m)\|_{\frac{6p}{4+p}}^2$, since the right-hand side might
  be infinite; in view of the regularity of $\bu$ the norm
  $\|\nabla^2\bu(t)\|_{\frac{6p}{4+p}}$ is only finite for almost everywhere $t\in I$. Thus, we
  proceed differently and add and subtract (time) mean values. Using that $P_h
  \dashint\nolimits_{I_m}\!\!\bfu(s)\,ds =\dashint\nolimits_{I_m} \!\!P_h \bfu(s)\,ds$, and 
  Fubini's theorem, we get
  \begin{equation*}
    \begin{aligned}
      &\|\bfu(t_{m})-P_h\bfu(t_{m})\|_2
      \\
      &\leq \Big\|\bfu(t_{m})-\Int \bfu(s)\, ds
      -P_h\Big(\bfu(t_{m})-\Int\bfu(s)\,ds \Big)\Big\|_2+ \Big\|\Int
      \bfu(s)- P_h\bfu(s)\,ds\Big\|_2.
    \end{aligned}
  \end{equation*}
  Both terms are estimated using Proposition \ref{prop:Ph} (ii) and the
  regularities~\eqref{eq:regularity} to obtain
  \begin{align*}
    \Big\|\bfu(t_{m})-\Int
    \bfu(s)\,ds-P_h\Big(\bfu(t_{m})-\Int\bfu(s)\,ds\Big)\Big\|_2
    &\le c\, h^{2/p'}\big\|\nabla\bfu(t_{m})-\Int \nabla\bfu(s)\,ds \big\|_{\frac{6p}{4+p}}\,,
  \end{align*}
  where we used that $W^{1,\frac{6p}{4+p}}(\Omega)\vnor \vnor L^2(\Omega)$, valid for all
  $p>1$, and
  \begin{equation*}
    \begin{aligned}
      \Big\|\Int \bfu(s)- P_h\bfu(s)\,ds\Big\|^2_2&=\int\limits\limits_\Omega\Big|\Int \bfu(s)-
      P_h\bfu(s)\,ds\Big|^2\,d\bx\leq\int\limits_\Omega\Int |\bfu(s)- P_h\bfu(s)|^2\,ds\,d\bx
      \\
      &=\Int\;\|\bfu- P_h\bfu\|_2^2\, ds\leq c
      \,h^{2+4/p'}\Int       \|\nabla ^2\bu(s)\|^2_{\frac {6p}{p+4}}\,ds.
    \end{aligned}
  \end{equation*}
%
%
  Putting the estimates together we obtain the assertion.
\end{proof}
%
%
Next, we estimate the  term with the $(p,\delta)$-structure and obtain  the
following inequality: 
\begin{lemma}
  \label{lem:ell}
  It holds that 
  \begin{equation*}
    \begin{aligned}
      &\Int\bighskp{ \bS(\bD\bfuMh)-\bS(\bD\bfu(s))}{\bD\bfuMh -\bD
        P_h\bfu(t_{m})} \,ds
      \\
      &\geq\|\bF(\bD\bfuMh)-\bF(\bD\bu(t_m))\|^{2}_2
      -c\, h^{2}\Int\|\nabla\bF(\bD\bu(s))\|^{2}_2\,ds \\
      &\quad - c\Int\|\bF(\bD\bu(s))-\bF(\bD\bu(t_{m}))\|^2_2\,ds.
    \end{aligned}
  \end{equation*}
\end{lemma}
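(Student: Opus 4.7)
The plan is to decompose the test direction $\bD\bfuMh-\bD P_h\bfu(t_m)$ into three pieces tailored to different structural tools:
\begin{equation*}
\bD\bfuMh-\bD P_h\bfu(t_m) = \bigl[\bD\bfuMh-\bD\bfu(s)\bigr] + \bigl[\bD\bfu(s)-\bD P_h\bfu(s)\bigr] + \bigl[\bD P_h\bfu(s)-\bD P_h\bfu(t_m)\bigr].
\end{equation*}
Accordingly the left-hand side of the lemma splits as $I_1+I_{2,1}+I_{2,2}$, and each piece is handled by a different ingredient from Section~\ref{sec:preliminaries}.

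The principal term $I_1$ is controlled from below using the monotonicity relation of Proposition~\ref{lem:hammer}(i), which gives $I_1\sim\Int\|\bF(\bD\bfuMh)-\bF(\bD\bfu(s))\|_2^2\,ds$. For the spatial projection error $I_{2,1}$ I would apply the first form of Proposition~\ref{lem:hammer}(ii) with $\bfu=\bfuMh$, $\bfv=\bfu(s)$ and $\bfw=P_h\bfu(s)$, followed by the $\bF$-approximation estimate of Proposition~\ref{prop:Ph}(i); this bounds $|I_{2,1}|$ by an $\epsilon$-small multiple of $\Int\|\bF(\bD\bfuMh)-\bF(\bD\bfu(s))\|_2^2\,ds$ plus $c_\epsilon\,h^2\Int\|\nabla\bF(\bD\bfu(s))\|_2^2\,ds$.

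The technical heart of the argument is the third term $I_{2,2}$, because one cannot simply pass a temporal increment of $\bfu$ through $P_h$ and retain a clean $L^2$-bound on $\bF$. Here the decisive tool is Proposition~\ref{prop:Ph}(iii), whose right-hand side already produces exactly the two error terms appearing in the statement. To bring the integrand of $I_{2,2}$ into this shape, I first invoke the pointwise equivalence $|\bS(\bD\bfuMh)-\bS(\bD\bfu(s))|\sim\phi'_{|\bD\bfu(s)|}(|\bD\bfuMh-\bD\bfu(s)|)$ from Proposition~\ref{lem:hammer}(i), and then apply the shifted Young inequality~\eqref{ineq:young} together with the standard identity $(\phi_a)^*(\phi'_a(t))\le c\,\phi_a(t)$ and the equivalence $\phi_{|\bD\bfu(s)|}(|\bD\bfuMh-\bD\bfu(s)|)\sim|\bF(\bD\bfuMh)-\bF(\bD\bfu(s))|^2$. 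This produces an upper bound on $|I_{2,2}|$ of the same shape as for $I_{2,1}$, with the extra term $c_\epsilon\Int\|\bF(\bD\bfu(s))-\bF(\bD\bfu(t_m))\|_2^2\,ds$ coming precisely from Proposition~\ref{prop:Ph}(iii).

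Finally I fix $\epsilon$ small enough so that the absorption terms are dominated by the lower bound on $I_1$, and convert $\Int\|\bF(\bD\bfuMh)-\bF(\bD\bfu(s))\|_2^2\,ds$ into $\tfrac12\|\bF(\bD\bfuMh)-\bF(\bD\bfu(t_m))\|_2^2$ via the triangle inequality, at the cost of another $\Int\|\bF(\bD\bfu(s))-\bF(\bD\bfu(t_m))\|_2^2\,ds$. Collecting all contributions produces exactly the stated lower bound.
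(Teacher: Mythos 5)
Your proposal is correct, and it takes a genuinely different and more economical route than the paper's proof. The paper first introduces $\pm\bS(\bD\bfu(t_m))$ in the \emph{first} slot, producing $\mathcal{A}_1+\mathcal{A}_2$, and then cascades through several further splits ($\mathcal{B}$, $\mathcal{C}$, $\mathcal{D}$, $\mathcal{E}$-terms) so that the coercivity term $\hskp{\bS(\bD\bfuMh)-\bS(\bD\bfu(t_m))}{\bD\bfuMh-\bD\bfu(t_m)}$ emerges directly centred at $t_m$, yielding $\|\bF(\bD\bfuMh)-\bF(\bD\bfu(t_m))\|_2^2$ without any post-processing. You instead keep the stress difference $\bS(\bD\bfuMh)-\bS(\bD\bfu(s))$ intact and split only the second slot into three pieces, so the coercivity comes out centred at $s$, i.e.\ as $\Int\|\bF(\bD\bfuMh)-\bF(\bD\bfu(s))\|_2^2\,ds$, which you then convert to the $t_m$-centred quantity by a triangle inequality at the cost of an extra $\Int\|\bF(\bD\bfu(s))-\bF(\bD\bfu(t_m))\|_2^2\,ds$. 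Both routes draw on exactly the same toolbox (Proposition~\ref{lem:hammer}(i),(ii), the shifted Young inequality, Proposition~\ref{prop:Ph}(i),(iii)), and both produce the stated bound; what yours buys is a much flatter proof tree with fewer intermediate terms, while also avoiding the shift change from $\phi_{|\bD\bfu(t_m)|}$ to $\phi_{|\bD\bfu(s)|}$ that the paper needs before invoking Proposition~\ref{prop:Ph}(iii), since your version already has the shift at $|\bD\bfu(s)|$. The only thing you gloss over is that Proposition~\ref{lem:hammer}(ii) is stated as a one-sided inequality but needs to control an absolute value for $I_{2,1}$; this is harmless, since the same pointwise Young argument that proves it actually bounds $|(\bfS(\bD\bu)-\bfS(\bD\bv),\bD\bw-\bD\bv)|$, and the paper uses it the same way for its term $\mathcal{D}_2$.
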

\begin{proof}
We   re-write the term with the $p$-structure as follows
  \begin{equation*}
    \begin{aligned}
      &\Int\bighskp{ \bS(\bD\bfuMh)-\bS(\bD\bfu(t_m))+
        \bS(\bD\bfu(t_m))-\bS(\bD\bfu(s))}{\bD\bfuMh -\bD
        P_h\bfu(t_{m})} \,ds
      \\
      &= \Int\bighskp{ \bS(\bD\bfuMh)-\bS(\bD\bfu(t_m))}{\bD\bfuMh
        -\bD P_h\bfu(t_{m})} \,ds
      \\
      &\quad + \Int\bighskp{
        \bS(\bD\bfu(t_m))-\bS(\bD\bfu(s))}{\bD\bfuMh -\bD
        P_h\bfu(t_{m})} \,ds =:\mathcal{A}_1
      +\mathcal{A}_2,
    \end{aligned}
  \end{equation*}
  and estimate the two terms separately.

  {\bf Estimate of $\mathcal{A}_1
    $}: We have 
  \begin{equation*}
    \begin{aligned}
      \mathcal{A}_1
      &=\bighskp{ \bS(\bD\bfuMh)-\bS(\bD\bfu(t_m))}{\bD\bfuMh
        -\bD \bfu(t_{m})}
      \\
      &\quad + \bighskp{ \bS(\bD\bfuMh)-\bS(\bD\bfu(t_m))}{\bD\bfu(t_m)
        -\bD P_h\bfu(t_{m})}
      =:\mathcal{A}_{1,1}+\mathcal{A}_{1,2}.
    \end{aligned}
  \end{equation*}
  The first term is giving the information
  \begin{equation*}
    \mathcal{A}_{1,1}= \bighskp{ \bS(\bD\bfuMh)-\bS(\bD\bfu(t_m))}{\bD\bfuMh
        -\bD \bfu(t_{m})}\geq
    c\,\|\bF(\bD\bfuMh)-\bF(\bD\bfu(t_{m}))\|^2_2,
  \end{equation*}
  while the second can be estimated as follows, by adding and subtracting the average
  $\dashint\nolimits_{I_m}\!\!\bfu(s)\,ds$ in the second entry. In fact, we have
  \begin{equation*}
    \begin{aligned}
      \mathcal{A}_{1,2} 
      & = \Int\bighskp{ \bS(\bD\bfuMh)-\bS(\bD\bfu(t_m))}{\bD\bfu(s) -\bD
        P_h\bfu(t_{m})} \,ds
      \\
      &\quad + \Int\bighskp{ \bS(\bD\bfuMh)-\bS(\bD\bfu(t_m))}{\bD\bfu(t_m) -\bD
        \bfu(s)} \,ds=:\mathcal{B}_1+\mathcal{B}_2.
    \end{aligned}
  \end{equation*}
  By Proposition~\ref{lem:hammer} it follows 
  \begin{equation*}
    \begin{aligned}
      |\mathcal{B}_2|&
      \leq
      \epsilon\,\|\bF(\bD\bu(t_{m}))-\bF(\bD\bu_h^m)\|^2_2+c_\epsilon\Int
      \|{\bF(\bD\bu(s))}-\bF(\bD\bu(t_{m}))\|^2_2\,ds.
    \end{aligned}
  \end{equation*}
  Next, we split $\mathcal{B}_{1}$ as follows, by adding and subtracting $P_h
  \dashint\nolimits_{I_m}\!\!\bfu(s)\,ds =\dashint\nolimits_{I_m} \!\!P_h \bfu(s)\,ds$, again in
  the second entry, 
  \begin{equation*}
    \begin{aligned}
      \mathcal{B}_1 
      &= \Int\bighskp{ \bS(\bD\bfuMh)-\bS(\bD\bfu(t_m))}{\bD\bfu(s) -\bD
        P_h\bfu(s)} \,ds
      \\
      &\quad +\Int\bighskp{ \bS(\bD\bfuMh)-\bS(\bD\bfu(t_m))}{\bD P_h\bfu(s) -\bD
        P_h\bfu(t_{m})} \,ds 
      =:\mathcal{C}_1+\mathcal{C}_2.
    \end{aligned}
  \end{equation*}
  The term $\mathcal{C}_2$ can be estimated by using
  Proposition~\ref{lem:hammer} (i) and Young's
  inequality~\eqref{ineq:young} as 
  \begin{equation*}
    \begin{aligned}
      \mathcal{C}_2 
      &\leq c\Int\int\limits_\Omega\phi'_{|\bD\bfu(t_{m})|}\big(|\bD\bfuMh-\bD\bfu(t_{m})|\big)\,
      \big|\bD(P_h[{\bu}(s)-\bfu(t_{m})])\big|\,d\bx \,ds
      \\
      &\leq \epsilon
      \Int\int\limits_\Omega\phi_{|\bD\bfu(t_{m})|}\big(|\bD\bfuMh-\bD\bfu(t_{m})|\big)\,d\bx
      \,ds
      \\
      &\quad +
      c_\epsilon\Int\int\limits_\Omega\phi_{|\bD\bfu(t_{m})|}\big(\big|\bD(P_h[{\bu}(s)-\bfu(t_{m})])\big|\big)\,d\bx\,
      ds
      \\
      &\leq \epsilon \, \|\bF(\bD\bfuMh)-\bF(\bD\bfu(t_{m})\|^2_2
      +c_\epsilon\Int\int\limits_\Omega\phi_{|\bD\bfu(t_{m})|}\big(\big|\bD(P_h[{\bu}(s)-\bfu(t_{m})])\big|\big)\,d\bx\,
      ds.
    \end{aligned}
  \end{equation*}
  The latter term from the above inequality can be estimated by a shift change, see
  Proposition~\ref{lem:hammer} (ii)
  \begin{equation*}
    \begin{aligned}
      &\int\limits_\Omega\phi_{|\bD\bfu(t_{m})|}\big(\big|\bD(P_h[{\bu}(s)-\bfu(t_{m})])\big|\big)\,d\bx
      \\
      &\leq c \,\|\bF(\bD\bfu(t_{m})-\bF(\bD\bfu(s))\|^2_2 +c \int\limits_\Omega
      \phi_{|\bD\bfu(s)|}\big(\big|\bD(P_h[{\bu}(s)-\bfu(t_{m})])\big|\big)\,d\bx.
    \end{aligned}
  \end{equation*}
For the last term we use Proposition~\ref{prop:Ph} (iii) and obtain 
\begin{equation*}
  \begin{aligned}
    |\mathcal{C}_{2}|&\leq \epsilon
    \,\|\bF(\bD\bfuMh)-\bF(\bD\bfu(t_{m})\|^2_2
    +c_\epsilon \Int\|\bF(\bD\bfu(t_{m})-\bF(\bD\bfu(s))\|^2_2\,ds
    \\
    &\quad + c_\epsilon\,h^{2}\Int\|\nabla\bF(\bD\bu(s))\|^2_2\,ds.
  \end{aligned}
\end{equation*}
We estimate now $\mathcal{C}_{1}$ by adding and subtracting $\bS(\bD\bu(s))$ in the first
entry
and get
\begin{equation*}
  \begin{aligned}
    \mathcal{C}_1
    &= \Int\bighskp{ \bS(\bD\bfu(s))-\bS(\bD\bfu(t_m))}{\bD\bfu(s) -\bD
      P_h\bfu(s)} \,ds
    \\
    &\quad + \Int\bighskp{ \bS(\bD\bfuMh)-\bS(\bD\bfu(s))}{\bD\bfu(s) -\bD
      P_h\bfu(s)} \,ds
    =:\mathcal{D}_1+\mathcal{D}_2.
  \end{aligned}
\end{equation*}
Then, by Proposition~\ref{lem:hammer} (ii) and Proposition~\ref{prop:Ph} (i) it follows 
\begin{equation*}
  \begin{aligned}
    | \mathcal{D}_1|
    &\leq c\Int\|\bF(\bD\bu(s))-\bF(\bD\bu(t_{m}))\|^2_2\,ds+c\Int\|\bF(\bD
    P_h\bu(s))-\bF(\bD\bu(s))\|^2_2\,ds
    \\
    &\leq c\Int\|\bF(\bD\bu(s))-\bF(\bD\bu(t_{m}))\|^2_2\,ds
    +c\,h^{2}\Int\|\nabla\bF(\bD\bu(s))\|^2_2\,ds. 
  \end{aligned}
\end{equation*}
The other term $\mathcal{D}_2$ is estimated in the following manner, by Proposition~\ref{lem:hammer}
(ii), by adding and subtracting $\bF(\bD\bu(t_{m}))$, and Proposition~\ref{prop:Ph} (i) 
\begin{equation*}
  \begin{aligned}
    |\mathcal{D}_2|
    &\leq \epsilon\Int\|\bF(\bD\bu(s))-\bF(\bD\bfuMh)\|^2_2\,ds+
    c_\epsilon\Int\|\bF(\bD P_h\bu(s))-\bF(\bD\bu(s))\|^2_2\,ds
    \\
    &\leq \epsilon\,\|\bF(\bD\bfuMh)-\bF(\bD\bu(t_{m}))\|^2_2+
    \epsilon\Int\|\bF(\bD\bu(s))-\bF(\bD\bu(t_{m}))\|^2_2\,ds
    \\
    &\qquad+c_\epsilon
    h^2\Int\|\nabla\bF(\bD\bu(s))\|^2_2\,ds.
  \end{aligned}
\end{equation*}
{\bf Estimate of $\mathcal{A}_2$}:
%
We now estimate the term $\mathcal{A}_2$, first by adding and subtracting
$\bD P_h\bu(s)$ in the second entry to get
\begin{equation*}
  \begin{aligned}
    \mathcal{A}_2 
    &= \Int\bighskp{ \bS(\bD\bfu(t_m))-\bS(\bD\bfu(s))}{\bD P_h\bfu(s) -\bD
      P_h\bfu(t_m)} \,ds
    \\
    &\quad + \Int\bighskp{ \bS(\bD\bfu(t_m))-\bS(\bD\bfu(s))}{\bD\bfuMh -\bD
      P_h\bfu(s)} \,ds
    =:\mathcal{E}_{1}+\mathcal{E}_{2}.
  \end{aligned}
\end{equation*}
The term $\mathcal{E}_{1}$ is estimated using Young's inequality, Proposition~\ref{lem:hammer} and 
Proposition~\ref{prop:Ph} (iii) 
\begin{equation*}
  \begin{aligned}
    |\mathcal{E}_{1}|&\le c\Int\int\limits_{\Omega}\phi'_{|\bD\bu(s)|}\Big(\big|\bD\bfu(t_{m})-\bD\bu(s)\big|\Big)\big|\bD
    P_h\bu(s)-\bD P_h\bfu(t_{m})\big|\,d\bx \,ds
    \\
    &\leq c \Int\int\limits_{\Omega}\phi_{|\bD\bu(s)|}
    \Big(\big|\bD\bfu(t_{m})-\bD\bu(s)\big|\Big)\,d\bx\, 
    ds
    \\
    &\quad +c \Int\int\limits_{\Omega}\phi_{|\bD\bu(s)|}\Big(\big| \bD
    P_h\bu(s)-\bD P_h\bfu(t_{m}) \big|\Big)\,{\rm d\bx \,ds}
    \\
    &\leq 
    c\Int\|\bF(\bD\bu(t_{m}))-\bF(\bD\bu(s))\|^2_2\,ds 
    +c\, h^2\Int\|\nabla\bF(\bD\bu(s))\|^2_2\,ds.
  \end{aligned}
\end{equation*}
The term $\mathcal{E}_{2}$ is estimated by adding and subtracting
$\bD\bu(s)$ and $\bD\bu(t_{m})$ in the second entry to get 
\begin{equation*}
  \begin{aligned}
    \mathcal{E}_{2}
   & = \Int\bighskp{ \bS(\bD\bfu(t_m))-\bS(\bD\bfu(s))}{\bD \bfu(s) -\bD
      P_h\bfu(s)} \,ds
    \\
    &\quad + \Int\bighskp{ \bS(\bD\bfu(t_m))-\bS(\bD\bfu(s))}{\bD\bfu(t_m) -\bD
      \bfu(s)} \,ds
    \\
    &\quad + \Int\bighskp{ \bS(\bD\bfu(t_m))-\bS(\bD\bfu(s))}{\bD\bfuMh -\bD
      \bfu(t_m)} \,ds
    =:\mathcal{E}_{2,1}+\mathcal{E}_{2,2}+\mathcal{E}_{2,3}.
  \end{aligned}
\end{equation*}
Then, Proposition~\ref{lem:hammer} and Proposition~\ref{prop:Ph} (i)  yield 
\begin{equation*}
  \begin{aligned}
    |\mathcal{E}_{2,1}| 
    &\leq c \Int\|\bF(\bD\bu(s))-\bF(\bD\bu(t_{m}))\|^{2}_2 \,ds+c \Int\|
    \bF(\bD\bu(s))-\bF(\bD P_h\bfu(s))\|^{2}_2 \,ds
    \\
    &\leq c \Int\|\bF(\bD\bu(s))-\bF(\bD\bu(t_{m}))\|_2^{2}
    \,ds +c\, h^{2}\Int\|\nabla\bF(\bD\bu(s))\|^{2}_2 \,ds,
  \end{aligned}
\end{equation*}
as well as 
\begin{equation*}
  \mathcal{E}_{2,2}\le c \Int\|\bF(\bD\bu(s))-\bF(\bD\bu(t_{m}))\|^{2}_2\,ds,
\end{equation*}
and 
\begin{equation*}
  \begin{aligned}
    |\mathcal{E}_{2,3}| 
    &\leq \epsilon \, \|\bF(\bD\bfuMh)-\bF(\bD\bu(t_{m}))\|^{2}_2 + c_{\epsilon}\Int\|\bF(\bD(s))-\bF(\bD\bu(t_{m}))\|^{2}_2 \,ds.
  \end{aligned}
\end{equation*}
Putting all these estimate together and choosing $\vep$ small enough,
we arrive at the estimate in Lemma~\ref{lem:ell}.
\end{proof}

\begin{lemma}\label{lem:f}
  It holds that
\begin{align*}
  &\Bigabs{ \;\Int \big(\ff(t_m) - {\bff}(s),\bfuMh-P_h\bu(t_m)\big )\,ds}
  \\
  &\le c\,\Big \|\ff(t_m) - \Int {\bff}\, ds \Big\|_2^2 + c\,
    \|\bfuMh-\bu(t_m)\|_2^2
    \\
    &\quad +c\,h^{2+4/p'} \Int \|\nabla ^2 \bu(s)\|_{\frac{6p}{p+4}}
    ^2\,ds + c\,h^{4/p'}\Big\|\bfu(t_{m})-\Int
    \bfu(s)\,ds\Big\|_{1,\frac{6p}{4+p}}^2.
\end{align*}
\end{lemma}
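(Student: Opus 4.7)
The plan is to split the test function as in Lemma~\ref{lem:time}, apply Cauchy--Schwarz and Young's inequality, and then handle the $P_h$-approximation error via the averaging trick (adding and subtracting time-means) in order to avoid pointwise-in-time $W^{2,\frac{6p}{4+p}}$-regularity of $\bu$, which we only have for a.e.\ $t\in I$.

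First, I would write
\begin{equation*}
  \bfuMh - P_h\bu(t_m) = \bigl(\bfuMh - \bu(t_m)\bigr) + \bigl(\bu(t_m) - P_h\bu(t_m)\bigr)
\end{equation*}
and use that $\dashint\nolimits_{I_m}(\ff(t_m)-\ff(s))\,ds$ can be pulled out of the inner product, since $\ff(t_m)-\dashint\nolimits_{I_m}\ff\,ds$ does not depend on $s$. Cauchy--Schwarz and Young's inequality then give
\begin{equation*}
  \Bigabs{\Int\bigl(\ff(t_m)-\ff(s),\bfuMh-\bu(t_m)\bigr)\,ds} \le c\,\Bignorm{\ff(t_m)-\Int\ff\,ds}_2^2 + c\,\|\bfuMh-\bu(t_m)\|_2^2,
\end{equation*}
and analogously for the other piece we obtain a term $c\,\|\ff(t_m)-\dashint_{I_m}\ff\,ds\|_2^2 + c\,\|\bu(t_m)-P_h\bu(t_m)\|_2^2$. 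It remains to control the second summand.

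The central step is to bound $\|\bu(t_m)-P_h\bu(t_m)\|_2^2$ without incurring $\|\nabla^2\bu(t_m)\|_{\frac{6p}{4+p}}$. Following the proof of Lemma~\ref{lem:time}, I would exploit $P_h\dashint_{I_m}\bu(s)\,ds=\dashint_{I_m}P_h\bu(s)\,ds$ and add and subtract the time mean to obtain
\begin{equation*}
  \|\bu(t_m)-P_h\bu(t_m)\|_2 \le \Bignorm{\bigl(I-P_h\bigr)\Bigl(\bu(t_m)-\Int\bu(s)\,ds\Bigr)}_2 + \Bignorm{\Int\bigl(\bu(s)-P_h\bu(s)\bigr)\,ds}_2.
\end{equation*}
For the first summand I apply Proposition~\ref{prop:Ph} (ii) with $\ell=1$ and $q=\frac{6p}{4+p}$; an easy check gives exponent $\ell+3(\tfrac12-\tfrac1q)=2/p'$, so the bound is $c\,h^{2/p'}\|\nabla\bu(t_m)-\dashint_{I_m}\nabla\bu(s)\,ds\|_{\frac{6p}{4+p}}$, which squared fits the claimed $h^{4/p'}$ term (absorbing the gradient norm into the full $W^{1,\frac{6p}{4+p}}$ norm). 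For the second summand I bring the $L^2$-norm inside the time average via Jensen/Fubini, then apply Proposition~\ref{prop:Ph} (ii) with $\ell=2$ and $q=\frac{6p}{4+p}$; here the exponent is $\ell+3(\tfrac12-\tfrac1q)=1+2/p'$, yielding $c\,h^{2+4/p'}\dashint_{I_m}\|\nabla^2\bu(s)\|_{\frac{6p}{4+p}}^2\,ds$ after squaring.

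The main obstacle is merely a notational one: one must verify the arithmetic of the Sobolev exponents $\ell+3(\tfrac12-\tfrac1q)$ for both $\ell=1$ and $\ell=2$, and confirm that the temporal averages can be pulled through $P_h$ (linearity) and through the $L^2(\Omega)$-norm (Minkowski/Jensen). Once these are in place, collecting the bounds and applying Young's inequality with a suitable absorption constant in front of $\|\bfuMh-\bu(t_m)\|_2^2$ yields the stated estimate.
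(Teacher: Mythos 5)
Your proposal is correct and follows the same route as the paper: split the test function as in \eqref{eq:split}, apply Young's inequality to obtain the three terms, and control $\|\bu(t_m)-P_h\bu(t_m)\|_2^2$ by the time-averaging trick combined with Proposition~\ref{prop:Ph}\,(ii) (the paper simply cites the identical estimate already established in the proof of Lemma~\ref{lem:time} rather than re-deriving it). The Sobolev exponent arithmetic you carry out ($2/p'$ for $\ell=1$ and $1+2/p'$ for $\ell=2$ with $q=\tfrac{6p}{4+p}$) is exactly what the paper uses.
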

\begin{proof}
  Using the splitting~\eqref{eq:split} and Young's inequality we get
\begin{align*}
  &\Bigabs{ \;\Int \big(\ff(t_m) - {\bff}(s),\bfuMh-P_h\bu(t_m)\big )\,ds}
  \\
  &\le c\,\Big \| \ff(t_m) - \Int {\bff}(s)\, ds \Big\|_2^2 + c\,
    \|\bfuMh-\bu(t_m)\|_2^2 + c\, \|\bu(t_m)-P_h\bu(t_m)\|_2^2\,.
\end{align*}
The last term was already treated in the proof of
Lemma~\ref{lem:time}. There we proved 
\begin{align*}
  &\|\bu(t_m)-P_h\bu(t_m)\|_2^2
  \\
  &\le c\,h^{2+4/p'} \Int \|\nabla ^2 \bu(s)\|_{\frac{6p}{p+4}}
    ^2\,ds + c\,h^{4/p'}\Big\|\bfu(t_{m})-\Int
    \bfu(s)\,ds\Big\|_{1,\frac{6p}{4+p}}^2\,,
\end{align*}
which yields the assertion.
\end{proof}

\begin{proof}[Proof of Proposition~\ref{prop:err}]
  The assertion follows from Lemma~\ref{lem:time}, Lemma~\ref{lem:ell}
  and Lemma~\eqref{lem:f}.
\end{proof}

\begin{proof}[Proof of Theorem~\ref{thm:theorem-parabolic}]
  We now prove the main result. Multiplying~\eqref{eq:error-estimate} by $\kappa$ and
  summing over $m=1,\dots,N$, for $N\le M$, we get
  \begin{equation*}
    \begin{aligned}
      &\|\bfu^N_h-\bfu(t_{N})\|^2_2+\sum_{m=1}^{N}
      \|\bF(\bD\bfuMh)-\bF(\bD\bfu(t_{m}))\|^2_2
      \\
      &\leq c\, h^{2}\sum_{m=1}^{N}
      \int\limits_{I_m}\|\nabla\bF(\bD\bu(s))\|^{2}_2\,ds + c\,
      \kappa\sum_{m=1}^{N}\Int\|\bF(\bD\bu(s))-\bF(\bD\bu(t_{m}))\|^2_2\,ds 
      \\
    &\quad+
    c\,\frac {h^{2+4/p'}}\kappa \sum_{m=1}^{N}\int\limits_{I_m} \|\nabla ^2
    \bu(s)\|_{\frac{6p}{p+4}} 
    ^2\,ds + c\,{h^{4/p'}} \sum_{m=1}^{N}\Big\|\bfu(t_{m})-\Int
    \bfu(s)\,ds\Big\|_{1,\frac{6p}{4+p}}^2
    \\
    &\quad  + c\, \kappa \sum_{m=1}^{N} \Big\|\ff(t_{m})-\Int
    \ff(s)\,ds\Big\|_{2}^2 + c \, \kappa \sum_{m=1}^{N}\|\bfuMh -\bu(t_{m})\|^2_2
    + c\, \|\bu_h^{0}-\bu_{0}\|_2^{2}\,.
    \end{aligned}
  \end{equation*}
  First we observe that by condition~\eqref{eq:compatbility}
  \begin{equation*}
    \frac{h^{2+4/p'}}{\kappa}\sum_{m=1}^{N}\int\limits_{I_m}
    \|\nabla ^2 \bu(s)\|_{\frac{6p}{p+4}}
    ^2\,ds\leq\frac{h^{4/p'}}{\kappa}h^{2}\int\limits_{0}^{T} 
    \|\nabla ^2 \bu(s)\|_{\frac{6p}{p+4}}^2\,ds\leq
    c\,h^{2}\int\limits_{0}^{T}\|\nabla ^2
    \bu(s)\|_{\frac{6p}{p+4}}^2\,ds. 
  \end{equation*}
  Next, by using Lemma~\ref{lem:Bochner-lemma} we have
  \begin{equation*}
   \kappa \sum_{m=1}^N\Int\|\bF(\bD\bu(s))-\bF(\bD\bu(t_{m}))\|^2_2\,ds\leq
    k^{2}\int\limits_{0}^{T}\Bignorm{\frac{\partial\bF(\bD\bu)}{\partial
      t}(s)}_2^{2}\,ds,
  \end{equation*}
  and also, by using again~\eqref{eq:compatbility},
  \begin{align*}
    \frac{h^{4/p'}}{\kappa}\kappa\sum_{m=1}^N
    \Big\|\bfu(t_{m})-\Int\bfu(s)\,ds\Big\|^2_{\frac{6p}{4+p}}&\leq
    \frac{h^{4/p'}}{\kappa}\kappa^{2}
    \int\limits_{0}^{T}\Bignorm{\frac{\partial\nabla\bfu}{\partial
        t}(s)}^2_{\frac{6p}{4+p}}\,ds
    \\
    &\leq c \,\kappa^{2}
    \int\limits_{0}^{T}\Bignorm{\frac{\partial\nabla\bfu}{\partial t}(s)}^2_{\frac{6p}{4+p}}\,ds.
  \end{align*}
  Moreover, Lemma~\ref{lem:Bochner-lemma} also yields
  \begin{align*}
    \kappa \sum_{m=1}^{N} \Big\|\ff(t_{m})-\Int
    \ff(s)\,ds\Big\|_{2}^2 \le c \, \kappa^{2}
    \int\limits_{0}^{T}\Bignorm{\frac{\partial\ff}{\partial t}(s)}^2_{2}\,ds.
  \end{align*}
  Hence we have, by using~\eqref{eq:regularity} and the fact that
  $\bu_h^0$ is the $L^2$-projection of $\bu_0$ 
  \begin{equation*}
    \begin{aligned}
      &\|\bfu^N_h-\bfu(t_{N})\|^2_2+\sum_{m=1}^N
      \|\bF(\bD\bfuMh)-\bF(\bD\bfu(t_{m}))\|^2_2
      \\
      &\leq c(h^{2} \!+ \!\kappa^{2})\!
      \int\limits_{0}^{T}\!\|\nabla\bF(\bD\bu(s))\|_2^{2}+
      \Bignorm{\frac{\partial\bF(\bD\bu)}{\partial
      t}(s)}_2^{2} + \Bignorm{\frac{\partial\ff}{\partial t}(s)}^2_{2}+
      \Bignorm{\frac{\partial\nabla\bfu}{\partial t}(s)}^2_{\frac{6p}{4+p}}\!
      +\|\nabla^2\bfu(s)\|^2_{\frac{6p}{4+p}} ds
      \\
      &\quad + c \, \kappa \sum_{m=1}^{N}\|\bfuMh
      -\bu(t_{m})\|^2_2+c\, h^2\|\nabla \bu_0\|_2^{2}\,.
    \end{aligned}
  \end{equation*}
  Then, if $\kappa>0$ is small enough such that $c\,\kappa<1$, we can absorb the last
  addendum in the sum from the right-hand side and obtain (using the regularity of $\bu$
  to bound the time integrals in the above formula)
  \begin{equation*}
    \begin{aligned}
      \|\bfu^N_h-\bfu(t_{N})\|^2_2+\sum_{m=1}^N
      &\|\bF(\bD\bfuMh)-\bF(\bD\bfu(t_{m}))\|^2_2
      \\
      &
\leq c(h^{2} \!+ \!\kappa^{2})
       +c \, \kappa \sum_{m=1}^{N-1}\|\bfuMh -\bu(t_{m})\|^2_2.
    \end{aligned}
  \end{equation*}
The discrete Gronwall lemma yields the
assertion.
\end{proof}
%
%
 \section{on the existence and uniqueness of regular solutions}
 In this section we prove Theorem~\ref{thm:MT}, i.e., the existence
 and uniqueness of regular solutions of~\eqref{eq:pfluid}, solely
 based on appropriate assumptions on the data. To this end we proceed
 as in~\cite{br-plasticity} and treat a perturbed problem, obtained by
 adding to the tensor field $\bS$ with $(p,\delta)$-structure a linear
 perturbation. We use this approximation to justify the
 computations that follow and to avoid some technical
 problems related with the case $p \in (1,6/5)$ and the lack of an
 evolution triple in this range. From now on we restrict ourselves to the
 case that $\bS$ has $(p,\delta)$-structure some $p \in (1,2]$,
 $\delta \in [0,\infty)$. Let $\bff\in L^{p'}(I\times\Omega)$ and
 $\bu_0\in L^2(\Omega)$ be given.

\subsection{The perturbed problem and some global regularity in the time variable}   
%

We have the following result on existence and uniqueness of
time-regular solutions of the perturbed problem.
\begin{proposition}
\label{thm:existence_perturbation}
Let the tensor field $\bS$ have $(p,\delta)$-structure for some $p\in(1,2]$, and
$\delta\in[0,\infty)$ and let $\bff\in L^{p'}(I;L^{p'}(\Omega))\cap
W^{1,2}(I;L^2(\Omega))$ and $\bu_{0}\in W^{1,2}_0(\Omega)\cap W^{2,2}(\Omega)$ with
$\divo(\bS^\epsilon(\bD\bu_0)) \in L^2(\Omega)$ be given. Then,  the perturbed problem
\begin{equation}
  \label{eq:eq-e}
  \begin{aligned}
    \frac{\partial\bue}{\partial t}-\divo \bfS^\vep (\bfD\bue)&=\bff\qquad&&\text{in }I\times\Omega\,,
    \\
    \bue &= \bfzero &&\text{on } I\times\partial \Omega\,,
    \\
    \bue(0)&=\bu_0&&\text{in }\Omega\,,
  \end{aligned}
\end{equation}
where 
\begin{equation*}
  \bS^\epsilon(\bQ):=\epsilon \,\bQ +
  \bS(\bQ),\qquad\text{with }\epsilon>0\,,
\end{equation*}
possesses a unique
time-regular solution $\bue$
, i.e., $\bue\in W^{1,\infty}(I;L^{2}(\Omega))\cap
W^{1,2}(I;W^{1,2}_{0}(\Omega))$ with
$\bF(\bD\bue) \in W^{1,2}(I;L^{2}(\Omega))$ satisfies for all 
$\psi \in C_0^\infty (I)$ and all $\bv\in W^{1,2}_0(\Omega)$
\begin{align}
  \label{eq:weak-eps}
  \int\limits _0^T\Bighskp{\frac {\partial\bfu}{\partial t}(t)
  }{\bfv}\psi (t) \, dt +\int\limits _0^T\hskp{\bS^\vep(\bD\bfu(t))}{\bD\bfv}\psi (t)\,dt =\int\limits
  _0^T\hskp{\bff(t)}{\bfv}\, \psi(t)\, dt\,,
\end{align}
and $\bue (0)=\bu_0$ in $W^{1,2}_0(\Omega)$. 
In addition, the solution $\bue$ satisfies for a.e.~$t \in I$ the
estimates\footnote{Note that
    $c(\delta)$ only indicates that the constant $c$ depends on $\delta$ and will satisfy
    $c(\delta)\le c(\delta_0)$ for all $\delta\le \delta_0$.  } 
\begin{gather}
  \label{eq:main-apriori-estimate2}
  \begin{aligned}
    &\frac{1}{2}\|\bue(t)\|_2^2+\int\limits_0^t \int\limits_\Omega \epsilon\,
    |\nabla\bue|^2+\phi(|\bD\bue|)\,d\bx\,ds
    \\[-2mm]
    &\hspace{2cm}\leq
    \frac{1}{2}\|\bu_0\|_2^2+c(\delta,\Omega)\int\limits_0^T\|\bff(s)\|_{p'}^{p'}\,ds\,,
  \end{aligned}
  \\[2mm]
  \label{eq:estimate-partial-t-F}
  \begin{aligned}
    &\Bignorm{\frac{\partial\bue}{\partial t}(t)}_2^2+\int\limits_0^t\epsilon\,
    \Bignorm{\frac{\partial\bD\bue}{\partial t}(s)}_2^{2}
    +\Bignorm{\frac{\partial\bF(\bD\bue)}{\partial t}(s)}^2_2\,ds
    \\[-1mm]
    &\leq
    C(\delta,\Omega)\Big(\epsilon\|\bu_0\|_{2,2}^{2}+\|\divo(\bS(\bD\bu_0))\|_2^2+\int\limits_0^T
    \|\bff(s)\|^2_2+\Bignorm{\frac{\partial \bff}{\partial
        t}(s)}_2^2\,ds\Big)\,.
  \end{aligned}
\end{gather}
The estimate~\eqref{eq:main-apriori-estimate2} and
\eqref{eq:estimate-partial-t-F} imply that
$\bue\in L^{\infty}(I;L^2(\Omega))$,
$\bF(\bD\bue)\in L^{2}(I\times \Omega)$; and
$\bue\in W^{1,\infty}(I;L^2(\Omega))$,
$\bF(\bD\bue)\in W^{1,2}(I;L^2(\Omega))$, resp., with bounds
independent of $\epsilon>0$.
\end{proposition}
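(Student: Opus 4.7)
The plan is to construct $\bue$ via a Galerkin approximation, derive the bounds \eqref{eq:main-apriori-estimate2}--\eqref{eq:estimate-partial-t-F} on the approximating sequence with constants independent of $N$ (and of $\epsilon$ in the way indicated), and pass to the limit by monotonicity. Choose a smooth basis $\{\mathbf{w}_k\}_{k\in\setN}\subset W^{2,2}(\Omega)\cap W^{1,2}_0(\Omega)$ (for instance the eigenfunctions of the Dirichlet Laplacian) and set $\bue^N(t):=\sum_{k=1}^N c^N_k(t)\,\mathbf{w}_k$. Projecting \eqref{eq:eq-e} onto $\mathrm{span}\{\mathbf{w}_1,\ldots,\mathbf{w}_N\}$ yields a Carath\'eodory ODE system in $(c^N_1,\ldots,c^N_N)$ whose right-hand side is continuous in its arguments; local existence is immediate, and the energy estimate below extends the solution to all of $I$.

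For \eqref{eq:main-apriori-estimate2}, I would test the Galerkin equation with $\bue^N(t)$ and use Proposition~\ref{lem:hammer}~(i) to identify $\bS(\bD\bue^N)\cdot\bD\bue^N\sim\phi(|\bD\bue^N|)$, combined with Young's inequality on $(\bff,\bue^N)$ in the $L^{p'}$--$L^p$ duality. For \eqref{eq:estimate-partial-t-F} I would differentiate the Galerkin equation in $t$ (legitimate because $\bue^N$ is smooth in $t$) and test with $\partial_t\bue^N$. The linear perturbation contributes $\epsilon\|\partial_t\bD\bue^N\|_2^2$, while the chain rule gives $\partial_t\bS(\bD\bue^N)\cdot\partial_t\bD\bue^N=\sum_{k,l,m,n}\partial_{kl}S_{mn}(\bD\bue^N)\,\partial_t(\bD\bue^N)_{kl}\,\partial_t(\bD\bue^N)_{mn}$, which by the argument behind \eqref{eq:p-F} in Proposition~\ref{prop:equivalence} (only the chain rule is used there, and it applies equally well to $\partial_t$) is equivalent to $|\partial_t\bF(\bD\bue^N)|^2$. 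Integrating in time yields \eqref{eq:estimate-partial-t-F} once I control $\partial_t\bue^N(0)=P_N\bigl(\bff(0)+\divo\bS^\epsilon(\bD\bu_0)\bigr)$ in $L^2$: this is where the compatibility hypothesis $\divo\bS(\bD\bu_0)\in L^2(\Omega)$, the bound $\|\epsilon\Delta\bu_0\|_2\le\epsilon\|\bu_0\|_{2,2}$, and the embedding $W^{1,2}(I;L^2)\hookrightarrow C(\overline I;L^2)$ (which gives $\|\bff(0)\|_2<\infty$) all enter.

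The uniform estimates then furnish subsequences with $\bue^N\rightharpoonup\bue$ weakly-$*$ in $W^{1,\infty}(I;L^2)$, weakly in $W^{1,2}(I;W^{1,2}_0)$, and $\bF(\bD\bue^N)\rightharpoonup\mathbf{G}$ weakly in $W^{1,2}(I;L^2)$. Aubin--Lions upgrades this to strong convergence $\bue^N\to\bue$ in $L^2(I\times\Omega)$. Strict monotonicity of $\bS^\epsilon$ together with Minty's trick identifies the weak limit of $\bS^\epsilon(\bD\bue^N)$ as $\bS^\epsilon(\bD\bue)$, so that $\bue$ satisfies \eqref{eq:weak-eps} and $\mathbf{G}=\bF(\bD\bue)$ by Proposition~\ref{lem:hammer}~(i). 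The continuity $\bue\in C(\overline I;W^{1,2}_0)$ (a consequence of the time-derivative estimate) together with $\bue^N(0)=P_N\bu_0\to\bu_0$ in $W^{1,2}_0$ yields attainment of the initial datum. Uniqueness is standard: for two solutions the difference, tested against itself, satisfies $\tfrac12\tfrac{d}{dt}\|\bue_1-\bue_2\|_2^2+\epsilon\|\nabla(\bue_1-\bue_2)\|_2^2+(\bS(\bD\bue_1)-\bS(\bD\bue_2),\bD\bue_1-\bD\bue_2)\le 0$, and monotonicity plus Gronwall conclude.

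The main obstacle I anticipate is the time-differentiated estimate, which hinges on reading Proposition~\ref{prop:equivalence} with $\partial_t$ in place of a spatial derivative and on the $L^2$-bound for $\partial_t\bue^N(0)$; both are routine once one believes the formal calculation, but they are the place where the perturbation $\epsilon\bQ$ and the compatibility hypothesis on $\bu_0$ are essential. Without the $\epsilon$ term, the Galerkin time-derivative bound would not produce $L^2(I;W^{1,2}_0)$-control on $\partial_t\bue^N$, and passing to the limit in $\bS(\bD\bue^N)$ would require substantially more delicate arguments; with it, the whole procedure reduces to the standard monotone-operator scheme.
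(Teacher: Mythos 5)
Your proposal follows the same route as the paper: Galerkin approximation, energy estimate by testing with $\bue^N$, time-derivative estimate by differentiating the Galerkin system in $t$ and testing with $\partial_t\bue^N$, and passage to the limit plus Gronwall-based uniqueness. The paper's proof is a terse sketch with references to \cite{bdr-7-5,dr-7-5} for exactly the details you fill in (the equivalence of $\partial_t\bS(\bD\bue)\cdot\partial_t\bD\bue$ with $|\partial_t\bF(\bD\bue)|^2$, the role of the compatibility condition in bounding $\partial_t\bue^N(0)$, and the Minty identification of the limit); the only small imprecision in your write-up is that $\partial_t\bue^N(0)=P_N\bigl(\bff(0)+\divo\bS^\vep(\bD P_N\bu_0)\bigr)$ involves $\bD P_N\bu_0$ rather than $\bD\bu_0$, so one must also ensure the projected initial data converge strongly enough for the $\divo\bS^\vep$ term to stay bounded, a standard technicality both you and the paper gloss over.
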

\begin{proof}
  The proof is based on a standard Galerkin approximation. The existence of the Galerkin
  approximations follows from the standard theory of systems of ordinary differential
  equations. Estimate~\eqref{eq:main-apriori-estimate2} is proved on the Galerkin level by
  testing with the Galerkin approximation. Estimate~\eqref{eq:estimate-partial-t-F} is
  obtained by differentiating the Galerkin equations with respect to time and testing with
  the time derivative of the Galerkin approximation. We refer to~\cite{bdr-7-5,dr-7-5} for
  more details. Note that the regularity is enough to justify all calculations and to
  employ the Gronwall lemma to prove uniqueness.
\end{proof}
\begin{remark}\label{rem:weak}
  Note that by the fundamental theorem on the calculus of variations
  the weak formulation \eqref{eq:weak-eps} is equivalent to
\begin{align}\label{eq:weak-eps1}
  \Bighskp{\frac {\partial\bfu}{\partial t}(t)
  }{\bfv} +\hskp{\bS^\vep(\bD\bfu(t))}{\bD\bfv} =
  \hskp{\bff(t)}{\bfv}\qquad \textrm{ a.e.}~t \in I,\  \forall \bv \in W^{1,2}_0(\Omega)\,.
\end{align}  
\end{remark}
In order to prove existence and uniqueness of regular solutions to~\eqref{eq:pfluid}, by
taking the limit $\epsilon\to0^{+}$, we need to prove further regularity for the solution $\bue$,
namely on the second order spatial derivatives.
The regularity in the spatial variables requires an ad hoc treatment (localization) for the
Dirichlet boundary value problem.  To do this we adapt the argument
in~\cite{br-plasticity} for the steady problem, to handle the parabolic problem. We sketch
the relevant steps, pointing out the main new aspects which are present in the
time-dependent case.
\begin{remark}
  In the space periodic case the requested regularity for the spatial derivatives can be
  obtained simply by testing (again the Galerkin approximations) with $-\Delta\bu$, as
  in~\cite{bdr-7-5} to prove for a.e.~$t \in I$ the inequality
  \begin{equation*}
   \frac{1}{2}
   \|\nabla\bue(t)\|_2^{2}+\int\limits_{0}^{t}\epsilon\|\Delta\bue(s)\|_2^{2}+\|\nabla\bF(\bD\bue(s))\|_2^{2}\,ds
   \leq\frac{1}{2}\|\nabla\bu_{0}\|_2^{2}+c\int\limits_{0}^{T}\|\bff(s)\|_{p'}^{p'}\,ds, 
  \end{equation*}
  with $c$ depending only on $\delta$ and $\Omega$.
\end{remark}
%

%
\subsection{Description and properties of the boundary}
\label{sec:bdr} 
$\hphantom{}$
We assume that the boundary $\partial\Omega$ is of class $C^{2,1}$, that
is for each point $P\in\partial\Omega$ there are local coordinates such
that in these coordinates we have $P=0$ and $\partial\Omega$ is locally
described by a $C^{2,1}$-function, i.e.,~there exist
$R_P,\,R'_P \in (0,\infty),\,r_P\in (0,1)$ and a $C^{2,1}$-function
$a_P:B_{R_P}^{2}(0)\to B_{R'_P}^1(0)$ such that
\begin{itemize}
\item   [\rm (b1) ] $\bx\in \partial\Omega\cap (B_{R_P}^{2}(0)\times
  B_{R'_P}^1(0))\ \Longleftrightarrow \ x_3=a_P(x_1,x_2)$,
\item   [\rm (b2) ] $\Omega_{P}:=\{(x,x_{3})\fdg x=(x_1,x_2)^\top
 \in  B_{R_P}^{2}(0),\ a_P(x)<x_3<a_P(x)+R'_P\}\subset \Omega$, 
\item [\rm (b3) ] $\nabla a_P(0)=\bfzero,\text{ and }\forall\,x=(x_1,x_2)^\top
  \in B_{R_P}^{2}(0)\quad |\nabla a_P(x)|<r_P$,
\end{itemize}
where $B_{r}^k(0)$ denotes the $k$-dimensional open ball with center
$0$ and radius ${r>0}$.  Note that $r_P $ can be made arbitrarily
small if we make $R_P$ small enough.  In the sequel we will also use,
for $0<\lambda<1$, the  scaled open sets $\lambda\,
\Omega_P\subset \Omega_P$, defined as follows
\begin{equation*}
  \lambda\, \Omega_P:=\{(x,x_{3})\fdg x=(x_1,x_2)^\top
 \in
  B_{\lambda R_P}^{2}(0),\ a_P(x)<x_3<a_P(x)+\lambda R_P'\}.
\end{equation*}
To localize near  $\partial\Omega\cap \partial\Omega_P$, for $P\in\partial\Omega$, we fix smooth
functions $\xi_{P}:\setR^{3}\to\setR$ such that 
\begin{itemize}
\item [$\rm (\ell 1)$] $\chi_{\frac{1}{2}\Omega_P}(\bx)\leq\xi_P(\bx)\leq
  \chi_{\frac{3}{4}\Omega_P}(\bx)$,
\end{itemize}
where $\chi_{A}(\bx)$ is the indicator function of the measurable set
$A$. 
For the remaining interior estimate we  localize by a smooth function
${0\leq\xi_{00}\leq 1}$ with $\spt \xi_{00}\subset\Omega_{00}$,
where $\Omega_{00}\subset \Omega$ is an open set such that
$\dist(\partial\Omega_{00},\,\partial\Omega)>0$.  
Since the boundary $\partial\Omega $ is compact, we can use an appropriate
finite sub-covering which, together with the interior estimate, yields
the global estimate.

Let us introduce the tangential derivatives near the boundary. To
simplify the notation we fix $P\in \partial\Omega$, $h\in (0,\frac{R_P}{16})$,
and simply write $\xi:=\xi_P$, $a:=a_P$. We use the standard notation
$\bx =(x',x_3)^\top$ and denote by $\be^i,i=1,2,3$ the canonical
orthonormal basis in $\setR^3$. In the following lower-case Greek
letters take values $1,\, 2$. For a function $g$ with $\spt
g\subset\spt\xi$ we define for $\alpha=1,2$ tangential translations:
\begin{equation*}
\begin{aligned}
  \trap g(x',x_3) = g_{\tau _\alpha}(x',x_3)&:=g\big (x' +
  h\,\be^\alpha,x_3+a(x'+h\,\be^\alpha)-a(x')\big )\,,
\end{aligned}
\end{equation*}
tangential 
differences $\Delta^+ g:=\trap g-g$, and tangential divided
differences 
%
  $\difp g:= h^{-1}\Delta^+ g$.  
%
It holds that, if $ g \in W^{1,1}(\Omega)$, then we have for $\alpha=1,2$
\begin{align} 
  \label{eq:1}
  \difp g \to \td g=\partial _{\tau_\alpha}g :=\partial_\alpha g +\partial_\alpha
  a\, \partial_3 g  \qquad \text{ as } h\to 0,
\end{align} 
almost everywhere in $\spt\xi$, (cf.~\cite[Sec.~3]{mnr2}). 
Moreover, uniform $L^q$-bounds
for $\difp g$ imply that $\partial _\tau g $ belongs to $L^q(\spt\xi)$. More precisely, if
we define, for $0<h<R_P$
\begin{equation*}
  \Omega_{P,h}=\left\{\bx\in \Omega_P\fdg x'\in B^2_{R_P-h}(0)\right\},
\end{equation*}
and if $f\in W^{1,q}_\loc(\setR^3)$, then 
\begin{equation*}
  \int_{\Omega_{P,h}}|d^+f|^q\,d\bx\leq c\int_{\Omega_{P}}|\partial_\tau f|^q\,d\bx.
\end{equation*}
Moreover, if $d^{+}f\in L^q(\Omega_{P,h_0})$, for all $0<h_0<R_P$ and all $0<h_0$ and if
 \begin{equation*}
\exists\,c_1>0:\quad   \int_{\Omega_{P,h_0}}|d^{+}f|^q\,d\bx\leq c_1\qquad
\forall\,h_0\in(0,R_P)\text{ and } \forall\,h\in(0,h_0),
 \end{equation*}
then $\partial_{\tau}f\in L^q(\Omega_P)$ and 
\begin{equation*}
  \int_{\Omega_{P}}|\partial_{\tau}f|^q\,d\bx\leq c_1.
\end{equation*}
%
For simplicity we denote $\nabla a:=(\partial_1a,\partial_{2}a,
0)^\top$.
%
The following variant of formula of integration by parts will be often used.
\begin{lemma}
  \label{lem:TD3}
  Let $\spt g\cup\spt f\subset\spt\xi=\spt\xi_P$ and $0<h<\frac{R_P}{16}$. Then
  \begin{equation*}
    \intO f\tran g \, d\bx =\intO\trap f g\, d\bx.
  \end{equation*}
  Consequently, $\intO f\difp g \, d\bx= \intO(\difn f )g\, d\bx$.
  Moreover, if in addition $f$ and $g$ are smooth enough and at least
  one vanishes on $\partial\Omega$, then 
\begin{equation*} \intO f\td g \, d\bx= -\intO(\td f )g\,
    d\bx.
  \end{equation*}
\end{lemma}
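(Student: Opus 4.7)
The plan is to establish the three assertions in sequence: first the substitution identity relating $\trap$ and $\tran$, then the divided-difference identity as a direct corollary, and finally the integration-by-parts formula for the tangential derivative $\td$.

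For the substitution identity I would view the tangential translation as a global map $F=F^\alpha_h\colon\setR^3\to\setR^3$ given by $F(x',x_3):=(x'+h\be^\alpha,\,x_3+a(x'+h\be^\alpha)-a(x'))$, so that $\trap g=g\circ F$ and $\tran g=g\circ F^{-1}$ with $F^{-1}(y',y_3)=(y'-h\be^\alpha,\,y_3+a(y'-h\be^\alpha)-a(y'))$. A direct computation shows that the only nontrivial off-diagonal entries of $DF$ sit below the diagonal in the third row, so $\det DF\equiv 1$ and $F$ is a $C^{2,1}$-diffeomorphism of $\setR^3$. The hypotheses $0<h<R_P/16$ and $\spt f\cup\spt g\subset\spt\xi_P\subset\tfrac34\Omega_P$, combined with $|\nabla a|<r_P<1$, guarantee that $F(\spt f)$ and $F(\spt g)$ remain inside $\Omega_P\subset\Omega$, so zero-extension of $f$ and $g$ off their supports is harmless. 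The substitution $y=F(x)$ then yields
\[
\int_\Omega f(x)\,\tran g(x)\, dx=\int f(x)\,g(F^{-1}(x))\, dx=\int f(F(y))\,g(y)\, dy=\int_\Omega \trap f(y)\,g(y)\, dy.
\]

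For the divided-difference identity I would swap the roles of $f$ and $g$ in the identity just proved, obtaining $\int f\,\trap g\, dx=\int\tran f\cdot g\, dx$, and insert this into the definition of $\difp$:
\[
\int_\Omega f\,\difp g\, dx=\frac1h\int_\Omega f(\trap g-g)\, dx=\frac1h\int_\Omega(\tran f-f)g\, dx=\int_\Omega(\difn f)\,g\, dx.
\]

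For the tangential-derivative formula I would expand $\td g=\partial_\alpha g+(\partial_\alpha a)\,\partial_3 g$ and integrate by parts in $x_\alpha$ and in $x_3$ term by term. The decisive observation is that $a=a(x')$ is independent of $x_3$, so $\partial_3(\partial_\alpha a)=0$, and hence
\[
\int_\Omega f\,\td g\, dx=-\int_\Omega\bigl(\partial_\alpha f+(\partial_\alpha a)\,\partial_3 f\bigr)g\, dx+\mathcal{B}=-\int_\Omega(\td f)\,g\, dx+\mathcal{B},
\]
where $\mathcal{B}$ collects the boundary contributions. Since $\spt f,\spt g\subset\spt\xi_P\subset\tfrac34\Omega_P$, the only part of $\partial\Omega_P$ on which $fg$ can be nonzero is the graph $\{x_3=a(x')\}$, which is precisely the portion of $\partial\Omega$ in the chart; the assumption that one of $f,g$ vanishes on $\partial\Omega$ then gives $\mathcal{B}=0$. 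The only real bookkeeping is to confirm $\det DF=1$ and that the translated supports stay inside $\Omega_P$ for $h<R_P/16$; no deeper obstacle appears.
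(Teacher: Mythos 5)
Your proof is correct. The paper states Lemma~\ref{lem:TD3} without proof (it is treated as a standard technical fact, with the general translation machinery credited to~\cite[Sec.~3]{mnr2}), so there is no author's argument to compare against; you have simply filled in a complete and sound justification.

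A few small remarks. Your change-of-variables argument for the first identity is the natural one: $\trap g = g\circ F$ with $F(x',x_3)=(x'+h\be^\alpha,\,x_3+a(x'+h\be^\alpha)-a(x'))$, $DF$ is unipotent lower triangular so $\det DF\equiv 1$, and for $h<R_P/16$ one checks that $F^{\pm1}$ maps $\tfrac34\Omega_P$ into $\Omega_P$ because the horizontal radius grows by at most $h<R_P/16$ while $F$ leaves the quantity $x_3-a(x')$ invariant; the bound $|\nabla a|<r_P$ is not actually needed for this inclusion, only the smallness of $h$. The second identity follows exactly as you say by symmetrizing the first. For the third identity, your key observation that $\partial_\alpha a$ is independent of $x_3$ is exactly what makes the $x_3$-integration by parts clean, and since $\spt(fg)\subset\tfrac34\Omega_P$ the only boundary contribution lies on the graph piece $\partial\Omega\cap\overline{\Omega_P}$, where the hypothesis on $f$ or $g$ kills it. As a side note, a direct computation of the outward normal $\mathbf n=(\nabla a,-1)/\sqrt{1+|\nabla a|^2}$ on the graph shows that the combined boundary weight $n_\alpha+(\partial_\alpha a)n_3$ vanishes identically, so the integration-by-parts formula for $\partial_\tau$ actually holds without the vanishing assumption; the lemma's extra hypothesis is a harmless convenience, and your invocation of it is perfectly valid.
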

\subsection{A first regularity result in space}
We start proving spatial regularity for the perturbed problem in the non-degenerate case
$\delta>0$. The estimates proved in this intermediate step are uniform with respect to
$\vep>0$ and $\delta>0$ in: a) the interior and b) in the case of tangential derivatives;
estimates depend on $\vep, \delta >0$ in the normal direction. Nevertheless, this allows
later on to use the equations pointwise to prove in a different way estimates independent
of $\epsilon, \delta >0$ even near the boundary. Thus, we can pass to the limit with
$\vep \to 0$ to treat the original problem in the non-degenerate case. Finally, the
degenerate case is treated by a suitable approximation using that the estimates are
independent of $\delta>0$.

We observe that by using a translation method, the result is
proved directly for solutions and not anymore for the Galerkin approximations.
\begin{proposition}  
  \label{prop:JMAA2017-1}
  Let the tensor field $\bS$ in~\eqref{eq:eq-e} have $(p,\delta)$-structure for some $p \in
  (1,2]$ and $\delta \in (0,\infty)$, and let $\bF$ be the associated tensor field to
  $\bS$. Let $\Omega\subset\setR^3$ be a bounded domain with $C^{2,1}$ boundary and let
  $\bu_0\in W^{1,2}_0(\Omega)$ and $\ff\in L^{p'}(I\times
  \Omega)$. Then, the unique time-regular
  solution $\bue$ of the approximate problem~\eqref{eq:eq-e} satisfies for a.e.~$t\in I$
  \begin{align}
    \begin{aligned}
      &\|\xi_0^2\nabla \bue(t)\|_2^2+\int\limits_0^t \int\limits_{\Omega} \epsilon \,\xi_0^2 \abs{\nabla
        ^2\bue}^2+\xi_0^2 \abs{\nabla \bF(\bD\bue)}^2\,d\bx\,ds 
      \\
      &\le c      (\|\bu_0\|_{1,2},\norm{\ff}_{L^{p'}(I\times\Omega)},\norm{\xi_0}_{2,\infty},\delta) \,,
      \\[3mm]
      &\|\xi_P^2\td\bue(t)\|_2^2+\int\limits_0^t \int\limits_{\Omega} \epsilon \,\xi^2_P
      \abs{\td \bD\bue}^2+\xi^2_P \abs{\td \bF(\bD\bue)}^2\,d\bx\,ds
      \\
      &\le c
      (\|\bu_0\|_{1,2},\norm{\ff}_{L^{p'}(I\times\Omega)},\norm{\xi_P}_{2,\infty},\norm{a_P}_{C^{2,1}},\delta)
      \,. \hspace*{-3mm}
    \end{aligned} \label{eq:est-eps}
  \end{align}
  Here $\xi_{0}(\bx)$ is a cut-off function with support in the interior of
  $\Omega$ and, for arbitrary $P\in \partial \Omega$, the tangential
  derivative 
  is defined locally in $\Omega_P$ by~\eqref{eq:1}.

Moreover, there exists a constant $C_1>0$
  such that\footnote{Recall that $c(\delta^{-1})$ indicates a possibly critical dependence
    on $\delta$ as $\delta\to 0$.} for a.e. $t\in I$
  \begin{equation}
    \label{eq:est-eps-1}
    \begin{aligned}
      &\|\xi_P^2\partial_3\bue(t)\|_2^2+\int\limits_0^t \int\limits_{\Omega} \epsilon \,\xi^2_{P}
      \abs{\partial_3 \bD\bue}^2+\xi^2_{P} \abs{\partial _3 \bF(\bD\bue)}^2\,d\bx\,ds
      \\
      & \le c
      (\|\bu_0\|_{1,2},\,\norm{\ff}_{L^{p'}(I\times \Omega)},\norm{\xi_{P}}_{2,\infty},\norm{a_{P}}_{C^{2,1}},\delta^{-1},
      \vep^{-1},C_1),
    \end{aligned}
  \end{equation}
  provided that in the local description of the boundary there holds $r_P<C_1$ in $(b3)$,
  where $\xi_{P}(\bx)$ is a cut-off function with support in $\Omega_P$.
\end{proposition}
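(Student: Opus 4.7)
The plan follows the three-step strategy of~\cite{br-plasticity} for the steady problem, adapted to the parabolic setting: an interior estimate by a localized $H^2$-type test, a tangential bound near the boundary by the difference-quotient method, and the normal bound extracted afterwards from the PDE used pointwise. Since the solution $\bue$ furnished by Proposition~\ref{thm:existence_perturbation} already has $\partial_t\bue\in L^2(I;L^2(\Omega))$ and $\bue\in L^2(I;W^{1,2}_0(\Omega))$, the test functions below are admissible directly in the weak formulation of Remark~\ref{rem:weak}, with no need to return to the Galerkin level. For the interior estimate I would plug $\bv=-\sum_{j=1}^3\partial_j(\xi_0^2\partial_j\bue)$ into~\eqref{eq:weak-eps1}: the time term becomes $\tfrac12\tfrac{d}{dt}\|\xi_0\nabla\bue\|_2^2$, the $\epsilon$-part gives $\epsilon\|\xi_0\nabla^2\bue\|_2^2$, and by Proposition~\ref{prop:equivalence} the nonlinear part yields $c\sum_j\|\xi_0\partial_j\bF(\bD\bue)\|_2^2$. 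The commutators with $\nabla\xi_0$ carry at most one derivative of~$\bue$ and are absorbed by Young's inequality, the shift change in Proposition~\ref{lem:hammer}(ii), and the basic energy bound~\eqref{eq:main-apriori-estimate2}; integrating in~$t$ yields the first line of~\eqref{eq:est-eps}.

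For the tangential estimate, at a boundary point $P$ with local chart $a_P$ and cut-off $\xi_P$, I would test \eqref{eq:weak-eps1} with $\bv=-d^-(\xi_P^2\,d^+\bue)$ for $0<h<R_P/16$. This is admissible because the tangential translation along the graph of $a_P$ preserves the vanishing of $\bue$ on $\partial\Omega$. Using Lemma~\ref{lem:TD3} to transfer one $d^+$, the commutation $d^+\partial_t=\partial_t d^+$, and a discrete mean-value chain rule combined with Proposition~\ref{lem:hammer}(i), the time term becomes $\tfrac12\tfrac{d}{dt}\|\xi_P\,d^+\bue\|_2^2$, and the elliptic part produces $\epsilon\|\xi_P\,d^+\bD\bue\|_2^2+c\|\xi_P\,d^+\bF(\bD\bue)\|_2^2$ up to lower-order pieces. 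The bookkeeping to watch is that $\partial_{\tau_\alpha}=\partial_\alpha+\partial_\alpha a\,\partial_3$, so the tangential difference generates terms involving $\nabla a$ and $D^2 a$; these are controlled by $a_P\in C^{2,1}$, the uniform bound~\eqref{eq:main-apriori-estimate2}, and the shift lemma to pull each commutator into Young-absorbable form. Passing $h\to0$ along~\eqref{eq:1} and integrating in time gives the second line of~\eqref{eq:est-eps}.

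For the normal estimate~\eqref{eq:est-eps-1} I would use the pointwise equation from Remark~\ref{rem:weak},
\begin{equation*}
-\epsilon\Delta\bue-\divo\bS(\bD\bue)=\ff-\partial_t\bue\qquad\text{a.e.~in }I\times\Omega\,,
\end{equation*}
and rewrite all spatial derivatives via $\partial_\alpha=\partial_{\tau_\alpha}-\partial_\alpha a\,\partial_3$ for $\alpha=1,2$. Every second-derivative contribution other than the $\partial_3^2$-block is already controlled in $L^2(\xi_P^2\,d\bx\,dt)$ by the tangential estimate; collecting the remaining pieces yields, at each time, a linear system for $\partial_3^2\bue$ whose coefficient matrix has the form $\epsilon\,I+B(\nabla a,\partial_{kl}S(\bD\bue))$, with off-diagonal entries of size at most $|\nabla a|<r_P$. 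The main obstacle, and the source of the degradation as $\epsilon\to0$ or $\delta\to0$, is the uniform invertibility of this matrix: the nonlinear block is bounded only by $C_0(\delta+|\bD\bue|)^{p-2}$, which for $p\le 2$ can degenerate both in $\delta$ and in large $|\bD\bue|$, so the coercivity must be driven by the $\epsilon$-part; once $r_P<C_1$ is chosen small enough that the $\nabla a$-perturbation does not destroy the diagonal dominance, a perturbation argument produces an inverse bounded by $c(\epsilon^{-1},\delta^{-1})$. Solving for $\partial_3^2\bue$, multiplying by $\xi_P^2$, and integrating in $(t,\bx)$ produces~\eqref{eq:est-eps-1}; a finite sub-covering of $\partial\Omega$ together with the interior piece then yields the global conclusion.
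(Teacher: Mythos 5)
Your interior and tangential steps are essentially the paper's: the tangential test function $\bv=\difn(\xi_P^2\difp\bue)$ is exactly what the paper uses, together with Lemma~\ref{lem:TD3}, the commutation $\difp\partial_t=\partial_t\difp$, and the shift-change and Young's inequality machinery of Proposition~\ref{lem:hammer}. One technical caveat on the interior piece: the direct test with $\bv=-\sum_j\partial_j(\xi_0^2\partial_j\bue)$ is not yet known to lie in $W^{1,2}_0(\Omega)$ at this point (that is what one is in the course of proving), so the paper performs the interior estimate with finite differences as well. Replacing your continuous test by $\bv=-d_{-h}^j(\xi_0^2 d_h^j\bue)$ fixes this cleanly and changes nothing else.

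The normal-direction argument is where your proposal diverges from the paper and where I see a genuine gap. You propose to view the pointwise equation as a linear system for $\partial_3^2\bue$ with coefficient matrix of the form $\epsilon I+B(\nabla a,\partial_{kl}S(\bD\bue))$, assert that the off-diagonal entries are of size at most $|\nabla a|<r_P$, and conclude by diagonal dominance. This description of the matrix is not accurate: the block built from $\partial_{k3}S^\vep_{i3}(\bD\bue)$ (and from $\partial_{3\alpha}S^\vep_{i3}(\bD\bue)$) has off-diagonal entries of the same size as the diagonal, namely $\sim(\delta+|\bD\bue|)^{p-2}$, and these are by no means small. The small quantity $|\nabla a|$ enters only through the perturbation created by rewriting mixed derivatives $\partial_\alpha\partial_3 u^j_\vep$ via $\partial_\alpha=\partial_{\tau_\alpha}-\partial_\alpha a\,\partial_3$; it cannot make the unperturbed system diagonally dominant. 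Moreover, the positivity you want is the positivity of $\sum\partial_{kl}S_{ij}(\bQ)P_{kl}P_{ij}$ on symmetric tensors $\bP$ (estimate~\eqref{1.4b}); this does not automatically give a sign for $\sum_k\big(\epsilon\delta_{ik}+\partial_{k3}S^\vep_{i3}\big)\partial_3 D_{k3}\bue\cdot\partial_3^2 u^i_\vep$ with the naive multiplier $\partial_3^2 u^i_\vep$, because this is not the restriction of the bilinear form to symmetric arguments. The idea that makes this step work, taken from~\cite{SS00} and used in~\cite{br-plasticity}, is to multiply instead by the specially chosen $\partial_3\widehat D_{i3}\bue$ (with $\widehat D_{\alpha\beta}\bue=0$, $\widehat D_{\alpha 3}\bue=2D_{\alpha 3}\bue$, $\widehat D_{33}\bue=D_{33}\bue$); only after this algebraic manipulation does the nonlinear block give the coercive lower bound $(\epsilon+\phi''(|\bD\bue|))|\boldsymbol{\mathfrak b}|^2$, with $\boldsymbol{\mathfrak b}_i=\partial_3 D_{i3}\bue$, after which the smallness of $\|\nabla a\|_\infty$ serves to absorb the $\|\nabla a\|_\infty|\nabla^2\bue|$-terms appearing in $\boldsymbol{\mathfrak f}$ and in the lower bound for $|\boldsymbol{\mathfrak b}|$. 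Without identifying this multiplier (or an equivalent algebraic device), your perturbation-and-inversion argument does not close. The $\delta^{-1}$-dependence in the final constant comes from the upper bound $\phi''(|\bD\bue|)\leq C_1\delta^{p-2}$ once one divides through, not from a ``degenerating invertibility'' as you describe. Finally, in the parabolic setting one also has to account for the extra term $\sum_i\partial_t u^i_\vep\,\partial_3\widehat D_{i3}\bue$: after multiplying by $\xi_P^2$ and integrating by parts in space (which is allowed because $\bue$ and $\partial_t\bue$ vanish on $\partial\Omega$) it contributes $\frac12\frac{d}{dt}\|\xi_P\partial_3\bue\|_2^2$ plus lower-order terms controlled by~\eqref{eq:estimate-partial-t-F}; this bookkeeping is absent from your sketch but is needed to obtain the first term on the left of~\eqref{eq:est-eps-1}.
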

\begin{remark}
  Proposition~\ref{prop:JMAA2017-1} and Proposition~\ref{thm:existence_perturbation} imply that
  $\bue(t) \in W^{2,2}(\Omega)$ and $\frac{\partial \bue}{\partial
    t}(t)\in L^2(\Omega)$ for a.e.~$t \in I$.
  Hence, equations~\eqref{eq:eq-e} hold pointwise a.e.~in~$I$.
\end{remark}

\begin{proof}[Proof of Proposition~\ref{prop:JMAA2017-1}]
    Fix $P\in \partial \Omega$ and use in $\Omega_P$ 
  \begin{equation*}
    \bv=\difn{(\xi^2\difp(\bue |_{\frac 12 {\Omega}_P}))},
  \end{equation*}
  where $\xi:=\xi_P$, $a:=a_P$, and $h\in(0,\frac{R_{P}}{16})$, as a
  test function in~\eqref{eq:weak-eps}. This
  yields, after integration by parts over $\Omega$, for a.e.~$t \in I$
  \begin{equation*}
  \begin{aligned}
    \intO &\xi^2\difp{\frac{\partial\bue}{\partial t}}(t)\cdot \difp\bue (t)\, d\bx + \intO
    \xi^2\difp{\T^\vep(\bD\bue (t))}\cdot \difp \bD\bue  (t)\, d\bx
    \\
    =&-\intO \T^\vep(\bD\bue (t))\cdot\big(\xi ^2 \difp \partial _3 \bue (t) -(\xi_{-\tau }
    \difn\xi +\xi \difn\xi) \partial_3\bue  (t)\big) \otimess\difn\nabla a\, d\bx
    \\
    &-\intO \T^\vep(\bD\bue (t))\cdot \xi^2 \trap{(\partial _3\bue (t))}\otimess
    \difn{\difp{\nabla a}} - \T^\vep(\bD\bue (t))\cdot\difn{\big(2\xi \nabla \xi \otimess
      \difp\bue (t)\big)}\, d\bx
    \\
    &+\intO \T^\vep(\trap{(\bD\bue (t))})\cdot \big(2 \xi \partial_3\xi \difp\bue (t) + \xi^2
    \difp\partial_3\bue (t) \big)\otimess\difp\nabla a \, d\bx
    \\
    &+\intO\ff (t)\cdot\difn(\xi^2 \difp \bue (t))\, d\bx=:\sum_{j=1}^{8}\mathcal{I}_j\,.
  \end{aligned}
\end{equation*}
Hence, by using the estimates for $\mathcal{I}_j$ as
in~\cite[Proposition~3.1]{br-plasticity} (see
also~\cite[Proposition~4.4]{br-reg-shearthin}) and by observing that
$d^+\frac{\partial\bue}{\partial t}=\frac{\partial d^+\bue}{\partial t}$, one gets
\begin{align*}
  &\frac{d}{dt}\,\frac{1}{2}\intO\xi^2|\difp\bue(t)|^2\,d\bx+\vep \intO \, \xi^2 \bigabs{ \difp
    \nabla \bue(t) }^2 +\xi^2\bigabs{\nabla \difp\bue(t) }^2 d\bx
  \\
  &\quad  +\intO \xi^2 \bigabs{ \difp \bF (\bD\bue(t)) }^2 \!+\! \phi (\xi \abs{\difp\nabla
    \bue(t)}) + \phi (\xi \abs{\nabla \difp \bue(t)})\, d\bx \notag
  \\
  &\le
  c(\norm{\xi}_{2,\infty},\norm{a}_{C^{2,1}},\delta)\int\limits_\Omega|\ff(t)|^{p'}+\epsilon|\nabla\bue(t)|^2
  +\varphi(|\bD\bue(t)|)\,d\bx\,, 
\end{align*}
and, after integration over $[0,t]\subseteq I$ and the use of the a priori
estimate~\eqref{eq:main-apriori-estimate2}, we get
\begin{align*}
  &\frac{1}{2}\intO\xi^2|\difp\bue(t)|^2\,d\bx+\vep\int\limits_{0}^{t}\intO \, \xi^2 \bigabs{ \difp
    \nabla \bue(s) }^2 +\xi^2\bigabs{\nabla \difp\bue(s) }^2 d\bx\,ds
  \\
  & \quad+\int\limits_{0}^{t}\intO \xi^2 \bigabs{ \difp \bF (\bD\bue(s)) }^2 \!+\! \phi (\xi
  \abs{\difp\nabla \bue(s)}) + \phi (\xi \abs{\nabla \difp \bue(s)})\, d\bx\,ds \notag
  \\
  & \le \frac{1}{2}\intO|\xi^2\difp\bu_0|^2\,d\bx +\frac{1}{2}\|\bu_0\|_2^2+
  c(\norm{\xi}_{2,\infty},\norm{a}_{C^{2,1}},\delta)\int\limits_0^T\|\ff(t)\|_{p'}^{p'}\,dt
  \\
  & \le \frac{1}{2}\|\bu_0\|^2_{1,2}+
  c(\norm{\xi}_{2,\infty},\norm{a}_{C^{2,1}},\delta)\int\limits_0^T\|\ff(t)\|_{p'}^{p'}\,dt\,,
\end{align*}
from which~$(\ref{eq:est-eps})_2$ follows by standard arguments, using
that the estimates are independent of $h>0$.

\smallskip

The same argument used with a test function $\xi_{00}$ with compact support in $\Omega$,
and standard finite differences can be used to prove~$(\ref{eq:est-eps-1})_1$: this
implies that
\begin{equation*}
  \bue\in L^\infty(I;W^{1,2}_{loc}(\Omega))\cap
  L^2(I;W^{2,2}_{loc}(\Omega))\qquad\text{and}\qquad
  \bF(\bD\bue)\in L^2(I;W^{1,2}_{loc}(\Omega)).
\end{equation*}
Coupling the latter with the time regularity from Proposition~\ref{thm:existence_perturbation}
one obtains that the equations~\eqref{eq:eq-e}  hold pointwise
a.e.~in $I\times \Omega$.

\bigskip

We prove now the result on the regularity in the normal direction
from~\eqref{eq:est-eps-1}. We re-write the equations in~\eqref{eq:eq-e} as follows
  \begin{equation*}
    \label{eq:linear_system}
    -\frac {\partial u^i_\vep}{\partial t}   + \sum_{k=1}^3\partial_{k 3}S_{i 3}^\vep(\bD\bue)\partial_3
    D_{k 3}\bue +\partial_{3\alpha}S_{i 3}^\vep(\bD\bue)\partial_3
    D_{ 3\alpha}\bue  =\mathfrak{f}^i\qquad\textrm
    {a.e.\ in }I\times \Omega\,,
  \end{equation*}
  where $\mathfrak {f}^i :=-f^i -\partial_{\gamma \sigma}S_{i 3}^{\vep}(\bD\bue)\partial_3
  D_{\gamma \sigma}\bue- \sum_{k,l=1}^3\partial_{k l}S_{i \beta}^{\vep}(\bD\bue)\partial_\beta
  D_{k l}\bue$, $i=1,2,3$.
  We now proceed as in~\cite[Eq.~(3.3)]{br-plasticity} and we multiply these equations by
  $\partial _3 \widehat D_{i 3}\bue$, where $\widehat D_{\alpha\beta}\bue =0$, for
  $\alpha,\beta=1,2$, $\widehat D_{\alpha 3}\bue =\widehat D_{3\alpha}\bue
  =2D_{\alpha3}\bue$, for $\alpha=1,2$, $\widehat D_{33}\bue =D_{33}\bue $ and sum over
  $i=1,2,3$. In this way we get a lower bound on the nonlinear term from the
  left-hand-side  in such a way that
  \begin{equation*}
    -\sum_{i=1}^3\frac {\partial u^i_\vep}{\partial t}\partial_3\widehat{D}_{i3}\bue+    \left(
      \epsilon+\phi''(|\bD\bu_\vep|)\right)| {\boldsymbol { \mathfrak b}}|^2 \leq
    c    |\boldsymbol { \mathfrak f}|| {\boldsymbol {   \mathfrak b}}|\qquad\textrm {a.e.\ in
    }I\times \Omega\,,
  \end{equation*}
where $\mathfrak b_i :=\partial_3 D_{i3}\bue$.

  By straightforward manipulations (cf.~\cite[Sections 3.2~and~4.2]{br-plasticity}) we
  have  that for a.e. $(t,\bx)$ in $I\times \Omega_P$ 
  \begin{equation*}
    \begin{aligned}
      & |\boldsymbol { \mathfrak f}| \leq c \left(|\bff|
        +(\epsilon+\phi''(|\bD\bu_\vep|))\left(|\partial_\tau\nabla \bu_\vep|+\|\nabla
          a\|_{\infty}|\nabla^2 \bu_\vep|\right)\right)
      \\
      & |\boldsymbol { \mathfrak b}|\geq2|\widetilde{\boldsymbol{\mathfrak
          b}}|-|\partial_{\tau}\nabla\bu_\vep|-\|\nabla a\|_{\infty}|\nabla ^{2}\bu_\vep|,
    \end{aligned}
  \end{equation*}
  for $\widetilde {\mathfrak b}_i:=\partial ^2_{33}u^i_\vep$, $i=1,2,3$. These results
  imply that a.e.~in $I\times \Omega_P$
  \begin{align*}
    -&\sum_{l=1}^3\frac {\partial u^l_\vep}{\partial t}\partial_3\widehat{D}_{l3}\bue+ \left(
      \epsilon+\phi''(|\bD\bu_\vep|)\right)| {\widetilde {\boldsymbol { \mathfrak b}}}|^2
    \\
    & \leq c \left[|\bff|+(\epsilon+\phi''(|\bD\bu_\vep|))\left(|\partial_\tau\nabla
        \bu_\vep|+\|\nabla a\|_{\infty}|\nabla^2 \bu_\vep|\right)\right]  |{{\boldsymbol {
        \mathfrak b}}}|.
  \end{align*}
  We then add on both sides, for
  $\alpha=1,2$ and $i,k=1,2,3$ the term
  \begin{equation*}
    \left(\vep+\phi''(|\bD\bu_\vep|)\right)\,|\partial_\alpha\partial_i
    u^k_\vep|^2\,,
  \end{equation*}
  and estimate $\boldsymbol{\mathfrak{b}}$ with all second order spatial derivatives
  obtaining
  \begin{equation*}
    \begin{aligned}
      -&\sum_{l=1}^3\frac{\partial u^l_\vep}{\partial t}\partial_3\widehat{D}_{l3}\bue+ \left(
        \epsilon+\phi''(|\bD\bu_\vep|)\right)| {\nabla ^2\bu_\vep }|^2
      \\
      &\leq \frac{\epsilon}{4}| {\nabla ^2\bu_\vep }|^2+
      \frac{1}{4}(\epsilon+\phi''(|\bD\bu_\vep|))|\nabla^2 \bue|^2
      \\
      &\quad +c \left(|\bff|^2 +(\epsilon+\phi''(|\bD\bu_\vep|))\left(|\partial_\tau\nabla
          \bu_\vep|^2+\|\nabla a\|_{\infty}|\nabla^2 \bu_\vep|\right)\right)\,,
    \end{aligned}
  \end{equation*}
  where in the right-hand side we used also the definition of the tangential derivative
  (cf.~\eqref{eq:1}).  Next, we choose the open sets $\Omega_{P}$ in such a way that
  $\|\nabla a\|_{\infty}=\|\nabla a_{P}(x_1,x_2)\|_{{\infty},\Omega_{P}}$ is small enough,
  so that we can absorb the last term from the right-hand side. We finally arrive at the
  following pointwise inequality
  \begin{equation}
   \label{eq:pointwise-estimate}
   \begin{aligned}
      -&\sum_{i=1}^3\frac {\partial u^i_\vep}{\partial t}\partial_3\widehat{D}_{i3}\bue+ \left(
        \epsilon+\phi''(|\bD\bu_\vep|)\right)| {\nabla ^2\bu_\vep }|^2
      \\
      &
      \leq c \left(|\bff|^2+(\epsilon+\phi''(|\bD\bu_\vep|))\left(|\partial_\tau\nabla
          \bu_\vep|^2\right)\right)\quad \text{a.e. in }I\times \Omega_P\,.
    \end{aligned}
  \end{equation}
  We neglect $\phi''(|\bD\bu_\vep|)$ (which is non-negative) from the left-hand side, 
  multiply by $\xi^2$, and integrate in the spatial variable over the whole domain
  $\Omega$. In particular, since $\bue$ and $\frac{\partial
    \bue}{\partial t}$ both vanish on
  $I\times\partial\Omega$, the first term coming from the left-hand side
  of~\eqref{eq:pointwise-estimate} can be written as follows by performing some
  integration by parts:
  \begin{equation*}
    \begin{aligned}
      &      \intO      -\sum_{i=1}^3\xi^2 \frac{\partial
      u^i_\vep}{\partial t}\partial_3\widehat{D}_{i3}\bue\,d\bx 
      \\
      &    =\intO
      \xi^2\frac{1}{2}\frac{d}{dt}|\partial_3\bue|^2-\sum_{\alpha=1}^2\xi^2\frac
      {\partial
      u_\vep^\alpha}{\partial t} \,\partial^2_{3\alpha}u_\vep^3+\sum_{i=1}^3\xi\,\partial_3 \xi\frac{\partial
      u_\vep^i}{\partial t}\partial_3 u_\vep^i\,d\bx.
    \end{aligned}
  \end{equation*}
  After a further integration over $[0,t]\subseteq I$ we obtain
  from~\eqref{eq:pointwise-estimate}, the
  tangential regularity already proved in $(\ref{eq:est-eps})_2$ and
  Korn's inequality the following inequality
\begin{equation*}  
  \begin{aligned}
    &\|\xi^2\partial_3\bue(t)\|_2^2+\vep\int\limits_{0}^{t}\intO|\nabla
    ^2\bue|^2\,d\bx\,ds
    \\
    &\leq \|\xi^2\partial_3\bu_0\|_2^2 + c(\vep^{-1},
    \norm{\xi}_{2,\infty},\norm{a}_{C^{2,1}},\delta^{-1})\int\limits_0^T\|\ff(s)\|_{p'}^{p'}+\Bignorm{\frac{\partial\bue}{\partial
      t}(s)}_2^2
    ds,
  \end{aligned}
\end{equation*}
hence the boundedness of the right-hand side,  by using
Proposition~\ref{thm:existence_perturbation}.  

With this estimate and recalling the properties of the covering we
finish the proof.
\end{proof}
\subsection{Uniform estimates for the second order spatial derivatives}
We now improve the estimate in the normal direction in the sense that we will show that
they are bounded uniformly with respect to $\vep, \delta >0$. The used method is an
adaption to the time evolution problem of the treatment in~\cite{br-plasticity}, which is
based on previous results from~\cite{SS00}.
\begin{proposition}
  \label{prop:main}
  Let the same hypotheses as in Theorem~\ref{thm:MT} be satisfied with $\delta >0$ and let
  the local description $a_P$ of the boundary and the localization function $\xi_P$
  satisfy $(b1)$--\,$(b3)$ and $(\ell 1)$ (cf.~Section~\ref{sec:bdr}). Then, there exists
  a constant $C_2>0$ such that the time-regular solution $\bue\in L^{\infty}(I;W^{1,2}_0(\Omega))\cap
  L ^2(I;W^{2,2}(\Omega))$ of
  the approximate problem~\eqref{eq:eq-e} satisfies\footnote{Recall that $c(\delta)$ only
    indicates that the constant $c$ depends on $\delta$ and will satisfy $c(\delta)\le
    c(\delta_0)$ for all $\delta\le \delta_0$.  }  for every $P\in
  \partial \Omega$ and for a.e.~$t\in I$
  \begin{equation}\label{eq:main-est}
      \int\limits_\Omega\xi_P^2|\partial_3\bue(t)|^2\,d\bx+\int\limits_0^t \int\limits_\Omega \epsilon
      \,\xi^2_P |\partial_3\bD\bue|^2 + \xi^2_P |\partial_3\bF(\bD\bue)|^2\,d\bx\,ds
     \leq c\,, 
  \end{equation}
  provided $r_P<C_2$ in $(b3)$, with $c$ depending on
  $ \|\bu_0\|_{2,2},\norm{\divo \bS(\bD\bu_0)}_2,
  \norm{\ff}_{L^{p'}(I \times\Omega)}$, $
  \norm{\ff}_{L^{2}(I\times \Omega)},
  \norm{\frac{\partial\ff}{\partial
      t}}_{L^{2}(I\times \Omega)},\norm{\xi_P}_{2,\infty},\norm{a_P}_{C^{2,1}},\delta,
  C_2$.
\end{proposition}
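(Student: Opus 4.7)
The strategy is to revisit the pointwise inequality~\eqref{eq:pointwise-estimate} that was already established within the proof of Proposition~\ref{prop:JMAA2017-1} and to extract from it a bound in which the quadratic form on the left is used with its full $(p,\delta)$-structure rather than being weakened to $\vep|\nabla^2\bue|^2$. Concretely, Proposition~\ref{prop:equivalence} gives
\begin{equation*}
  \phi''(|\bD\bue|)\,|\partial_3 \bD\bue|^2 \sim |\partial_3 \bF(\bD\bue)|^2,
\end{equation*}
so discarding $\phi''$ on the left would be wasteful; by retaining it we recover precisely the term $\int \xi_P^2 |\partial_3 \bF(\bD\bue)|^2$ that appears in~\eqref{eq:main-est}, and the equivalence constants depend only on the characteristics of~$\bS$, not on $\vep$ or $\delta$.

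Concretely I would multiply~\eqref{eq:pointwise-estimate} by $\xi^2:=\xi_P^2$ and integrate in space. The first term on the left is rewritten, just as in the proof of Proposition~\ref{prop:JMAA2017-1}, in the form
\begin{equation*}
  \int_\Omega -\sum_{i=1}^3 \xi^2 \frac{\partial u_\vep^i}{\partial t}\, \partial_3 \widehat{D}_{i3}\bue\, d\bx
  = \frac{1}{2}\frac{d}{dt}\int_\Omega \xi^2|\partial_3 \bue|^2\,d\bx + \mathcal R_1 + \mathcal R_2,
\end{equation*}
where $\mathcal R_1=-\sum_{\alpha=1}^2\int_\Omega \xi^2\, \partial_t u_\vep^\alpha\, \partial^2_{3\alpha} u_\vep^3\,d\bx$ and $\mathcal R_2$ collects the lower-order terms in $\xi\partial_3\xi$. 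The coercive contribution from the second term on the left of~\eqref{eq:pointwise-estimate} is bounded below, via Proposition~\ref{prop:equivalence}, by $\vep\int_\Omega \xi^2|\partial_3\bD\bue|^2 + c\int_\Omega \xi^2|\partial_3\bF(\bD\bue)|^2$, which is exactly the coercive content of~\eqref{eq:main-est}. After integration in time over $[0,t]\subseteq I$, the right-hand side of~\eqref{eq:pointwise-estimate} is handled uniformly in $\vep,\delta$: the $|\bff|^2$ term by $\bff\in L^2(I\times\Omega)$, and the $(\vep+\phi''(|\bD\bue|))|\partial_\tau \nabla \bue|^2$ term by the tangential estimate $(\ref{eq:est-eps})_2$ (again via Proposition~\ref{prop:equivalence} applied in the tangential direction).

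The commutators $\mathcal R_1, \mathcal R_2$ are dealt with as follows. Using $\partial_\alpha=\partial_\tau-\partial_\alpha a\,\partial_3$ (and the fact that $a=a_P$ does not depend on $x_3$, so $\partial_\tau$ and $\partial_3$ commute) I rewrite $\partial^2_{3\alpha}u_\vep^3=\partial_\tau\partial_3 u_\vep^3-\partial_\alpha a\,\partial^2_{33} u_\vep^3$. This splits $\mathcal R_1$ into a genuinely tangential piece—controlled by Young's inequality against $\|\partial_t\bue\|_2^2$ (bounded uniformly in $\vep,\delta$ by Proposition~\ref{thm:existence_perturbation}) and against the uniform tangential bound $(\ref{eq:est-eps})_2$ on $\partial_\tau\partial_3\bue$—and a boundary-curvature piece carrying the small factor $\|\nabla a\|_\infty\le r_P$, which is absorbed on the left by the $\vep\int\xi^2|\nabla^2\bue|^2$ contribution already present in~\eqref{eq:pointwise-estimate}, provided $r_P<C_2$ is chosen small enough. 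The lower-order remainder $\mathcal R_2$ is bounded by $c\|\xi\|_{2,\infty}(\|\partial_t\bue\|_2^2+\|\nabla\bue\|_2^2)$, and hence is in $L^1(I)$ uniformly in $\vep,\delta$ by~\eqref{eq:main-apriori-estimate2}--\eqref{eq:estimate-partial-t-F}. Time integration and a standard Gronwall argument then yield~\eqref{eq:main-est}.

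The main obstacle I anticipate is ensuring that no inverse power of $\vep$ or $\delta$ sneaks in when absorbing $\mathcal R_1$: in particular one must not estimate $\phi''(|\bD\bue|)$ from below by $\vep$ anywhere, and the geometric error term $\partial_\alpha a\,\partial^2_{33}\bue$ has to be handled purely through the smallness of the Lipschitz norm of $a_P$ (i.e.\ by shrinking $\Omega_P$ so that $r_P<C_2$), exactly the same mechanism that made the absorption possible in the derivation of~\eqref{eq:pointwise-estimate} itself. Apart from this careful bookkeeping of constants, the proof is a time-dependent adaptation of the steady-state reasoning in~\cite{br-plasticity}.
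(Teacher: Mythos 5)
Your proposal is built on reusing the pointwise inequality~\eqref{eq:pointwise-estimate}, but that inequality is fundamentally unable to produce estimates that are uniform in $\vep$ and $\delta$, which is precisely the point of Proposition~\ref{prop:main}. The obstruction is on the right-hand side, not the left: after multiplying by $\xi^2$ and integrating you face the term
\begin{equation*}
  \int\limits_0^t\int\limits_\Omega \xi^2\,\big(\vep+\phi''(|\bD\bue|)\big)\,\abs{\partial_\tau\nabla\bue}^2\,d\bx\,ds\,,
\end{equation*}
and the tangential estimate~$(\ref{eq:est-eps})_2$ only controls $\int_0^t\int_\Omega \vep\,\xi^2\abs{\partial_\tau\bD\bue}^2+\xi^2\abs{\partial_\tau\bF(\bD\bue)}^2$, i.e.\ the \emph{symmetric} gradient weighted by the same matrix $\phi''(|\bD\bue|)$. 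Passing from $\abs{\partial_\tau\bD\bue}^2$ to $\abs{\partial_\tau\nabla\bue}^2$ under the solution-dependent weight $\phi''(|\bD\bue|)$ would require a weighted Korn inequality with the weight $(\delta+|\bD\bue|)^{p-2}$, which is false in general; your appeal to ``Proposition~\ref{prop:equivalence} applied in the tangential direction'' only yields $\phi''(|\bD\bue|)\abs{\partial_\tau\bD\bue}^2\sim\abs{\partial_\tau\bF(\bD\bue)}^2$ and does not bridge the $\bD$--$\nabla$ gap. For $\delta>0$ one can of course bound $\phi''$ by $c\,\delta^{p-2}$ and then invoke unweighted (localized) Korn against the $\vep$-weighted tangential bound, but that is exactly how the paper obtains~\eqref{eq:est-eps-1}, and it introduces the $\delta^{-1}$, $\vep^{-1}$ dependence that Proposition~\ref{prop:main} is designed to remove. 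This is also why the paper explicitly states that~\eqref{eq:pointwise-estimate} only gives $\vep,\delta$-dependent bounds in the normal direction, and that the uniform estimate requires a different argument.

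The same issue resurfaces in your treatment of $\mathcal R_1$: the tangential piece $\int\xi^2\,\partial_t u^\alpha_\vep\,\partial_\tau\partial_3 u^3_\vep$ needs an \emph{unweighted} $L^2$-bound on $\partial_\tau\nabla\bue$, which~$(\ref{eq:est-eps})_2$ does not supply uniformly in $\vep,\delta$ (it only gives $\vep\|\td\bD\bue\|^2_2\le c$ and $\|\td\bF(\bD\bue)\|^2_2\le c$), and Young against the $\vep|\nabla^2\bue|^2$ coercivity again costs $\vep^{-1}$.

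The paper avoids all of this by not using~\eqref{eq:pointwise-estimate} at all in the proof of Proposition~\ref{prop:main}. Instead it starts from $\frac{1}{C_0}\abs{\partial_3\bF(\bD\bue)}^2\le\mathbb{P}_3(\bD\bue)$ and, following the algebraic decomposition of~\cite{br-plasticity,SS00}, splits the resulting quadratic form into $\mathcal J_1+\mathcal J_2+\mathcal J_3$, where $\mathcal J_1,\mathcal J_2$ pair $\partial_3$ of the stress with \emph{tangential} derivatives of $\bD\bue$ (so the $\phi''$ weight is always coupled with $\partial_\tau\bD\bue$ or $\partial_\tau\bF(\bD\bue)$, never with $\partial_\tau\nabla\bue$), and $\mathcal J_3$ is rewritten via the PDE so that the time-derivative contribution integrates by parts cleanly to $-\tfrac12\tfrac{d}{dt}\int\xi^2\abs{\partial_3\bue}^2$ plus lower-order terms, with no $\partial^2_{3\alpha}u^3$ cross terms to control. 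Every remaining ``bad'' term carries a factor $\norm{\nabla a}_\infty$ and is absorbed by shrinking $\Omega_P$. In short, the correct mechanism is the $\mathcal J$-decomposition, not a sharper reading of~\eqref{eq:pointwise-estimate}; without that structural rewrite the weighted Korn gap cannot be closed.
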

\begin{proof}
  We adapt the strategy as in~\cite[Proposition~3.2]{br-plasticity} to the time-dependent
  problem.  Fix an arbitrary point $P\in \partial \Omega$ and a local description $a=a_P$
  of the boundary and the localization function $\xi=\xi_P$ satisfying $(b1)$--\,$(b3)$
  and $(\ell 1)$. In the following we denote by $C$ constants that depend only on the
  characteristics of $\bS$.  First we observe that, by the results of
  Proposition~\ref{prop:equivalence} there exists a constant $C_0$, depending only on the
  characteristics of $\bS$, such that\footnote{In this section we do not write explicitly
    the dependence on space and time variables, since the reader at this point will be
    acquainted enough with the matter to avoid heavy notation.}
  \begin{equation*}
   \frac{1}{C_0}| \partial_3\bF(\bD\bue)|^2\leq
   \mathbb{P}_3(\bD\bue)\qquad \text{a.e.  in }I\times\Omega.
  \end{equation*}
  Thus, we get, using also the symmetry of both $\bD\bue$ and $\bS$,
  \begin{equation*}
    \begin{aligned}
      &\int\limits_\Omega \vep \,\xi^2|\partial_3 \bD\bue|^2 + \frac{1}{C_0} \xi^2
      |\partial_3\bF(\bD\bue)|^2\,d\bx
      \\
      &\leq\int\limits_\Omega\xi^2 \big (\vep \,\partial_3 D_{\alpha\beta}\bue +\partial_3
      S_{\alpha\beta} (\bD\bue) \big) \partial_3 D_{\alpha\beta}\bue\,d\bx
      \\
      &\quad + \int\limits_\Omega\xi^2\big(\vep \,\partial_3D_{3\alpha}\bue+ \partial_3S_{3\alpha
      }(\bD\bue)\big)\partial _\alpha D_{33}\bue\,d\bx
      \\
      &\quad +\int\limits_\Omega \sum_{j=1}^3 \xi^2\partial_3\big ( \vep\, D_{j3}\bue
      +S_{j3}(\bD\bue)\big ) \partial_3^2 u^j_\vep\,d\bx
      =:\mathcal{J}_{1}+\mathcal{J}_{2}+\mathcal{J}_{3}\,.
    \end{aligned}
  \end{equation*}
  The terms $\mathcal{J}_{1}$ and $\mathcal{J}_{2}$ can be estimated exactly as
  in~\cite{br-plasticity} to prove, for $\lambda>0$, that
\begin{equation*}
  \begin{aligned}
    |\mathcal{J}_{1}|+|\mathcal{J}_{2}| \leq &\,\param\int\limits_\Omega\xi^2
    |\partial_3\bF(\bD\bue)|^2 + 
    \vep\,\xi^2|\partial_3\bD\bue|^2\,d\bx
    \\
    &+c_{\param^{-1}}(1+\|\nabla a\|_\infty^2)\sum_{\beta=1}^2\int\limits_\Omega\xi^2
    |\partial_\beta\bF(\bD\bue)|^2 +\vep\,\xi^2|\partial_\beta\bD\bue|^2 \,d\bx
    \\
    &+c_{\param^{-1}}\sum_{\beta=1}^2\int\limits_\Omega\xi^2
    |\partial_{\tau_\beta}\bF(\bD\bue)|^2+ \vep\,\xi^2|\partial_{\tau_\beta}\bD\bue|^2\,d\bx
    \\
    &+c_{\param^{-1}} \big(1+\|\nabla\xi\|_\infty^2+\|\nabla a\|^2_\infty\big
    )\int_\Omega\phi(|\bD\bue|)+\vep\,|\bD\bue|^2\, d\bx,
  \end{aligned}
\end{equation*}
for some constant $c_{\param^{-1}} $ depending only on $\param^{-1}$.
The term $\mathcal{J}_3$ can be estimated by observing that we can re-write the
equations~\eqref{eq:eq-e} as follows 
  \begin{equation*}
    \partial_3\big(\epsilon\, D_{j3}\bue+
    S_{j3}(\bD\bue)\big)=\frac {\partial u_\vep^j}{\partial t}-f^j-\partial_\beta\big (\epsilon\,D_{j\beta}\bue+
    S_{j\beta}(\bD\bue)\big)\qquad \text{a.e. in     }I\times\Omega\,.
  \end{equation*}
  Hence, we can multiply by $\bue$ and integrate by parts in space, since
  $\bue=\frac{\partial \bue}{\partial t} =\mathbf{0}$ on
  $I\times\partial\Omega$.  We treat the terms 
  without time derivative as $I_3$ in~\cite[p.~186]{br-plasticity} and integrate by parts
  the one involving $\partial_t\bue$ to get the following 
\begin{equation*}
  \begin{aligned}
    &\mathcal{J}_{3}=
    \\
    &=\sum_{j=1}^3\intO \xi^2\frac{\partial u_\vep^j}{\partial t}\,\partial^2_{33}u_\vep^j-\xi^2\big(f^j
    +\partial_\beta S_{j\beta}(\bD\bue)+\epsilon\partial_\beta D_{j\beta}\bue\big)\big
    (2\partial _3D_{j3}\bue - \partial _j D_{33}\bue\big )\,d\bx\,
    \\
    &=-\frac{1}{2}\frac {d}{dt}\intO \xi^2  |\partial_3\bue|^2\,d\bx
    -2\sum_{j=1}^3 \intO \xi \partial _3\xi \frac{\partial u_\vep
      ^j}{\partial t} \partial _3u^j_\vep \, d\bx
    \\
    &\qquad -\sum _{j=1}^3\intO\xi^2\big(f^j
    +\partial_\beta S_{j\beta}(\bD\bue)+\epsilon\partial_\beta D_{j\beta}\bue\big)\big
    (2\partial _3D_{j3}\bue - \partial _j D_{33}\bue\big )\,d\bx\,
    \\
    &\leq -\frac{1}{2} \frac d{dt}\intO \xi^2|\partial_3\bue|^2\,d\bx+
    \param\, C \!\int\limits_\Omega\xi^2|\partial_3\bF(\bD\bue)|^2\,d\bx
    +c_{\param^{-1}}\sum_{\beta=1}^2\int\limits_\Omega\xi^2|\partial_\beta\bF(\bD\bue)|^2\,d\bx
    \\
    &\quad+\param \! \int\limits_\Omega \vep \,\xi^2|\partial_3\bD\bue|^2\,d\bx +c_{\param^{-1}}
    \sum_{\beta=1}^2\int\limits_\Omega \vep
    \,\xi^2|\partial_\beta\bD\bue|^2\,d\bx +c\int\limits_\Omega \xi^2|\partial_3 \bue|^2\,d\bx
    \\
    &\quad  + c\,\norm{\nabla\xi }_\infty ^2 \Bignorm{\frac{\partial
        \bue }{\partial t}}_2^2+c_{\param^{-1}}\big(\|\bff\|_{p'}^{p'}+\|\bD\bue\|_p^p +\delta^p\,\big)\,.
\end{aligned}
\end{equation*}
In these estimates we use for the terms with
$\partial _\beta \bF(\bD\bue)$ and $\partial _\beta \bD\bue$ the
definition of the tangential derivative in \eqref{eq:1} to get
\begin{align*}
  &\int\limits_\Omega\xi^2
  |\partial_\beta\bF(\bD\bue)|^2 +\vep\,\xi^2|\partial_\beta\bD\bue|^2
    \,d\bx
  \\
  &\le \int\limits_\Omega\xi^2
  |\partial_{\tau_\beta}\bF(\bD\bue)|^2 +\vep\,\xi^2|\partial_{\tau_\beta}\bD\bue|^2
    \,d\bx
  \\
  &\quad + \norm{\nabla a}^2_\infty\int\limits_\Omega\xi^2
  |\partial_3\bF(\bD\bue)|^2 +\vep\,\xi^2|\partial_3\bD\bue|^2
    \,d\bx\,.
\end{align*}
Note that such terms already are present in the estimates 
for $\{\mathcal{J}_{i}\}_{i=1,2,3}$. Now we choose the covering
such that $\|\nabla a\|_\infty$ is small enough and only at this point we fix $\lambda>0$
small enough (in order to absorb in the left-hand side terms involving $\partial_3\bD\bue$
and $\partial_3\bF(\bD\bue)$). We then obtain after integration in time over
$[0,t]\subseteq[0,T]$ the following estimate
\begin{equation*}
  \begin{aligned}
    &    \intO\xi^3|\partial_3\bue(t)|^2\,d\bx+ \int\limits_0^t\int\limits_\Omega \vep
    \,\xi^2|\partial_3\bD\bue|^2+ \frac 1{C_0} \xi^2|\partial_3\bF(\bD\bue)|^2\,d\bx\,ds
    \\
    & \leq     \intO\xi^3|\partial_3\bu_0|^2\,d\bx +c 
    \sum_{\beta=1}^2 \int\limits_0^T \int\limits_\Omega\xi^2|\partial_{\tau_{\beta}}\bF(\bue)|^{2}+\vep
    \,\xi^2|\partial_{\tau _\beta}\bD\bue|^2\,d\bx\,ds
    \\
    &\quad+c 
    \int\limits_0^T\!\intO
    |\bff|^{p'} \! +\phi(|\bD\bue|)+\phi(\delta) +\vep \abs{\bD\bue}^2
    +\Bigabs{\frac{\partial \bue}{\partial t}}^2d\bx\,ds + \int\limits_0^t\!\intO\xi^3|\partial_3\bue|^2\,d\bx\,ds.
    \end{aligned}
\end{equation*}
Using the uniform estimates \eqref{eq:main-apriori-estimate2},
\eqref{eq:estimate-partial-t-F}  and \eqref{eq:est-eps} we can
apply Gronwall's inequality to prove the estimate \eqref{eq:main-est}.
\end{proof}
Choosing now an appropriate finite covering of the boundary (for the
details see also~\cite{br-reg-shearthin}),
Propositions~\ref{prop:JMAA2017-1}, \ref{prop:main} yield the
following result:
\begin{proposition}
\label{thm:estimate_for_ue}
Let the same hypotheses as in Theorem~\ref{thm:MT} with $\delta>0$ be satisfied. Then, it
holds\footnote{Recall that $c(\delta)$ only indicates that the constant $c$ depends on
  $\delta$ and will satisfy $c(\delta)\le c(\delta_0)$ for all $\delta\le \delta_0$.  }
for all $t\in I$
  \begin{equation*}
    \|\nabla\bue(t)\|^2_{2}+    \int\limits_0^t\epsilon\,\|\nabla\bD\bue(s)\|^2_2 + \| \nabla
    \bF(\bD\bue(s))\|^2_2\,ds\leq 
    C 
  \end{equation*}
  with $C$ depending on   $ \|\bu_0\|_{2,2},\norm{\divo \bS(\bD\bu_0)}_2,
  \norm{\ff}_{L^{p'}(I \times\Omega)}$, $
  \norm{\ff}_{L^{2}(I\times \Omega)},
  \norm{\frac{\partial\ff}{\partial
      t}}_{L^{2}(I\times \Omega)}$,$\delta$,  $\partial \Omega$ and
  the characteristics of $\bS$.
\end{proposition}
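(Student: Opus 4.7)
The strategy is to combine the local estimates of Propositions~\ref{prop:JMAA2017-1} and~\ref{prop:main} by a finite covering argument on $\overline{\Omega}$. Since $\partial\Omega$ is compact and of class $C^{2,1}$, and since the parameter $r_P$ in the local description $a_P$ can be made arbitrarily small by shrinking $R_P$, I can extract a finite boundary covering $\{\Omega_{P_j}\}_{j=1}^N$ in which every chart satisfies simultaneously the two smallness thresholds $r_{P_j}<\min(C_1,C_2)$ required by Propositions~\ref{prop:JMAA2017-1} and~\ref{prop:main}. I complete the covering with an interior open set $\Omega_{00}$ such that $\overline{\Omega_{00}}\subset\Omega$, and I select localization functions $\xi_{00}$, $\xi_{P_j}$ as in Section~\ref{sec:bdr}, arranged so that $\xi_{00}^2+\sum_{j=1}^N\xi_{P_j}^2\ge c_0>0$ on $\overline{\Omega}$.

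In each boundary chart $\Omega_{P_j}$ I have at my disposal the $\vep$-uniform tangential bound $(\ref{eq:est-eps})_2$ together with the $\vep$-uniform normal bound~\eqref{eq:main-est}. To convert the tangential derivative back to the ordinary partial derivatives $\partial_\alpha$, $\alpha=1,2$, I use the identity $\partial_\alpha=\partial_{\tau_\alpha}-\partial_\alpha a\,\partial_3$ coming from~\eqref{eq:1}. Since $\|\nabla a_{P_j}\|_\infty<r_{P_j}$ is small and the $\partial_3$-contribution is already controlled by~\eqref{eq:main-est}, this yields the chart-local bound
\begin{equation*}
\|\xi_{P_j}\nabla\bue(t)\|_2^2+\int\limits_0^t\int\limits_\Omega \epsilon\,\xi_{P_j}^2|\nabla\bD\bue|^2+\xi_{P_j}^2|\nabla\bF(\bD\bue)|^2\,d\bx\,ds\le c_j,
\end{equation*}
uniform in $\vep$. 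The interior piece is a direct application of $(\ref{eq:est-eps})_1$ with $\xi_0=\xi_{00}$, which already controls the full $|\nabla\bD\bue|$ and $|\nabla\bF(\bD\bue)|$ on $\spt\xi_{00}$.

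Summing these finitely many local inequalities and exploiting the lower bound on $\xi_{00}^2+\sum_{j}\xi_{P_j}^2$ yields the global estimate claimed in the statement. The only point that needs a little care is the fact that the thresholds $C_1,C_2$ depend only on the characteristics of $\bS$ and are therefore uniform in $P\in\partial\Omega$; this guarantees that the required smallness $r_P<\min(C_1,C_2)$ can be realized simultaneously in every chart, whereupon the extraction of a finite sub-covering is standard by compactness of $\partial\Omega$. Beyond this bookkeeping there is no new analytic input: all the substantive work has been done in Propositions~\ref{prop:JMAA2017-1} and~\ref{prop:main}, and in particular no further Gronwall iteration is needed at this final step.
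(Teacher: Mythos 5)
Your proposal is correct and takes essentially the same approach as the paper: the paper's own proof is a one-line pointer to Propositions~\ref{prop:JMAA2017-1} and~\ref{prop:main} plus a finite covering of $\overline{\Omega}$ (with details deferred to~\cite{br-reg-shearthin}), and your write-up simply makes that bookkeeping explicit — interior estimate from $(\ref{eq:est-eps})_1$, tangential from $(\ref{eq:est-eps})_2$, $\vep$-uniform normal from~\eqref{eq:main-est}, and conversion $\partial_\alpha = \partial_{\tau_\alpha}-\partial_\alpha a\,\partial_3$ via~\eqref{eq:1}. One small remark: at the gluing step you only need boundedness of $\|\nabla a_{P_j}\|_\infty$ for the conversion, not smallness (the smallness $r_{P_j}<\min(C_1,C_2)$ is consumed inside the proofs of the two propositions, not here), so that phrase is harmless but superfluous.
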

\subsection{Passage to the limit}
Since the estimates in Propositions~\ref{thm:existence_perturbation},
\ref{thm:estimate_for_ue} are uniform with respect to $\vep>0$, they are inherited by $\bu=\lim_{\epsilon\to0}\bue$. The function $\bu$ is the unique
\underline{regular} solution to the initial boundary value problem~\eqref{eq:pfluid}.
We can now prove the existence result for regular solutions.
\begin{proof}[Proof (of Theorem~\ref{thm:MT})]
  First, let us assume that $\delta >0$. From
  Proposition~\ref{thm:existence_perturbation}, Proposition~\ref{lem:hammer}, and
  Proposition~\ref{thm:estimate_for_ue} we know that $\bF(\bD\bue)$ is uniformly bounded with
  respect to $\vep$ in $W^{1,2}(I\times\Omega)$. This also implies (cf.~\cite[Lemma
  4.4]{bdr-7-5}) that $\bue$ is uniformly bounded with respect to $\vep $ in
  $L^p(I;W^{2,p}(\Omega))\cap W^{1,p}(I;W^{1,p}(\Omega))$. The properties of $\bS$ and
  Proposition~\ref{thm:existence_perturbation} also yield that $\bS(\bD\bue)$ is uniformly
  bounded with respect to $\vep $ in $L^{p'}(I\times\Omega)$.  Thus, there exists a
  sub-sequence $\{\epsilon_n\}$ (which converges to $0$ as $n\to+\infty)$, $\bu \in
  L^p(I;W^{2,p}(\Omega))\cap W^{1,p}(I;W^{1,p}_{0}(\Omega)) \cap W^{1,\infty}(I;L^2(\Omega))$, $\bF ^*\in
  W^{1,2}(I\times \Omega)$, and $\bS^* \in L^{p'}(I\times \Omega)$ such that
  \begin{equation*}
    \begin{aligned}
      \buen&\rightharpoonup \bu&&\text{in }L^p(I;W^{2,p}(\Omega)\cap
      W^{1,p}_0(\Omega))\cap W^{1,p}(I;W^{1,p}_0(\Omega)),
      \\
      \buen&\overset{*}{\rightharpoonup} \bu&&\text{in }W^{1,\infty}(I;L^2(\Omega))\,,
      \\
      \bD\buen&\to\bD\bu\quad&&\text{a.e. in }I\times\Omega\,,
      \\
      \bF(\bD\buen) &\rightharpoonup \bF^*&&\text{in
      }W^{1,2}(I\times \Omega)\,,
      \\
      \bS(\bD\buen) &\rightharpoonup \bS^*&&\text{in
      }L^{p'}(I\times \Omega)\,.
    \end{aligned}
  \end{equation*}
  The continuity of $\bS$ and $\bF$ and the classical result stating that
  the weak limit and the a.e.~limit in Lebesgue spaces coincide
  (cf.~\cite{GGZ}) implies that
  \begin{equation*}
    \begin{aligned}
      \bF^* =\bF(\bD\bu) \qquad \text{and}\qquad
      \bS^* = \bS(\bD\bu)\,.
    \end{aligned}
  \end{equation*}
  These results enable us to pass to the limit in the weak formulation
  \eqref{eq:weak-eps} of the perturbed problem~\eqref{eq:eq-e},
  which yields for all $\psi \in C_0^\infty (I)$ and all
  $\bv\in V$
\begin{align}\label{eq:weak-2}
  \int\limits _0^T\Bighskp{\frac {\partial\bfu}{\partial t}(t)
  }{\bfv}\psi (t) \, dt +\int\limits _0^T\hskp{\bS(\bD\bfu(t))}{\bD\bfv}\psi (t)\,dt =\int\limits
  _0^T\hskp{\bff(t)}{\bfv}\, \psi(t)\, dt\,,
\end{align}
since  $\lim _{\vep_n \to 0}\int\limits_0^T \int\limits_\Omega \vep_n
  \bD\buen(t) \cdot \bD\bv\, \psi(t)\,d\bx \,dt=0$. The weak lower semi-continuity of the norm implies that
\begin{align*}
  \| \bF(\bD\bu)\|_{W^{1,2}(I\times\Omega)}
  &\leq\liminf_{\epsilon_n\to0}
    \| \bF(\bD\buen)\|_{W^{1,2}(I\times\Omega)}\,,
  \\
    \| \bu\|_{W^{1,\infty}(I;L^2(\Omega))}
  &\leq\liminf_{\epsilon_n\to0}
    \| \buen\|_{W^{1,\infty}(I;L^2(\Omega))}\,.
\end{align*}
By density and the strict monotonicity of $\bS$ we thus know that $\bu$
  is the unique regular solution of problem~\eqref{eq:pfluid}.
This proves Theorem~\ref{thm:MT} in the case $\delta>0$, since the
weak formulation~\eqref{eq:cont-var} follows immediately from~\eqref{eq:weak-2}.

\smallskip

Let us consider now the case $\delta=0$. Proposition~\ref{prop:JMAA2017-1} and
Proposition~\ref{prop:main} are valid only for $\delta>0$ and thus cannot be used directly
for the case that $\bS$ has $(p,\delta)$-structure with $\delta=0$. However, it is proved
in~\cite[Section 3.1]{bdr-7-5} that for any stress tensor with $(p,0)$-structure $\bS$,
there exist\footnote{The special case $\bS^\kappa(\bD) =\abs{\bD}^{p-2}\bD$ could be
  approximated by ${\bS^{\kappa}(\bD) :=(\kappa+\abs{\bD})^{p-2}\bD}$ as
  $\kappa\to0$. However, for a general extra stress tensor $\bS$ having only
  $(p,\delta)$-structure this is not possible.} a stress tensors $\bS^\kappa$, having
$(p,\kappa)$-structure with $\kappa>0$ approximating $\bS$ in an appropriate
way. Thus we approximate~\eqref{eq:pfluid} by the system
\begin{equation*}
  \begin{aligned}
    \frac{\partial \bu_{\vep ,\kappa}}{\partial t} -\divo \bfS^{\vep,\kappa}
    (\bfD\bu_{\vep ,\kappa})&=\bff\qquad&&\text{in }I\times\Omega\,,
    \\
    \bfu_{\vep,\kappa} &= \bfzero &&\text{on } I\times\partial \Omega\,,
    \\
    \bfu_{\vep,\kappa}(0) &= \bu_0 &&\text{in } \Omega\,,
  \end{aligned}
\end{equation*}
where 
  \begin{equation*}
    \bS^{\epsilon,\kappa}(\bQ):=\epsilon \,\bQ +
    \bS^\kappa(\bQ),\qquad\text{with }\epsilon>0\,,\, \kappa\in (0,1)\,.
  \end{equation*}
  For fixed $\kappa>0$ we can use the above theory and use that fact that the estimates
  are uniform in $\vep$ to pass to the limit as $\epsilon\to0$. Thus, we obtain that
  for all $\kappa \in (0,1)$ there exists a unique $\bu_\kappa \in
  L^p(I;W^{1,p}_0(\Omega))$
  fulfilling
  \begin{align*}
    \norm{ \bfu_\kappa}_{W^{1,\infty}(I; L^2(\Omega))} +
    &\norm{\bF(\bD\bfu_\kappa)}_{W^{1,2}(I\times\Omega)}
      \leq c_0   ({\ff}, \bu_0,\partial \Omega) \,, 
  \end{align*}
  satisfying  for all $\psi \in C_0^\infty (I)$ and all
  $\bv\in W^{1,2}_0(\Omega)$
\begin{align*}
  \int\limits _0^T\Bighskp{\frac {\partial\bfu_\kappa}{\partial t}(t)
  }{\bfv}\psi (t) \, dt +\int\limits _0^T\hskp{\bS^\kappa(\bD\bfu_\kappa(t))}{\bD\bfv}\psi (t)\,dt =\int\limits
  _0^T\hskp{\bff(t)}{\bfv}\, \psi(t)\, dt\,,
\end{align*}
The constant $c_0$ is independent of $\kappa\in(0,1)$ and $\bF^\kappa\colon\setR^{3 \times
  3} \to \setR^{3 \times 3}_\sym$ is defined through
\begin{equation*}
  \bF^\kappa(\bP):= \big (\kappa+\abs{\bP^\sym} \big )^{\frac
    {p-2}{2}}{\bP^\sym } \,.
\end{equation*}
Now we can proceed as in~\cite{bdr-7-5}. Indeed, it follows that
$\bF^\kappa(\bD\bu_\kappa)$ is uniformly bounded in $W^{1,2}(I\times\Omega)$, that
$\bu_\kappa$ is uniformly bounded in $W^{1,p}(I\times \Omega)$ and that
$\bS^\kappa(\bD\bu_\kappa)$ is uniformly bounded in $L^{p'}(I\times\Omega)$. Thus, there
exist $\bF^* \in W^{1,2}(I\times\Omega)$, $\bu\in L^{p}(I;W^{1,p}_0(\Omega))$, $\bS^*
 \in L^{p'}(I\times \Omega)$, and a sub-sequence $\{\kappa_n\}$, with $\kappa_n\to0$,
such that
  \begin{equation*}
    \begin{aligned}
      \bF^{\kappa_n}(\bD\bu_{\kappa_n}) &\rightharpoonup \bF^*&&\text{in
      }W^{1,2}(I\times\Omega)\,,
      \\
      \bF^{\kappa_n}(\bD\bu_{\kappa_n})&\to \bF^*\quad&&\text{in
      }L^{2}(I\times\Omega) \text{ and a.e.~in }I\times \Omega\,,
      \\
      \bu_{\kappa_n}&\rightharpoonup \bu&&\text{in }
      L^p(I;W^{1,p}_0(\Omega))\,,
      \\
      \bS^\kappa(\bD\bu_\kappa)&\rightharpoonup \bS^* &&\text{in } L^{p'}(I\times\Omega)\,.
    \end{aligned}
  \end{equation*}
Setting $\bB:=( \bF^0)^{-1}(\bF^*)$, it follows from
\cite[Lemma 3.23]{bdr-7-5}  that
\begin{equation*}
  \bD\bu_{\letter_n}=(\bF^{\letter_n})^{-1}(\bF^{\letter_n}(\bD\bu_{\letter_n})
  )\to( \bF^0)^{-1}(\bF^*)=\bB\quad\text{ a.e.~in }I\times  \Omega.  
\end{equation*}
Since weak and a.e.~limit coincide we obtain that
\begin{equation*}
  \bD\bu_{\letter_n} \to \bD\bu=\bB \qquad \text{ a.e.~in }  I\times \Omega\,. 
\end{equation*}
From~\cite[Lemma 3.16]{bdr-7-5} and~\cite[Corollary 3.22]{bdr-7-5} it now follows that
\begin{align*}
  \begin{aligned}
    \bF^{\letter _n}(\bD\bu_{\kappa_n}) &\rightharpoonup \bF^0(\bD\bu)&&\text{in
    }W^{1,2}(I\times \Omega)\,,
    \\
    \bS^{\letter_n}(\bD\bu_{\letter_n}) &\to \bS(\bD\bu)
    &&\text{a.e.~in } I\times \Omega\,.
  \end{aligned}
\end{align*}
Since weak and a.e.~limit coincide we obtain that
\begin{equation*}
      \bF^* =\bF^0(\bD\bu) \qquad \text{and}\qquad
      \bS^* = \bS(\bD\bu) \qquad \text{ a.e.~in } I\times  \Omega\,. 
\end{equation*}
Now we can finish the proof in the same way as in the case
$\delta>0$. 
\end{proof}

\appendix

\section{On the interpolation operator.}
We will deduce some results on interpolation operators which satisfy
rather general assumptions. They are satisfied, e.g., by the
Scott–Zhang operator. We work now in a general $d$-dimensional
setting, i.e., we assume  that $\Omega \subset \setR^d$
is a polyhedral domain with Lipschitz continuous boundary.  Let
$\mathcal{T}_h$ denote a family of shape-regular triangulations,
consisting of $d$-dimensional simplices $K$.
We assume that $\mathcal{T}_{h}$ is non-degenerate, i.e.,
$\max _{K \in \mathcal{T}_{h}} \frac {h_K}{\rho_K}\le \gamma_0$.  The
global mesh-size $h$ is defined by $h:=\max _{K \in \mathcal{T}_h}h_K$.
By the assumptions we
obtain that $|S_K|\sim |K|$ and that the number of patches $S_K$ to
which a simplex belongs are both bounded uniformly in $h$ and $K$.
The finite element space $X_h$ is given by
\begin{align*}
  X_h:=\set{\bv \in L^1_{\loc}(\Omega) \fdg \bv \in \mathcal
  P_K(\mathcal{T}_h)}\,,
\end{align*}
where $\mathcal P_{r_0}(\mathcal{T}_h) \subset \mathcal P_K(\mathcal{T}_h) \subset \mathcal P_{r_1}(\mathcal{T}_h)$
for some $r_0\le r_1 \in \setN_0$.

We assume that the interpolation operator $P_h$ is $W^{\ell,1}$-stable. 

\begin{assumption}
\label{ass:interpolation}
Let $\ell_0 \in \setN_0$ and let $P_h \colon (W^{\ell_0,1}(\Omega))^d \to (X_h)^d$.

\noindent (a) For some $\ell\geq \ell_0$ and $m\in\setN_0$ holds uniformly in $K\in T_h$ and
$\bv\in (W^{l,1}(\Omega))^d$
\begin{equation*}
  \sum_{j=0}^mh_K^j \dashint\limits_K| \nabla^j P_h \bv |\, d \bx \leq c ( m , \ell )\sum_{k=0}^\ell
  h_K^k\dashint\limits_{S_K} | \nabla^k \bv |\, d \bx,
\end{equation*}
\noindent (b) For
all $\bv \in  (\mathcal{P}_{r_0} )^d (\Omega)$ holds 
\begin{equation*}
P_h\bv = \bv.
\end{equation*}
\end{assumption}
Note that we have to choose $\ell_0\ge 1$, if the operator $P_h$ is
preserving the boundary values, i.e., $P_h : (W^{\ell_0,1}(\Omega))^d \to
(X_h\cap W^{1,1}_0(\Omega))^d $. Otherwise we allow $\ell_0=0$. 




The properties of the interpolation
operator $P_h$ are discussed in detail in~\cite[Sec.~4,5]{dr-interpol},~\cite[Sec.~3.2]{bdr-phi-stokes}. Let us now prove the two additional
features formulated in Proposition~\ref{prop:Ph} (ii), (iii). We start
with the following non-homogeneous approximation property of $P_h$ (Note that
Proposition~\ref{prop:Ph} (ii) is a special case of the result below).

  \begin{proposition}\label{prop:int-error}
    Let $P_h$ satisfy Assumption~\ref{ass:interpolation} with
    $\ell \le r_0+1$ and let \linebreak ${r,q\in[1,\infty)}$ be such that
    $W^{\ell,q}(\Omega) \vnor \vnor W^{m,r}(\Omega)$. Moreover, assume
    that $h\sim h_K$ uniformly in $\mathcal T_h$. Then, there
    exists a constant ${c=c(\ell,m,q,r,r_0,r_1,\gamma_0)}$ such that
  \begin{align}\label{eq:int-error1}
    \sum_{j=0}^mh^j\,\|\nabla^j(\bv-P_h\bv)\|_r\le c\,  \sum_{k=0}^\ell
    h^{\ell +d\min\{0, \frac 1r -\frac 1q\}}\,\|\nabla^k \bv\|_q\,.
  \end{align}
\end{proposition}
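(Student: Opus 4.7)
The proof strategy is a standard Bramble--Hilbert localization combined with scaling, adapted to the mismatched integrability exponents $q$ and $r$. First I would fix $K \in \mathcal{T}_h$. Since $\ell \le r_0+1$, the space $\mathcal{P}_{\ell-1}$ lies in $\mathcal{P}_{r_0}$, so Assumption~\ref{ass:interpolation}(b) ensures $P_h\pi = \pi$ for every $\pi \in \mathcal{P}_{\ell-1}$. I would select a polynomial $\pi_K \in \mathcal{P}_{\ell-1}$ realizing the Bramble--Hilbert approximation on the shape-regular patch $S_K$, i.e.,
\[
\|\nabla^k(v - \pi_K)\|_{L^q(S_K)} \le c\, h^{\ell - k}\|\nabla^\ell v\|_{L^q(S_K)}, \qquad k = 0,\ldots,\ell,
\]
and decompose $v - P_h v = (v-\pi_K) - P_h(v-\pi_K)$ on $K$.

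For $P_h(v-\pi_K)$, I would apply Assumption~\ref{ass:interpolation}(a) with $\bv = v-\pi_K$, bound each $L^1$-average on the right-hand side by H\"older (factor $|S_K|^{-1/q}\sim h^{-d/q}$) followed by Bramble--Hilbert (so that $h^k h^{\ell-k}$ telescopes to $h^\ell$), and convert the $L^1$-average on $K$ back to the $L^r$-norm using the equivalence of norms on finite-dimensional polynomial spaces (factor $|K|^{1/r}\sim h^{d/r}$). This yields the local bound
\[
h^j \|\nabla^j P_h(v-\pi_K)\|_{L^r(K)} \le c\, h^{\ell + d(1/r-1/q)}\|\nabla^\ell v\|_{L^q(S_K)}.
\]
For the term $v - \pi_K$ itself the polynomial inverse estimate is unavailable, so I would instead pull back to a reference patch, invoke the continuous embedding $W^{\ell,q}(\widehat{S_K}) \hookrightarrow W^{m,r}(\widehat{K})$ supplied by the hypothesis, and scale back; applying the Bramble--Hilbert estimate then produces the identical local bound.

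Finally I would sum over $K \in \mathcal{T}_h$, exploiting the bounded overlap of $\{S_K\}$ and the quasi-uniformity $h \sim h_K$. When $r \ge q$, the embedding $\ell^q \hookrightarrow \ell^r$ assembles the right-hand side into $\|\nabla^\ell v\|_{L^q(\Omega)}$ with the exponent $\ell + d(1/r-1/q)$ preserved. When $r < q$, a discrete H\"older inequality over the $N \sim h^{-d}$ simplices costs exactly $h^{-d(1/r-1/q)}$, which absorbs the gain and leaves $h^\ell$. Together the two cases produce the unified power $\ell + d\min\{0, 1/r-1/q\}$, and $\|\nabla^\ell v\|_{L^q(\Omega)}$ is trivially majorized by the full $W^{\ell,q}$-sum appearing on the right-hand side of the proposition. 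The main technical point, and the only place the Sobolev embedding is genuinely needed, is the scaling argument for the non-polynomial term $v-\pi_K$; everything else is bookkeeping once Bramble--Hilbert is in place on each patch $S_K$.
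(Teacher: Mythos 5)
Your proposal is correct and follows essentially the same route as the paper: decompose through a polynomial of degree $\le r_0$ reproduced by $P_h$, upgrade the $W^{\ell,1}$-stability of Assumption~\ref{ass:interpolation}(a) to a local $W^{\ell,q}\to W^{m,r}$ bound via H\"older and the inverse estimate~\eqref{eq:inverse} (Lemma~\ref{lem:lemmaA1}), control the polynomial approximation error by a scaled Bramble--Hilbert/Poincar\'e argument (Lemmas~\ref{lem:lemmaA2}--\ref{lem:lemmaA3}, giving Lemma~\ref{lem:lemmaA4}), and then assemble over $\mathcal T_h$ with the same case split $r\ge q$ versus $r<q$ and the count $\#\mathcal T_h\sim h^{-d}$. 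The one place where you are a little loose is the ``pull back $S_K$ to a reference patch $\widehat{S_K}$'' step: the patches $S_K$ do not all scale to a single fixed reference configuration, so the uniformity of the Bramble--Hilbert constant over $S_K$ has to come from the John-domain property of $S_K$ (as in Lemma~\ref{lem:lemmaA3}); the paper avoids this by running the reference-simplex argument on $K$ alone (Lemma~\ref{lem:lemmaA2}) and invoking the subdomain Poincar\'e--Wirtinger inequality separately to pass from $K$ to $S_K$.
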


To prove Proposition~\ref{prop:int-error} we start by deriving from
Assumption~\ref{ass:interpolation} the non-homogeneous Sobolev stability adapting the
approach in the case of Orlicz stability from~\cite{dr-interpol}
(cf.~\cite[Thm.~3.1]{zhang-scott} for the classical approach).
\begin{lemma}
  \label{lem:lemmaA1}
  Let $P_h$ satisfy Assumption~\ref{ass:interpolation} and let
  $r,q\in[1,\infty)$ be given. Then there exists $c=c(\ell,m,r_0,r_1)$
  such that for all $K\in \mathcal{T}_h$
  \begin{equation*}
    \sum_{j=0}^mh_K^j \left(\dashint\limits_K |\nabla ^j
      P_h\bv|^r\,d\bx\right)^{\frac 1r}\leq c\,\sum_{k=0}^l h_{K}^k
    \left(\dashint\limits_{S_K} 
      |\nabla^k \bv|^q\,d\bx\right)^{ \frac 1q}, 
  \end{equation*}
  or, in a non-averaged way, this can be formulated as follows
\begin{equation*}
  \sum_{j=0}^mh_K^j\,\|\nabla^jP_h \bv\|_{r,K}\leq c\, h_K^{\,d\left(\frac{1}{r}-\frac{1}{q}\right)} \sum_{k=0}^l
  h_{K}^k \|\nabla^k \bv\|_{q,S_{K}}.
\end{equation*}
\end{lemma}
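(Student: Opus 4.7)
The plan is to bootstrap the local $W^{\ell,1}$-to-$W^{m,1}$ stability of $P_h$ given in Assumption~\ref{ass:interpolation}(a) to the claimed $W^{\ell,q}$-to-$W^{m,r}$ stability in three independent steps: first an inverse estimate on the image side, converting the $L^r$-average on $K$ to the $L^1$-average on $K$; then Assumption~\ref{ass:interpolation}(a) itself; and finally Hölder's inequality on the patch $S_K$ to pass from the $L^1$-average up to the $L^q$-average. The passage to the non-averaged form then amounts only to book-keeping of powers of $|K|\sim |S_K|\sim h_K^d$.

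For the first step, since $P_h\bv|_K\in(\mathcal{P}_{r_1}(K))^d$ by the definition of $X_h$, the pointwise inverse estimate~\eqref{eq:inverse} (whose proof uses only the polynomial structure on $K$ and shape-regularity) gives, for all $0\le j\le m$,
\begin{equation*}
  \left(\dashint_K |\nabla^j P_h\bv|^r\,d\bx\right)^{1/r}\le \|\nabla^j P_h\bv\|_{L^\infty(K)}\le c\,\dashint_K |\nabla^j P_h\bv|\,d\bx\,,
\end{equation*}
with $c=c(r_1,\gamma_0,r)$; both sides vanish for $j>r_1$. Multiplying by $h_K^j$, summing over $j=0,\dots,m$ and then invoking Assumption~\ref{ass:interpolation}(a) on the right-hand side yields
\begin{equation*}
  \sum_{j=0}^m h_K^j\left(\dashint_K|\nabla^j P_h\bv|^r\,d\bx\right)^{1/r}
  \le c\sum_{k=0}^\ell h_K^k\dashint_{S_K}|\nabla^k\bv|\,d\bx\,.
\end{equation*}

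Applying Hölder's inequality on $S_K$ term by term, namely $\dashint_{S_K}|\nabla^k\bv|\,d\bx\le\bigl(\dashint_{S_K}|\nabla^k\bv|^q\,d\bx\bigr)^{1/q}$, yields the averaged form of the lemma. For the non-averaged form, multiply the averaged inequality by $|K|^{1/r}$: by shape-regularity $|K|^{1/r}\sim h_K^{d/r}$ and $|S_K|^{1/q}\sim h_K^{d/q}$, so
\begin{equation*}
  |K|^{1/r}\left(\dashint_{S_K}|\nabla^k\bv|^q\,d\bx\right)^{1/q}
  =\frac{|K|^{1/r}}{|S_K|^{1/q}}\|\nabla^k\bv\|_{q,S_K}
  \sim h_K^{d(1/r-1/q)}\,\|\nabla^k\bv\|_{q,S_K}\,,
\end{equation*}
which is exactly the stated scaling factor.

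There is no genuine obstacle here — the argument is a routine bootstrap — so the only point requiring care is the tracking of mesh-size powers under the two successive norm conversions, and keeping the inverse estimate applied only on the image side (where polynomiality allows it) while Hölder's inequality acts only on the input side.
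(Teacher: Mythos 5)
Your proof is correct and follows the same route as the paper's: inverse estimate~\eqref{eq:inverse} to pass from the $L^r$-average to the $L^1$-average on $K$, then Assumption~\ref{ass:interpolation}(a), then Jensen/H\"older to upgrade $L^1$ to $L^q$ on $S_K$, with the non-averaged form obtained by the $|K|\sim|S_K|\sim h_K^d$ rescaling. The only (harmless) discrepancy is that the paper's proof only writes out the averaged chain and leaves the rescaling implicit, whereas you spell it out.
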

\begin{proof}
We can write, using \eqref{eq:inverse}, \eqref{eq:int-error1}, and
H\"older's inequality
  \begin{equation*}
    \begin{aligned}
      \sum_{j=0}^mh_K^j\left(\dashint\limits_K |\nabla ^j
        P_h\bv|^r\,d\bx\right)^{\frac 1r} &\le \sum_{j=0}^mh_K^j \,\|\nabla ^j
      P_h\bv\|_{\infty,K}\leq c\,\sum_{j=0}^mh_K^j \dashint\limits_{K}
      |\nabla^j P_h\bv|\,d\bx
      \\
      &\leq c\sum_{k=0}^\ell h_{K}^{k}\dashint\limits_{S_{K}}|\nabla^k
      \bv|\,d\bx\leq c\sum_{k=0}^\ell
      h_{K}^{k}\left(\dashint\limits_{S_{K}}|\nabla^k
        \bv|^q\,d\bx\right)^{\frac 1q}\,.
    \end{aligned}
  \end{equation*}
\end{proof}
Next, we prove a generalized \Poincare-Sobolev-Wirtinger  inequality 
\begin{lemma}
  \label{lem:lemmaA2}
  Let $\ell\in\setN$ and $q,r\in[1,\infty)$ be such that
  $W^{\ell,q}(K)\hookrightarrow\hookrightarrow W^{m,r}(K)$. Then, there
  exists a constant $c=c(\ell,m,\gamma_0,r,q)$ such that for all
  $\bv\in W^{\ell,q}(K)$ with
  $\dashint\limits_{K}\nabla^k \bv\,d\bx=\bfzero$ for
  $k=0,\dots,\ell-1$ it holds
  \begin{equation*}
      \sum_{j=0}^mh_K^j \, \|\nabla ^j\bv\|_{r,K}\leq c\, h_K^{\ell+d\left(\frac{1}{r}-\frac{1}{q}\right)}\|\nabla^\ell \bv\|_{q,K}.
  \end{equation*}
\end{lemma}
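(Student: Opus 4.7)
My plan is a standard scaling-to-the-reference-simplex argument, whose only real work is an iterated \Poincare{}--Wirtinger bound on the reference element; the assumed compact embedding is then used as a black box, and the conclusion on $K$ follows by tracking powers of $h_K$.

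First I would fix an affine bijection $F_K\colon \hat K \to K$ from a reference simplex~$\hat K$ of unit diameter, and set $\hat\bv := \bv \circ F_K$. Shape-regularity $h_K/\rho_K \le \gamma_0$ gives $|F'_K|\sim h_K$ and $|(F'_K)^{-1}|\sim h_K^{-1}$ with constants depending only on $\gamma_0$, so that for every $j\ge 0$ and every exponent $s$ one has $\|\nabla^j \bv\|_{s,K} \sim h_K^{-j + d/s}\,\|\hat\nabla^j \hat\bv\|_{s,\hat K}$. Since $\dashint_{\hat K} \hat\nabla^k \hat\bv\,d\hat\bx$ is a constant multiple (depending on $F'_K$ but not on $h_K$) of $\dashint_K \nabla^k \bv\,d\bx$, the vanishing-moment hypotheses transfer verbatim to $\hat\bv$ for $k=0,\dots,\ell-1$.

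On $\hat K$ I would then argue entirely in fixed geometry. For each $k\in\{0,\dots,\ell-1\}$ the tensor field $\hat\nabla^k\hat\bv$ has zero mean on $\hat K$, so the classical \Poincare{}--Wirtinger inequality on the bounded Lipschitz domain $\hat K$ gives $\|\hat\nabla^k \hat\bv\|_{q,\hat K}\le c\,\|\hat\nabla^{k+1}\hat\bv\|_{q,\hat K}$. Iterating from $k=\ell-1$ down to $k=0$ yields $\|\hat\bv\|_{W^{\ell,q}(\hat K)}\le c\,\|\hat\nabla^\ell\hat\bv\|_{q,\hat K}$. The hypothesis $W^{\ell,q}(\hat K)\vnor\vnor W^{m,r}(\hat K)$ implies in particular a continuous embedding, hence $\sum_{j=0}^m\|\hat\nabla^j\hat\bv\|_{r,\hat K}\le c\,\|\hat\bv\|_{W^{m,r}(\hat K)}\le c\,\|\hat\nabla^\ell\hat\bv\|_{q,\hat K}$ with a constant depending only on $\ell,m,r,q$ and on~$\hat K$.

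Transporting this back to $K$ via the scaling relation, the $j$-th term on the left picks up the factor $h_K^{-j+d/r}$ and the right-hand side the factor $h_K^{-\ell+d/q}$. Multiplying the $j$-th term by $h_K^j$ therefore produces $h_K^{d/r}\|\hat\nabla^j\hat\bv\|_{r,\hat K}$, which is dominated by $h_K^{d/r-(-\ell+d/q)}\cdot h_K^{-\ell+d/q}\|\hat\nabla^\ell\hat\bv\|_{q,\hat K} = h_K^{\ell+d(1/r-1/q)}\|\nabla^\ell\bv\|_{q,K}$; summing over $j=0,\dots,m$ delivers the claimed estimate with $c=c(\ell,m,\gamma_0,r,q)$. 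The only genuine technical point is verifying that the class of admissible reference simplices can be reduced to a single fixed one (so that the constant from the iterated \Poincare{} step does not depend on $K$), which is precisely what shape-regularity ensures; the remaining steps are routine bookkeeping of $h_K$-powers.
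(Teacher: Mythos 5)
Your proof is correct, and it takes a genuinely different route from the one in the paper, so a brief comparison is worthwhile. The paper first establishes the scale-free estimate on a generic simplex $K$ by a compactness contradiction argument: for each $j$ it normalizes a putative counter-example sequence, uses the compact embedding $W^{\ell-j,q}(K)\hookrightarrow\hookrightarrow L^r(K)$ to extract a strongly convergent subsequence, shows the limit must be a polynomial with vanishing moments (hence zero), and only then treats the case $r<q$ separately by reducing to $r=q$ and applying H\"older. It then repeats the argument on the reference simplex $\widehat K$ and scales back to track the dependence of the constants on $h_K$. You, by contrast, scale first, then on the fixed reference simplex establish $\|\hat\bv\|_{W^{\ell,q}(\widehat K)}\le c\,\|\hat\nabla^\ell\hat\bv\|_{q,\widehat K}$ by iterating the ordinary \Poincare{}--Wirtinger inequality (using the vanishing mean of $\hat\nabla^k\hat\bv$ for $k=0,\dots,\ell-1$, which does indeed transfer under the affine change of variables by linearity), and finally invoke the continuous embedding $W^{\ell,q}(\widehat K)\hookrightarrow W^{m,r}(\widehat K)$. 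This avoids the case distinction and the explicit contradiction argument entirely, and in fact only uses the \emph{continuous} embedding rather than the compact one. The tradeoff is that the \Poincare{}--Wirtinger inequality you quote as a black box is essentially the same compactness content that the paper spells out; your version packages it more cleanly and is arguably the slicker write-up. One small imprecision to tighten: the scaling relations $\|\nabla^j\bv\|_{s,K}\sim h_K^{-j+d/s}\|\hat\nabla^j\hat\bv\|_{s,\widehat K}$ are two-sided inequalities with constants depending on $\gamma_0$, not identities; the conclusion is unaffected, but the way you combine them should be phrased as a chain of one-sided bounds to make the bookkeeping explicit, exactly as the paper does in the displayed computation at the end of its proof.
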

\begin{proof} 
  Let us first show that for every $j=0,\ldots, m$ there exists $c_j>0$ (depending on $K$) such
  that there holds $\|\nabla ^j \bv\|_{r,K}\leq
  c_j\|\nabla^\ell \bv\|_{q,K}$. Fix $j$ and assume per absurdum
  that there exists $\{\widetilde{\bv}_n\}\subset W^{\ell,q}(K)$ such that
  \begin{equation*}
         \|\nabla ^j\widetilde{\bv}_n\|_{r,K}> n\|\nabla^\ell\widetilde{\bv}_n\|_{q,K}.
  \end{equation*}
Setting  $\bv_n:=\frac{\widetilde{\bv}_n}{\|\nabla
  ^j\widetilde{\bv}_n\|_{r,K}}$  we get 
\begin{equation}
  \label{eq:imbedding}
  \|\nabla ^j\bv_n\|_{r,K}=1\qquad\text{and}\qquad   \|\nabla^\ell \bv_n\|_{q,K}<\frac{1}{n}.
\end{equation}
Note that  $\|\bw\|_{q,K}+\|\nabla^{\ell-j} \bw\|_{q,K}$ is an equivalent norm
on $W^{\ell-j,q}(K)$ (cf.~\cite[p.~179]{trieb}).
We have to distinguish the cases $r\ge q$ and $r<q$.

\noindent\textbf{Case 1:} $r\geq q$. The sequence $\{\nabla ^j\bv_n\}$ is bounded in
$W^{\ell-j,q}(K)\hookrightarrow\hookrightarrow L^{r}(K)$, hence there exists a sub-sequence
(relabelled as $\{\nabla ^j \bv_n\}$) such that $\nabla ^j \bv_n\to \bV$ strongly in $L^r(K)$ and
$\|\bV\|_{r,K}=1$. This and~\eqref{eq:imbedding} imply that
$\{\nabla ^j\bv_n\}$ is a Cauchy
sequence in $W^{\ell-j,q}(K)$, hence  $\nabla ^j\bv_n\to \bW$ in $W^{\ell-j,q}(K)$. Uniqueness of
the limit implies that $\bW=\bV$. This proves that $\nabla ^j \bv_n\to
\bV$ in $W^{\ell-j,q}(K)$. Moreover, \eqref{eq:imbedding} implies
$\|\nabla^{\ell-j} \bV\|_{q,K}=0$, hence that $\bV\in
\mathcal{P}_{\ell-j-1}$. Next, the convergence in 
$W^{\ell-j,q}(K)$ implies that also the averages converge. Hence
\begin{equation*}
  \bfzero =\dashint\limits_K \nabla ^k \nabla ^j\bv_n\,d\bx \to \dashint\limits_K \nabla^k \bV\,d\bx\qquad \text{for
  }k=0,\dots,\ell-j-1, 
\end{equation*}
but as $\bV$ is polynomial of degree less or equal than $\ell-j$, this implies that $\bV=\bfzero$.
Thus,  $\|\bV\|_{r,K}=0$, contradicting the fact that $\|\bV\|_{r,K}=1$.

\noindent\textbf{Case 2:} $r< q$.  In this case the same argument as in the previous case shows that 
\begin{equation*}
  \|\nabla ^j\bv\|_{q,K}\leq c_j\|\nabla^\ell \bv\|_{q,K},
\end{equation*}
and then by H\"older's inequality 
\begin{equation*}
  \|\nabla ^j\bv\|_{r,K}\leq c \,\|\nabla ^j\bv\|_{q,K}\leq c\,c_j\,\|\nabla^l \bv\|_{q,K},
\end{equation*}
with $c=c(K)$.

To prove how the constants $c_j$ depend on $K$ we proceed as follows: 
We pass from a generic simplex $K$ to the reference simplex $\widehat{K}$,
use the previous inequalities in the reference domain with constants depending only
$\widehat{K}$, and then we come back to the original simplex $K$. This
shows for every $j=0,\ldots , m$
\begin{equation*}
  \begin{aligned}
    \|\nabla ^j \bv\|_{r,K}^r&\simeq h_K^d\|\widehat{\nabla ^j\bv}\|_{r,\widehat{K}}^r
    \\
    &    \leq h_K^d
    c_j(\widehat{K})\|\widehat{\nabla}^{\ell-j}\widehat{\nabla ^j\bv}\|_{q,\widehat{K}}^r
    \simeq h_K^d
    c_j({\widehat{K}})h_{K}^{\left(\ell-j-\frac{d}{q}\right)r}\|{\nabla}^\ell
    {\bv}\|_{q,{K}}^r.
  \end{aligned}
\end{equation*}
Hence, we get 
\begin{equation*}
    h_K^j\|\nabla ^j\bv\|_{r,K} \leq c_j({\widehat{K}}) h_K^{\ell+d\left(\frac{1}{r}-\frac{1}{q}\right)}\|\nabla^\ell\bv\|_{q,K}\,,
  \end{equation*}
  which implies the assertion with $c=\sum_{j=0}^mc_j(\widehat K)$.
\end{proof}
We recall now a \Poincare-Wirtinger type inequality, where it is possible to replace the
average over the whole domain $G$ with that one over a sub-domain $A\subset
G$ (cf.~\cite[Cor.~8.2.6]{lpx-book}, \cite[Ch.~7.8]{gilbarg-trudinger}), provided that $G$ is an
$\alpha$-John domain and 
\begin{equation*} 
  |A| \simeq |G|\,.
\end{equation*}
Note that, due to our assumptions on the triangulation, we have that
$S_K$ are $\alpha$-John domains, where $\alpha$ depends only on
$\gamma_0$, and that $\abs{K }\simeq \abs{S_K}$ for all $K \in
\mathcal{T}_h$.
\begin{lemma}
  \label{lem:lemmaA3}  There exists a constant $c=c(d,\gamma_0)$ such that 
  \begin{equation*}
    \left\|\bv-\mean{\bv}_{{K}}\right\|_{q,S_{K}}\leq c\, h_K\,\|\nabla \bv\|_{q,S_{K}}\qquad
    \forall\, \bv\in W^{1,q}(S_{K})\,.
  \end{equation*}
\end{lemma}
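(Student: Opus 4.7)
The plan is to reduce the statement to the standard Poincaré--Wirtinger inequality on the John domain $S_K$ (whose John constant depends only on $\gamma_0$) and then swap the averaging set from $S_K$ to $K$ via the triangle inequality, exploiting $|K|\sim |S_K|$.

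First I would invoke the Poincaré--Wirtinger inequality quoted in the preceding paragraph (with $A=G=S_K$), which gives
\begin{equation*}
  \|\bv-\mean{\bv}_{S_K}\|_{q,S_K}\le c\,\diam(S_K)\,\|\nabla\bv\|_{q,S_K}\le c\,h_K\,\|\nabla\bv\|_{q,S_K},
\end{equation*}
the second inequality being a consequence of shape-regularity, since $\diam(S_K)\lesssim h_K$ uniformly in $h$ and $K$.

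Next, I would split
\begin{equation*}
  \|\bv-\mean{\bv}_K\|_{q,S_K}\le \|\bv-\mean{\bv}_{S_K}\|_{q,S_K}+\|\mean{\bv}_{S_K}-\mean{\bv}_K\|_{q,S_K},
\end{equation*}
and control the constant difference by writing $\mean{\bv}_{S_K}-\mean{\bv}_K=\dashint_K(\mean{\bv}_{S_K}-\bv)\,d\bx$. Hölder's inequality together with $K\subset S_K$ and $|K|\sim |S_K|$ yields
\begin{equation*}
  |\mean{\bv}_{S_K}-\mean{\bv}_K|\le \Big(\dashint_K|\bv-\mean{\bv}_{S_K}|^q\,d\bx\Big)^{1/q}\le c\,|S_K|^{-1/q}\,\|\bv-\mean{\bv}_{S_K}\|_{q,S_K}.
\end{equation*}
Multiplying by $|S_K|^{1/q}$ (to turn the constant into its $L^q(S_K)$-norm) and applying the first step, the cross term is bounded by $c\,h_K\,\|\nabla\bv\|_{q,S_K}$, which combined with the first step gives the assertion.

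The argument is essentially routine; the only mild obstacle is verifying that the constant depends only on $d$ and $\gamma_0$, which is ensured because the John constant of $S_K$, the diameter-to-$h_K$ ratio, and the ratio $|S_K|/|K|$ are all controlled by the shape-regularity parameter $\gamma_0$ alone.
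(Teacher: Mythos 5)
Your proof is correct. The paper does not argue this lemma from scratch; it invokes a Poincar\'e--Wirtinger-type inequality on $\alpha$-John domains in which the average is taken over a sub-domain of comparable measure (citing \cite{lpx-book} and \cite{gilbarg-trudinger}), noting that $S_K$ is $\alpha$-John with $\alpha$ depending only on $\gamma_0$ and that $|K|\simeq|S_K|$. You instead start from the standard Poincar\'e--Wirtinger inequality on $S_K$ (average over the full domain) and then transfer the average from $S_K$ to $K$ by the triangle inequality, controlling the constant difference $\mean{\bv}_{S_K}-\mean{\bv}_K$ via Jensen/H\"older and the volume comparison. This is a genuinely more elementary and self-contained route, relying only on the basic form of the John-domain Poincar\'e inequality plus $|K|\sim|S_K|$ and $\diam(S_K)\lesssim h_K$, all of which are controlled by $\gamma_0$; the paper's version buys brevity by citing the more refined sub-domain-averaged statement directly, while yours buys transparency about exactly where $|K|\sim|S_K|$ enters. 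Both are valid, with the constant depending only on $d$ and $\gamma_0$ in each case.
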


This enables us to prove a local variant of
Proposition~\ref{prop:int-error}. 
\begin{lemma}
  \label{lem:lemmaA4}
    Let $P_h$ satisfy Assumption~\ref{ass:interpolation} with
    $\ell \le r_0+1$ and let $r,q\in[1,\infty)$ be such that
    $W^{\ell,q}(\Omega) \vnor \vnor W^{m,r}(\Omega)$. Then there exists
  $c=c(\ell,m,r_0,r_1,\gamma_0,r,q,d)$ such that for all $\bv \in
  W^{\ell,q}(\Omega)$ and all $K \in
  \mathcal{T}_h$
  \begin{equation*}
     \sum_{j=0}^mh_K^j \, \|\nabla ^j\bv- \nabla ^jP_h \bv\|_{r,K}\leq c\, h_K^{\ell+d\left(\frac{1}{r}-\frac{1}{q}\right)}\|\nabla^\ell \bv\|_{q,S_K}.
  \end{equation*}
\end{lemma}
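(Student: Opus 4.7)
My plan is to combine a Bramble--Hilbert style polynomial approximation argument with the non-homogeneous $W^{k,r}$-stability of $P_h$ already proved in Lemma~\ref{lem:lemmaA1}, using Lemma~\ref{lem:lemmaA3} to deal with the error piece on the patch.

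First, since $\ell \le r_0+1$ we have $\mathcal{P}_{\ell-1}\subseteq\mathcal{P}_{r_0}$, so by Assumption~\ref{ass:interpolation}(b), $P_h\pi=\pi$ for every $\pi\in(\mathcal{P}_{\ell-1})^d$. I would pick a polynomial $\pi\in (\mathcal{P}_{\ell-1})^d$ (for instance an averaged Taylor polynomial of $\bv$ over $S_K$) such that
\begin{equation*}
\dashint_{S_K}\nabla^k(\bv-\pi)\,d\bx=\bfzero,\qquad k=0,\dots,\ell-1.
\end{equation*}
Such a $\pi$ exists and is unique because the linear map from $(\mathcal{P}_{\ell-1})^d$ to the tuple of these averages has trivial kernel (its kernel consists of polynomials of degree $\le\ell-1$ all of whose lower-order derivative means vanish). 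Then I would decompose
\begin{equation*}
\nabla^j(\bv-P_h\bv)=\nabla^j(\bv-\pi)-\nabla^j P_h(\bv-\pi)
\end{equation*}
and use the triangle inequality.

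For the interpolation term I would apply Lemma~\ref{lem:lemmaA1} to $\bv-\pi$, getting
\begin{equation*}
\sum_{j=0}^m h_K^j\|\nabla^j P_h(\bv-\pi)\|_{r,K}
\le c\, h_K^{d(1/r-1/q)}\sum_{k=0}^\ell h_K^k\|\nabla^k(\bv-\pi)\|_{q,S_K}.
\end{equation*}
Since $\nabla^k(\bv-\pi)$ has zero mean on $S_K$ for $k=0,\dots,\ell-1$ and $S_K$ is a John domain with constants depending only on $\gamma_0$ (so that Lemma~\ref{lem:lemmaA3} applies on $S_K$), an iterated use of Poincar\'e--Wirtinger yields
\begin{equation*}
\|\nabla^k(\bv-\pi)\|_{q,S_K}\le c\, h_K^{\ell-k}\|\nabla^\ell\bv\|_{q,S_K},\qquad k=0,\ldots,\ell,
\end{equation*}
where I used $\nabla^\ell\pi\equiv\bfzero$ in the last step. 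Substituting gives the bound $c\, h_K^{\ell+d(1/r-1/q)}\|\nabla^\ell\bv\|_{q,S_K}$ for the interpolation term.

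For the approximation term $\sum_{j=0}^m h_K^j\|\nabla^j(\bv-\pi)\|_{r,K}\le\sum_{j=0}^m h_K^j\|\nabla^j(\bv-\pi)\|_{r,S_K}$ I would use a scaling argument: pull $\bv-\pi$ back to the reference patch $\widehat{S}_K$ via the natural affine map (which has Jacobian comparable to $h_K^d$), apply on $\widehat{S}_K$ exactly the same argument as in the proof of Lemma~\ref{lem:lemmaA2} (which only uses the compact embedding $W^{\ell,q}\vnor\vnor W^{m,r}$, the Poincar\'e--Wirtinger inequality, and the vanishing of averages of derivatives of order $<\ell$, all of which hold on $\widehat{S}_K$ with constants depending only on $\gamma_0$, because shape-regularity bounds the combinatorial type of $\widehat{S}_K$ to finitely many configurations), and scale back. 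This yields
\begin{equation*}
\sum_{j=0}^m h_K^j\|\nabla^j(\bv-\pi)\|_{r,K}\le c\, h_K^{\ell+d(1/r-1/q)}\|\nabla^\ell\bv\|_{q,S_K},
\end{equation*}
and combining the two pieces closes the proof.

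The main obstacle I expect is the scaling step on $S_K$ rather than on the simplex $K$: Lemma~\ref{lem:lemmaA2} was formulated only for a single simplex, so I have to justify that the corresponding estimate with vanishing averages on the patch is still uniform in $K$. This is resolved exactly by the shape-regularity assumption, which guarantees that the family of reference patches $\{\widehat{S}_K\}$ splits into finitely many combinatorial classes of uniformly comparable geometry, on each of which the required compact embedding and Poincar\'e-type inequality hold with a uniform constant.
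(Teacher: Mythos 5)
Your argument is correct in spirit but takes a more laborious route than the paper, and the extra complication stems from one choice that can be optimized. You fix the polynomial $\pi$ by matching $S_K$\emph{-averages} of the derivatives of $\bv$, i.e.\ $\dashint_{S_K}\nabla^k(\bv-\pi)\,d\bx=\bfzero$ for $k<\ell$. The paper instead matches $K$\emph{-averages}: it picks $\boldsymbol p\in\mathcal P_{\ell-1}$ with $\dashint_{K}\nabla^k(\bv-\boldsymbol p)\,d\bx=\bfzero$. That small change makes the two halves of the estimate fall out immediately. For the approximation piece $\sum_j h_K^j\|\nabla^j(\bv-\boldsymbol p)\|_{r,K}$, the vanishing of the $K$-averages is precisely the hypothesis of Lemma~\ref{lem:lemmaA2}, which is stated and proved on a single simplex; so no Bramble--Hilbert-on-the-patch scaling argument is needed at all, and the bound $c\,h_K^{\ell+d(1/r-1/q)}\|\nabla^\ell\bv\|_{q,K}\le c\,h_K^{\ell+d(1/r-1/q)}\|\nabla^\ell\bv\|_{q,S_K}$ follows directly. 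For the interpolation piece one applies Lemma~\ref{lem:lemmaA1} as you do, but then the quantities $\|\nabla^k(\bv-\boldsymbol p)\|_{q,S_K}$ with vanishing $K$-averages are exactly what Lemma~\ref{lem:lemmaA3} is designed for: that lemma is the mixed Poincar\'e--Wirtinger bound $\|\bw-\mean{\bw}_K\|_{q,S_K}\le c\,h_K\|\nabla\bw\|_{q,S_K}$, i.e.\ the mean is subtracted over the subset $K$ but the norms are on the patch. Iterating it in $k$ gives $\|\nabla^k(\bv-\boldsymbol p)\|_{q,S_K}\le c\,h_K^{\ell-k}\|\nabla^\ell\bv\|_{q,S_K}$, and the proof closes.

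Two concrete points to fix in your write-up. First, with your choice of $S_K$-averages you cannot literally cite Lemma~\ref{lem:lemmaA3} for the interpolation piece, since that lemma subtracts the $K$-mean, not the $S_K$-mean; you would have to invoke the ordinary John-domain Poincar\'e inequality on $S_K$ instead, which is fine but not the cited lemma. Second, your treatment of the approximation piece, pulling $\bv-\pi$ back to a reference patch and re-running the Lemma~\ref{lem:lemmaA2} compactness argument there, is a genuine extra step: you must argue uniformity of the compactness constant across the (finitely many, up to combinatorial equivalence) reference patches, which is subtler than the single-simplex case since the geometry of $S_K$ is less rigid under shape-regular refinement. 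This can be made to work, but it is exactly the complication the paper's choice of $\boldsymbol p$ avoids.
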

\begin{proof}
  We split the interpolation error by adding and subtracting a
  polynomial $\boldsymbol p$ of degree
  less than $\ell$ and use Assumption~\ref{ass:interpolation} (b) and
  Lemma~\ref{lem:lemmaA1} to get for all $j=0,\ldots, m$
  \begin{equation*}
    \begin{aligned}
      &\sum_{j=0}^mh_K^j \, \|\nabla ^j\bv- \nabla ^jP_h \bv\|_{r,K}
      \\
      &\leq \sum_{j=0}^mh_K^j \, \|\nabla ^j\bv- \nabla ^j\boldsymbol
      p\|_{r,K}  +\sum_{j=0}^mh^j \, \|\nabla ^j P_h\big ( \bv
      -\boldsymbol p\big)\|_{r,K}
      \\
      &\leq \sum_{j=0}^mh_K^j \, \|\nabla ^j\bv- \nabla ^j\boldsymbol p\|_{r,K}+c\, h_K^{d\left(\frac{1}{r}-\frac{1}{q}
        \right)}\sum_{k=0}^\ell h_K^k\|\nabla^k(\bv-\boldsymbol p)\|_{r,S_K}\,.
    \end{aligned}
  \end{equation*}
Since $l\leq r_0+1$ we can use Lemma~\ref{lem:lemmaA2} to infer that for all polynomials $\boldsymbol p$ such that $\dashint\limits_K\nabla^k
\bv\, d\bx=\dashint\limits_K\nabla^k\boldsymbol p\,d\bx$, for
$k=0,\dots,\ell-1$, we have 
\begin{equation*}
  \sum_{j=0}^mh_K^j \, \|\nabla ^j\bv-\nabla ^j \boldsymbol p\|_{r,K}\leq c\,
  h_K^{\ell+d\left(\frac{1}{r}-\frac{1}{q}\right)}\|\nabla^\ell
  \bv\|_{q,K} \le c\,  h_K^{\ell+d\left(\frac{1}{r}-\frac{1}{q}\right)}\|\nabla^\ell
  \bv\|_{q,S_K} \,.
\end{equation*}
For the same polynomials we have $\dashint _K \nabla ^k (\bv
-\boldsymbol p)\, d\bx =0$, $k=0,\dots,\ell-1$, and thus,
Lemma~\ref{lem:lemmaA3} yields for $k=0,\dots,\ell-1$
$$
  \|\nabla^k(\bv-\boldsymbol p)\|_{q,S_K}\leq c
  \,h_K^{\ell-k}\|\nabla^\ell(\bv-\boldsymbol p)\|_{q,S_K}=c
\,h_K^{\ell-k}\|\nabla^\ell\bv\|_{q,S_K}\,.
$$
The last three inequalities prove the assertion. 
\end{proof}
We now have all results to prove Proposition~\ref{prop:int-error}. 
\begin{proof}[Proof of Proposition~\ref{prop:int-error}]
  We split the integration over $\Omega$ into a sum over $K$, and then
  we use Lemma~\ref{lem:lemmaA4} to get for each $j \in \set{0,\ldots,
  m}$
 \begin{align*}
   \|\nabla ^j\bv- \nabla ^jP_h  \bv\|_{r,\Omega}^r
   &=
     \sum_{K\in \mathcal{T}_h}   \|\nabla ^j\bv- \nabla ^jP_h \bv\|_{r,K} ^r
   \\
   &\leq c\sum_{K\in \mathcal{T}_h} h_K^{\ell r-jr+dr\left(\frac{1}{r}-\frac{1}{q}\right)}\|\nabla^\ell \bv\|_{q,S_K}^r.
 \end{align*}
 We set now $\alpha_K:=\|\nabla^\ell \bv\|_{q,S_K}^q$ and observe that $\nabla^\ell \bv\in
 L^{q}(\Omega)$ is equivalent to $\alpha_K\in \ell^1(\setN)=\ell^1$. We use H\"older inequality
 in the $\ell^q$ spaces to estimate the right-hand side.
We distinguish again the two cases $q\le r$ and $q>r$.

\noindent\textbf{Case 1:} $q\leq r$. 
In this case, since $\frac{r}{q}-1\geq0$ and since for $\{a_n\}\subset \ell^1$ it holds
$\|a_n\|_{\ell^\infty}\leq \|a_n\|_{\ell^1}$, we can write
\begin{equation*}
  \begin{aligned}
    \sum_{K\in\mathcal{T}_h}\|\nabla^\ell \bv\|_{q,S_{K}}^{r}&=
    \sum_{K\in\mathcal{T}_h}\alpha_K^{\frac{r}{q}}=
    \sum_{K\in\mathcal{T}_h}\alpha_K\,\alpha_K^{\frac{r}{q}-1} 
    \\
    &\leq
    \sum_{K\in\mathcal{T}_h}\alpha_K\,\|\alpha_K\|_{\ell^\infty}^{\frac{r}{q}-1}\le 
    \|\alpha_K\|_{\ell^\infty}^{\frac{r}{q}-1} \|\alpha_K\|_{\ell^1}    \leq     \|\alpha_K\|_{\ell^1}^{\frac{r}{q}}.
  \end{aligned}
\end{equation*}

\noindent\textbf{Case 2:} $q> r$.
In this case 
\begin{equation*}
    \sum_{K\in\mathcal{T}_h} \alpha_K^{\frac{r}{q}}\leq\left(\sum_{K\in\mathcal{T}_h}\alpha_K\right)^{\frac{r}{q}}
    \left(\sum_{K\in\mathcal{T}_h}1\right)^{1-\frac{r}{q}}=\left(\sum_{K\in\mathcal{T}_h}\alpha_K\right)^{\frac{r}{q}}
    \left(\#K\right)^{1-\frac{r}{q}} \,.
\end{equation*}
Since $h \sim h_K$ uniformly in $\mathcal T_h$ we get $|\Omega|\simeq
\#K\, h^d$ and thus $\#K\sim h^{-d}$. Hence we obtain 
\begin{equation*}
    \sum_{K\in\mathcal{T}_h} \alpha_K^{\frac{r}{q}}\leq  c\,h^{-dr\left(\frac{1}{r}-\frac{1}{q}\right)}
    \|\alpha_K\|_{\ell^1}^{\frac{r}{q}}.
\end{equation*}
Putting the two cases together, using $h_K \le h$ and
$W^{\ell,q}(\Omega)\vnor W^{m,r}(\Omega)$, we proved for each $j \in
\set{0,\ldots, m}$
 \begin{equation*}
   \|\nabla ^j\bv- \nabla ^jP_h  \bv\|_{r,\Omega}^r
   \leq c\, h^{\ell r-jr+dr\max\left\{0,\left(\frac{1}{q}-\frac{1}{r}\right)\right\}}
   \left(\sum_{K\in \mathcal{T}_h}\|\nabla^\ell \bv\|_{q,S_K}^q\right)^{\frac{r}{q}}\,.
 \end{equation*}
Taking the $r$-th root, multiplying by $h^{j}$ and summing up over $j=0,\ldots,m$ proves the assertion, since $\left(\sum_{K\in
    \mathcal{T}_h}\|\nabla^\ell
  \bv\|_{q,S_K}^q\right)^{\frac{1}{q}}\le c\,
\norm{\nabla ^\ell \bv}_{q,\Omega}$.
\end{proof}

Finally, we prove the following version of the continuity of the interpolation
operator $P_h$ in Orlicz spaces.

\begin{lemma}\label{lem:interpolation}
Let $\bF(\bD \bv) , \bF(\bD \bw) \in
    W^{1,2}(\Omega)$. Then there exists a constant
    $c=c(p,r_1,\gamma_0)$ such that 

\begin{align*}
  \begin{aligned}
    &\int\limits_\Omega \phi_{|\bD\bfv|}\big(\big|\bD P_h{\bv}
    -\bD P_h\bfw \big|\big)\,\dx
\leq c \,h^2\|\nabla\bF(\bD\bv)\|^2_2
    +c\,\|\bF(\bD\bv)-\bF(\bD\bw)\|^2_2 \,,
      \end{aligned}
\end{align*}
where the constants depends only on $\gamma_0$ and $p$.
\end{lemma}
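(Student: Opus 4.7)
The plan is to work locally on each simplex $K\in\mathcal T_h$ and sum using the uniformly bounded overlap of the patches $\{S_K\}$. Write $\boldsymbol{g}:=\bv-\bw$ for brevity. On each $K$ I choose a rigid motion $\boldsymbol{L}_K(\bx)=\boldsymbol{A}_K\bx+\boldsymbol{b}_K$ with $\boldsymbol{A}_K$ antisymmetric, such that $\mean{\boldsymbol{g}-\boldsymbol{L}_K}_{S_K}=\boldsymbol{0}$ and the antisymmetric part of $\mean{\nabla(\boldsymbol{g}-\boldsymbol{L}_K)}_{S_K}$ vanishes; these two normalizations determine $(\boldsymbol{A}_K,\boldsymbol{b}_K)$ uniquely. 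Because $P_h$ preserves affine functions (Assumption~\ref{ass:interpolation}(b) with $r_0\ge 1$) and $\bD\boldsymbol{L}_K=\boldsymbol{0}$, we have $\bD P_h\boldsymbol{g}=\bD P_h(\boldsymbol{g}-\boldsymbol{L}_K)$ on $K$, which effectively removes the antisymmetric part of $\nabla\boldsymbol{g}$ without disturbing $\bD\boldsymbol{g}$. Combining the inverse estimate~\eqref{eq:inverse}, the local $W^{1,1}$-stability of Assumption~\ref{ass:interpolation}(a), and the \Poincare{} inequality (justified by $\mean{\boldsymbol{g}-\boldsymbol{L}_K}_{S_K}=\boldsymbol 0$) will then produce the pointwise bound $|\bD P_h\boldsymbol{g}(\bx)|\le c\dashint_{S_K}|\nabla(\boldsymbol{g}-\boldsymbol{L}_K)|$ for every $\bx\in K$.

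Next I introduce the constant shift $\boldsymbol{Q}_K:=\mean{\bD\bv}_{S_K}$ and apply Proposition~\ref{lem:hammer}(ii) pointwise to change the shift from $|\bD\bv(\bx)|$ to $|\boldsymbol{Q}_K|$:
\[
\phi_{|\bD\bv(\bx)|}(|\bD P_h\boldsymbol{g}(\bx)|)\le c\,\phi_{|\boldsymbol{Q}_K|}(|\bD P_h\boldsymbol{g}(\bx)|)+c\,|\bF(\bD\bv(\bx))-\bF(\boldsymbol{Q}_K)|^2.
\]
Since $\phi_{|\boldsymbol{Q}_K|}$ is a convex N-function whose $\Delta_2$-constants depend only on $p$, the pointwise bound above together with Jensen's inequality gives $\phi_{|\boldsymbol{Q}_K|}(|\bD P_h\boldsymbol{g}(\bx)|)\le c\dashint_{S_K}\phi_{|\boldsymbol{Q}_K|}(|\nabla(\boldsymbol{g}-\boldsymbol{L}_K)|)$. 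The core of the argument is then to invoke an Orlicz--Korn inequality on the patch $S_K$ for the constant-shift N-function $\phi_{|\boldsymbol{Q}_K|}$ (available for the whole family $\{\phi_a\}_{a\ge 0}$, see~\cite{dr-nafsa,bdr-phi-stokes}); together with the normalization of $\boldsymbol{L}_K$ this yields $\int_{S_K}\phi_{|\boldsymbol{Q}_K|}(|\nabla(\boldsymbol{g}-\boldsymbol{L}_K)|)\le c\int_{S_K}\phi_{|\boldsymbol{Q}_K|}(|\bD\boldsymbol{g}|)$. Swapping the shift back via Proposition~\ref{lem:hammer}(ii) and using Proposition~\ref{lem:hammer}(i) to identify $\phi_{|\bD\bv|}(|\bD\boldsymbol{g}|)\sim|\bF(\bD\bv)-\bF(\bD\bw)|^2$ then bounds $\int_K\phi_{|\bD\bv|}(|\bD P_h\boldsymbol{g}|)$ by $c\int_{S_K}|\bF(\bD\bv)-\bF(\bD\bw)|^2+c\int_{S_K}|\bF(\bD\bv)-\bF(\boldsymbol{Q}_K)|^2$.

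The residual shift error is handled by Proposition~\ref{lem:hammer}(iii) applied on $S_K$, which gives $\int_{S_K}|\bF(\bD\bv)-\bF(\mean{\bD\bv}_{S_K})|^2\sim\int_{S_K}|\bF(\bD\bv)-\mean{\bF(\bD\bv)}_{S_K}|^2$, and this in turn is bounded by $c\,h_K^2\int_{S_K}|\nabla\bF(\bD\bv)|^2$ via the standard \Poincare--Wirtinger inequality for $\bF(\bD\bv)\in W^{1,2}(\Omega)$. Summing the resulting local estimates over $K\in\mathcal T_h$ and using the finite overlap of $\{S_K\}$ produces the claim. The main difficulty is precisely the Orlicz--Korn step: since Korn's inequality in $L^1$ fails in general, one must exploit the shifted N-function framework (and the specific rigid-motion subtraction $\boldsymbol{L}_K$) to pass from $\nabla\boldsymbol{g}$ to $\bD\boldsymbol{g}$; it is also this same device that makes the whole argument go through without any regularity of $\bw$ beyond $\bF(\bD\bw)\in L^2$, in line with the weaker hypothesis of Proposition~\ref{prop:Ph}(iii).
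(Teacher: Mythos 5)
Your proposal is correct and follows essentially the same route as the paper's proof: localize to a simplex $K$, make back-and-forth shift changes via Proposition~\ref{lem:hammer}\,(ii) to the constant shift $|\mean{\bD\bv}_{S_K}|$, pass from $\bD P_h(\bv-\bw)$ on $K$ to $\nabla(\bv-\bw)$ on $S_K$ using the local $W^{1,1}$-stability of $P_h$ together with the inverse estimate~\eqref{eq:inverse}, a \Poincare{} inequality and Jensen, then apply the Orlicz--Korn inequality to replace $\nabla$ by $\bD$, and finally control the residual shift-change term through Proposition~\ref{lem:hammer}\,(iii) and \Poincare{}--Wirtinger for $\bF(\bD\bv)$. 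Your write-up is somewhat more explicit than the paper's at the Korn step: you spell out the rigid-motion subtraction $\boldsymbol L_K$ (normalizing both the mean of $\bv-\bw-\boldsymbol L_K$ and the antisymmetric part of the mean gradient over $S_K$) and note that $P_h$ reproduces $\boldsymbol L_K$ so that $\bD P_h(\bv-\bw)$ is unchanged, whereas the paper compresses this into a single citation of the Orlicz--Korn inequality in~\cite{john}. You also correctly observe that the argument only needs $\bF(\bD\bw)\in L^2(\Omega)$ (as stated in Proposition~\ref{prop:Ph}\,(iii)), the $W^{1,2}$ hypothesis on $\bF(\bD\bw)$ in the lemma's statement being superfluous.
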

  \begin{proof}
    Using again  $\int_\Omega f \,d\bx =\sum_{K \in \mathcal{T}_h}
    \int_K f \, d\bx$, it suffices to treat one simplex $K$.
    We obtain, thanks to back and forth shift changes
    (cf.~Proposition~\ref{lem:hammer} (ii)), the properties of the
    interpolation operator, Korn's inequality
    (cf.~\cite[Thm.~6.13]{john}) and
    Proposition~\ref{lem:hammer} (i), Poincar\'e's inequality applied
    to $\bF(\bD\bu)$ in $L^2(S_K)$ and Proposition~\ref{lem:hammer} (iii),  the
    properties of the triangulation, the following chain of inequalities
    \allowdisplaybreaks
    \begin{align*}
      &\int\limits\limits\limits_K
       \phi_{|\bD\bfv|}\big(\big|\bD P_h\bv -\bD P_h\bfw\big|\big)\,d\bx
        \\
        &\leq
        c_\delta\int\limits\limits_K\phi_{|\langle\bD\bfv\rangle_{S_K}|}\big(\big|\bD P_h\bv -\bD P_h\bfw\big|\big)\,d\bx
        +\delta \int\limits\limits_K\phi_{|\langle\bD\bfv\rangle_{S_K}|}\big(\big|\bD\bv-\langle\bD\bv\rangle_{S_K}\big|\big)\,d\bx
        \\
        &\leq
        c\, c_\delta\int\limits\limits_{S_K}\phi_{|\langle\bD\bfv\rangle_{S_K}|}\big(\big|\nabla\bv-\nabla\bfw\big|\big)\,d\bx
        +\delta\int\limits\limits_K\phi_{|\langle\bD\bfv\rangle_{S_K}|}\big(\big|\bD\bv-\langle\bD\bv\rangle_{S_K}\big|\big)\,d\bx
        \\
        &\leq
        c\, c_\delta\int\limits\limits_{S_K}\phi_{|\langle\bD\bfv\rangle_{S_K}|}\big(\big|\bD\bv-\bD\bfw)\big|\big)\,d\bx
        +\delta\int\limits\limits_K\phi_{|\langle\bD\bfv\rangle_{S_K}|}\big(\big|\bD\bv-\langle\bD\bv\rangle_{S_K}\big|\big)\,d\bx
        \\
        &\leq
        c_\delta\int\limits\limits_{S_K}\phi_{|\bD\bfv|}\big(\big|\bD\bv-\bD\bfw)\big|\big)\,d\bx
        +\delta\int\limits\limits_{S_K}\phi_{|\langle\bD\bfv\rangle_{S_K}|}\big(\big|\bD\bv-
        \langle\bD\bv\rangle_{S_K}\big|\big)\,d\bx
        \\
        &\leq c_\delta\,\|\bF(\bD\bw)-\bF(\bD\bv)\|^2_{2,S_K}
        +\delta\int\limits\limits_{S_K}|\bF(\bD\bv)-\langle\bF(\bD\bv)\rangle_{S_K}|^2\, d\bx
        \\
        &\leq c_\delta\,\|\bF(\bD\bw)-\bF(\bD\bv)\|^2_{2,S_K} +c\,\delta\, h^2
        \int\limits\limits_{S_K}|\nabla\bF(\bD\bv)|^2\, d\bx.
    \end{align*}
    This yields the assertion.
  \end{proof}
\section*{Acknowledgments}
The research of Luigi C. Berselli that led to the present paper was partially supported
by a grant of the group GNAMPA of INdAM and by the project of the University of Pisa
within the grant PRA$\_{}2018\_{}52$~UNIPI \textit{Energy and regularity: New techniques
  for classical PDE problems.} 

\def\cprime{$'$} \def\cprime{$'$} \def\cprime{$'$}

\end{document}